\newcounter{hypA}
\def\PE{\mathbb{E}}
\def\rmi{\mathrm{i}}
\def\rme{\mathrm{e}}
\def\rset{\mathbb{R}}
\def \1{\mathbbm{1}}
\DeclareMathOperator{\argmin}{arg\,min}
\DeclareMathOperator{\argmax}{arg\,max}
\DeclareMathOperator*{\med}{med}
\newtheorem{theo}{Theorem}
\newtheorem{prop}[theo]{Proposition}
\newtheorem{lemma}[theo]{Lemma}
\theoremstyle{remark}
\newtheorem{remark}{Remark}
\begin{document}

\begin{frontmatter}

\title{A robust approach for estimating change-points in the mean of an AR(1) process}
\runtitle{Change-points in the mean of an AR(1) process}

\begin{aug}
  \author{\fnms{S.}  \snm{Chakar}\thanksref{a,b,e1}\ead[label=e1,mark]{souhil.chakar@agroparistech.fr}} \thankstext{t2}{Corresponding author},
  \author{\fnms{E.} \snm{Lebarbier}\thanksref{a,b,e2}\ead[label=e2,mark]{emilie.lebarbier@agroparistech.fr}},
  \author{\fnms{C.}  \snm{L\'evy-Leduc}\thanksref{a,b,e3}%
    \ead[label=e3,mark]{celine.levy-leduc@agroparistech.fr}%
}
    \and
  \author{\fnms{S.} \snm{Robin}\thanksref{a,b,e4}\ead[label=e4,mark]{stephane.robin@agroparistech.fr}},

  \runauthor{S. Chakar et al.}


  \address[a]{AgroParisTech, UMR 518 MIA, F-75005 Paris, France.}

  \address[b]{INRA, UMR 518 MIA, F-75005 Paris, France. \printead{e1,e2} \printead{e3,e4}}

\end{aug}

\begin{abstract}
We consider the problem of multiple change-point estimation in the mean of a Gaussian AR(1) process.  Taking into account the dependence structure does not allow us to use the dynamic programming algorithm, which is the only algorithm giving the optimal solution in the independent case. We propose a robust estimator of the autocorrelation parameter, which is consistent and satisfies a central limit theorem. Then, we propose to follow the classical inference approach, by plugging this estimator in the criteria used for change-points estimation. We show that the asymptotic properties of these estimators are the same as those of the classical estimators in the independent framework. The same plug-in approach is then used to approximate the modified BIC and choose the number of segments. This method is implemented in the R package \textsf{AR1seg} and is available from the Comprehensive R Archive Network (CRAN). This package is used in the simulation section in which we show that for finite sample sizes taking into account the dependence structure improves the statistical performance of the change-point estimators and of the selection criterion.
\end{abstract}

\begin{keyword@MSC}
62M10, 62F12, 62F35.
\end{keyword@MSC}

\begin{keyword@KWD}
Auto-regressive model, change-points, robust estimation of the AR(1) parameter, time series, model selection.
\end{keyword@KWD}



\end{frontmatter}

\section{Introduction}




Change-point detection problems arise in many fields, such as
genomics (\cite{braun1998statistical}, \cite{braun2000multiple},
\cite{picard2005statistical}), medical imaging
\cite[]{lavielle2005using}, earth sciences (\cite{Williams:2003},
\cite{Gazeaux:2013}), econometrics (\cite{lai2005autoregressive},
\cite{lavielle}) or climate (\cite{mestre2000methodes},
\cite{climat}). In many of these problems, the observations can not
be assumed to be independent. Indeed the autocovariance structure of
the time series display more complex patterns and might be taken
into account in change-point estimation.


An abundant literature exists about the statistical theory of
change-point detection. Only speaking about Gaussian processes,
various frameworks have been considered ranging from the independent
case with changes in the mean \cite[]{Bas93N}, to more complex
structural changes \cite[]{BP}, dependent processes
\cite[]{lavielle} or processes with changes in all parameters
\cite[]{BKW10}. \\
 \cite{lavielle} and \cite{LM} proved that, if the
number of changes is known, the least-squares estimators of the
change-point locations and of the parameters of each segment are
consistent under very mild conditions on the auto-covariance
structure of the process with changes in the mean. A
quasi-likelihood approach is also proved to provide consistent
estimates for the model with changes in all
parameters by \cite{BKW10}. 
 Many model selection criteria have also
been proposed to estimate the number of changes, mostly in the
independent case (see for example \cite{yao},
\cite{lavielle2005using}, \cite{lebarbier2005detecting} and
\cite{zs}).

Change-point detection also raises algorithmic issues as the determination of the optimal set of change-point locations is a discrete optimization problem. The dynamic programming algorithm introduced by \cite{AugerLawrence} is the only way to recover this optimal segmentation. The computational complexity of this algorithms is quadratic relatively to the length of the series. Only this algorithm and some of its improvements (such as these proposed by \cite{pruned} or \cite{KFE12-JASA}) provide exactly the optimal change-point location estimators. \\
However, the dynamic programming algorithm only applies when ($i$) the loss function (e.g. the negative log-likelihood) is additive with respect to the segments and when ($ii$) no parameter to be estimated is common to several segments. These requirements are met by the least-square criterion (which corresponds to the negative log-likelihood in the Gaussian homoscedastic independent model with changes in the mean) or by the model and criterion considered by 
\cite{BKW10}. In other cases, iterative and stochastic procedures are needed (see \cite{BPalgo} or \cite{LiL12}).


In this paper, we consider the segmentation of an AR(1) process with homogeneous auto-correlation coefficient $\rho^{\star}$:
\begin{equation}\label{eq:modele_new}
y_i=\mu_k^{\star}+\eta_i\;,\;
t_{n,k}^{\star}+1\leq i\leq t_{n,k+1}^{\star}\;,\; 0\leq k\leq m^{\star}\;,\; 1\leq
i\leq n\;,
\end{equation}
where $(\eta_i)_{i\in \mathbb{Z}}$ is a zero-mean stationary AR(1) Gaussian process defined as the solution of
\begin{equation}\label{eq:ar1}
\eta_i = \rho^{\star}\eta_{i-1} + \epsilon_i\;,
\end{equation}
where $|\rho^{\star}|<1$ and the $\epsilon_i$'s are i.i.d. zero-mean Gaussian random variables with variance $\sigma^{\star2}$. We further also assume that $y_0$ is a Gaussian random variable with mean $\mu_0^{\star}$ and variance $\sigma^{\star2}/(1-{\rho^{\star}}^2)$.
Actually, most of the results we provide in this paper hold without the Gaussian assumption.

Note that this model is different from the ones considered by \cite{davis2006structural} and \cite{BKW10}. Indeed, \cite{davis2006structural} considered the segmentation issue of a non-stationary time series which consists of blocks of different autoregressive processes where all the parameters of the autoregressive processes change from one segment to the other. \cite{BKW10} proposed a methodology for estimating the change-points of a non-stationary time series built from a general class of models having piecewise constant parameters. In this framework, all the parameters may change jointly at each change-point. This differs from our model \eqref{eq:modele_new} where the parameters $\rho^{\star}$ and $\sigma^{\star}$ are not assumed to change from one segment to the other.
The direct maximum-likelihood inference for such a process violates both requirements ($i$) and ($ii$). Indeed the log-likelihood is not additive with respect to the segments because of the dependence that exists between data from neighbor segments and the unknown coefficient $\rho^{\star}$ needs to be estimated jointly over all segments. \\
Our aim is to propose a methodology for estimating both the change-point locations $\boldsymbol{t_n^{\star}} = (t_{n,k}^{\star})_{1\leq k\leq m^{\star}}$ and the means $\boldsymbol{\mu}^{\star}=(\mu_k^{\star})_{0\leq k\leq m^{\star}}$, accounting for the existence of the auto-correlation $\rho^{\star}$.

In the sequel, we shall use the following conventions: $t_{n,0}^{\star}=0,t_{n,m^{\star}+1}^{\star}=n$ and assume that there exists $\boldsymbol{\tau^{\star}}=(\tau_k^{\star})_{0\leq k\leq m+1}$ such that, for $0\leq k\leq m+1$ $t_{n,k}^{\star}=\lfloor n\tau_k^{\star}\rfloor$, $\lfloor x\rfloor$ denoting the integer part of $x$. Consequently, $\tau_0^{\star}=0$ and $\tau_{m^{\star}+1}^{\star}=1$.

If $\rho^{\star}$ was known, the series could be decorrelated and the dynamic programming algorithm then used for the segmentation of this decorrelated series. Here, $\rho^{\star}$ is unknown, but is estimated, and this estimator is then used to decorrelate the series.
To this aim, we borrow techniques from robust estimation {\cite[]{MG}}. Briefly speaking, we consider the data observed at the change-point locations as outliers and propose an estimate of $\rho^{\star}$ that is robust to the presence of such outliers. We shall prove that the estimate we propose is consistent and satisfies a central limit theorem.

We shall prove that the resulting change-point estimators satisfy the same asymptotic properties as those proposed by \cite{LM} and \cite{BKW10}. 
Finally, we propose a model selection criterion inspired by the one proposed in \cite{zs} and prove some asymptotic properties of this criterion.

This method is implemented in the R package \textsf{AR1seg} and is available from the Comprehensive R Archive Network (CRAN).

This paper is organized as follows. In Section \ref{sec:correlation}, we propose a robust estimator for $\rho^{\star}$ and establish its asymptotic properties. In Section \ref{sec:change-points}, we prove that the change-point estimators defined in \eqref{eq:tau_n} are consistent in both the Gaussian and the non-Gaussian case. In Section \ref{sec:selection}, we provide a consistent model selection criterion in the non-Gaussian case and derive an approximation of a Gaussian criterion. In Section \ref{sec:simul}, we illustrate by a simulation study the performance of this approach for time series having a finite sample size.

\section{Robust estimation of the parameter $\rho^{\star}$}\label{sec:correlation}

The aim of this section is to provide an estimator of $\rho^{\star}$ which can deal with the
presence of change-points in the data. In the absence of change-points ($m^{\star}=0$ in (\ref{eq:modele_new})), a consistent estimator
of $\rho^{\star}$ could be obtained by using the classical autocorrelation function estimator of $(y_i)_{0\leq i\leq n}$ computed at lag 1. 
Since change-points can be seen as outliers
in the AR(1) process, we shall propose a robust approach
for estimating $\rho^{\star}$. 
\cite{MG} propose a robust estimator of the autocorrelation function of a stationary time series 
based on the robust scale estimator proposed by \cite{CR}.
More precisely, the approach of \cite{MG} would result in the following estimate of $\rho^{\star}$:
\begin{equation*}
\widehat{\rho}_{\textrm{MG}}=\dfrac{Q_{n}^2 \left(y^+\right) - Q_{n}^2 \left(y^-\right)}{Q_{n}^2 \left(y^+\right) + Q_{n}^2 \left(y^-\right)}\;,
\end{equation*}
where $y^+ = (y_{i+1}+y_i)_{0\leq i\leq n-1}$, $y^- = (y_{i+1}-y_i)_{0\leq i\leq n-1}$ 
and $Q_n$ is the scale estimator of \cite{CR} which is such that $Q_n \left(x\right)$
is proportional to the first quartile of
\begin{equation*}
\left\lbrace \vert x_i - x_j \vert ; 0\leq i<j\leq n \right\rbrace \; .
\end{equation*}

The asymptotic properties of this estimator are studied in \cite{levy2011robust} for Gaussian stationary processes with either short-range or long-range dependence.
However, as we shall see in the simulation section we can provide an estimator of $\rho^{\star}$ which is more
robust to the presence of change-points than $\widehat{\rho}_{\textrm{MG}}$. The asymptotic properties of 
this novel robust estimator are given in Proposition \ref{prop:correlation}.

\begin{prop}\label{prop:correlation}
Let $y_0,\dots,y_n$ be $(n+1)$ observations satisfying (\ref{eq:modele_new}) 
and let
\begin{equation}\label{eq:est_rho}
\widetilde{\rho}_n = \frac{\left(\med\limits_{0\leq i\leq n-2} \left|y_{i+2}-y_i\right|\right)^2}
{\left(\med\limits_{0\leq i\leq n-1} \left|y_{i+1}-y_i\right|\right)^2}-1 \;,
\end{equation}
where $\med x_i$ denotes the median. Then, $\widetilde{\rho}_n$ satisfies
the following Central Limit Theorem
\begin{equation}\label{eq:tcl_rho_tilde}
\sqrt{n}(\widetilde{\rho}_n - \rho^{\star})\stackrel{d}{\longrightarrow}\mathcal{N}(0,\tilde{\sigma}^2)\;, \textrm{ as } n\to\infty\;,
\end{equation}
where
\begin{equation*}
\tilde{\sigma}^2=\PE[\Psi(\eta_0,\eta_1,\eta_2)^2]
+2\sum_{k\geq 1}\PE\left[\Psi(\eta_0,\eta_1,\eta_2)\Psi(\eta_k,\eta_{k+1},\eta_{k+2})\right]\;,
\end{equation*}
and the function $\Psi$ is defined by
\begin{multline}\label{eq:def_Psi}
\Psi : (x_0,x_1,x_2)\mapsto\\ -\frac{2\sigma^{\star 2}\Phi^{-1}(3/4)}{\varphi\left(\Phi^{-1}(3/4)\right)}
\left[\1_{\left\{|x_2-x_0|\leq \sqrt{2\sigma^{\star 2}}\Phi^{-1}(3/4)\right\}} - \1_{\left\{|x_1-x_0|\leq \sqrt{\frac{2\sigma^{\star 2}}{1+\rho^{\star}}}\Phi^{-1}(3/4)\right\}}\right]\;,
\end{multline}
where $\Phi$ and $\varphi$ denote the cumulative distribution function and the probability distribution function
of a standard Gaussian random variable, respectively.
\end{prop}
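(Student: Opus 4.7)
The plan is to reduce to the stationary case $y_i = \mu_0^\star + \eta_i$, then combine a Bahadur-type representation of the two sample medians with a central limit theorem for geometrically $\beta$-mixing sequences, and finally apply the delta method to the smooth map $h(x,y) = x^2/y^2 - 1$.

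\textbf{Reduction to the stationary case.} Among the $n-1$ increments $y_{i+1}-y_i$ and the $n-2$ increments $y_{i+2}-y_i$, only at most $m^\star$ and $2m^\star$, respectively, straddle a true change-point and thus differ from $\eta_{i+1}-\eta_i$ or $\eta_{i+2}-\eta_i$. Since $m^\star$ is fixed and the sample median is insensitive to an $O(1)$ perturbation of its argument when the underlying density is strictly positive at the population median, both medians in \eqref{eq:est_rho} share the same $\sqrt{n}$-asymptotics as $\widehat{M}_2 = \med_{0\le i\le n-2}|\eta_{i+2}-\eta_i|$ and $\widehat{M}_1 = \med_{0\le i\le n-1}|\eta_{i+1}-\eta_i|$ computed from the stationary innovation process. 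Since $\eta$ is a Gaussian AR(1), it is geometrically $\beta$-mixing, and so is any measurable function of finitely many consecutive $\eta_i$'s. A direct computation gives $\Var(\eta_{i+2}-\eta_i) = 2\sigma^{\star 2}$ and $\Var(\eta_{i+1}-\eta_i) = 2\sigma^{\star 2}/(1+\rho^\star)$, hence the population medians $q_U = \sqrt{2\sigma^{\star 2}}\,\Phi^{-1}(3/4)$ and $q_V = \sqrt{2\sigma^{\star 2}/(1+\rho^\star)}\,\Phi^{-1}(3/4)$, which satisfy $h(q_U,q_V) = \rho^\star$ and match the indicator thresholds in \eqref{eq:def_Psi}.

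\textbf{Bahadur representation (main obstacle).} The delicate part of the proof is to establish, for $j = 1, 2$, the linearization
$$\sqrt{n}\,(\widehat{M}_j - q_j) = -\frac{1}{f_j(q_j)}\cdot\frac{1}{\sqrt{n}}\sum_{i} \bigl(\1_{|\eta_{i+j}-\eta_i|\le q_j} - 1/2\bigr) + o_P(1),$$
where $f_j$ is the density of $|\eta_{i+j}-\eta_i|$ (an explicit Gaussian density that is smooth and strictly positive at $q_j$). The standard strategy is to bound the modulus of continuity of the normalized empirical process of $\{|\eta_{i+j}-\eta_i|\}$ on an $O(n^{-1/2}\log n)$ neighborhood of $q_j$ via maximal or chaining inequalities for stationary $\beta$-mixing sequences; the smoothness of the Gaussian density and the geometric mixing rate make the standard regularity conditions automatic. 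It is convenient at this stage to obtain a joint Bahadur representation for the pair $(\widehat{M}_1, \widehat{M}_2)$ by applying the same scheme to the bivariate empirical process of $(|\eta_{i+1}-\eta_i|, |\eta_{i+2}-\eta_i|)$.

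\textbf{Delta method and CLT.} Substituting the two Bahadur representations into the first-order expansion of $h$ at $(q_U, q_V)$ yields
$$\sqrt{n}\,(\widetilde{\rho}_n - \rho^\star) = \frac{1}{\sqrt{n}}\sum_{i}\Psi(\eta_i, \eta_{i+1}, \eta_{i+2}) + o_P(1),$$
where the common prefactor in \eqref{eq:def_Psi} comes from $-\partial_x h(q_U,q_V)/f_U(q_U)$ and $-\partial_y h(q_U,q_V)/f_V(q_V)$; thanks to the relation $q_U^2/q_V^2 = 1 + \rho^\star$ these two coefficients are opposite of one another, which is why $\Psi$ in \eqref{eq:def_Psi} is a single bracket multiplied by one constant. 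Since $\PE[\1_{|\eta_{i+j}-\eta_i|\le q_j}] = 1/2$ by definition of the medians, $\PE[\Psi(\eta_0, \eta_1, \eta_2)] = 0$, and $\{\Psi(\eta_i, \eta_{i+1}, \eta_{i+2})\}$ is a bounded, stationary, geometrically $\beta$-mixing sequence whose covariance series converges absolutely. A classical CLT for strongly mixing sequences then delivers \eqref{eq:tcl_rho_tilde}, with $\tilde\sigma^2$ equal to the long-run variance displayed in the proposition.
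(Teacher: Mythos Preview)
Your proposal is correct and follows the same overall skeleton as the paper's proof: reduce to the stationary process $(\eta_i)$ by discarding the $O(1)$ contaminated increments, linearize the two sample medians, combine via the delta method into a normalized sum of $\Psi(\eta_i,\eta_{i+1},\eta_{i+2})$, and apply a CLT for dependent data. The paper differs from you in two technical choices. For the linearization step, rather than bounding the modulus of continuity via $\beta$-mixing maximal inequalities, the paper invokes weak convergence of the empirical process of $\{|\eta_{i+j}-\eta_i|\}$ (Cs\"org\H{o}--Mielniczuk) together with Hadamard differentiability of the quantile functional and the functional delta method (van der Vaart, Lemma~21.3 and Theorem~20.8). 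For the final CLT, instead of a generic strong-mixing CLT, the paper exploits Gaussianity directly via Arcones' (1994) limit theorem for functionals of stationary Gaussian sequences with Hermite rank at least one. Your route is slightly more hands-on but self-contained and would extend more readily to non-Gaussian innovations; the paper's route avoids the modulus-of-continuity bookkeeping by delegating it to off-the-shelf empirical-process and Gaussian-subordination results.
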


The proof of Proposition \ref{prop:correlation} is given in Appendix.

\begin{remark}
Let us now compare the properties of $\widetilde{\rho}_n$ with the properties of $\widehat{\rho}_n(1)$ where $\widehat{\rho}_n(\cdot)$ denotes the classical estimator of the autocorrelation function computed from $y_0,\dots,y_{n}$ defined in \eqref{eq:modele_new} with $m^{\star}=0$. By \cite[Theorem 7.2.1 and Example 7.2.3]{brockwell}, we get that
$$
\sqrt{n}(\widehat{\rho}_n(1)-\rho^{\star})\stackrel{d}{\longrightarrow}\mathcal{N}\left(0,1-{\rho^{\star}}^2\right)\;,\textrm{ as }
n\to\infty\;.$$
From this result, we can see that $\widetilde{\rho}_n$ converges to $\rho^{\star}$ at the same rate as $\widehat{\rho}_n(1)$
except that our result still holds when $m\neq 0$.
\end{remark}

\begin{remark}
{Note that the asymptotic distribution given in (\ref{eq:tcl_rho_tilde}) allows to define a test of $(H_0):$ `$\rho^{\star}=0$' as the asymptotic variance $\widetilde{\sigma}^2$ does not depend on any unknown parameter under $H_0$.}
\end{remark}

\begin{remark}
Since the estimator (\ref{eq:est_rho}) involves differences of the process $(y_i)$ at different instants, 
it can only be used in the case of stable distributions as defined in \cite{feller:1971}. 
Among them, we can quote the Cauchy, L\'evy and Gaussian distributions, where the Gaussian distribution is the only one
to have a finite second order moment. We give some hints in Appendix \ref{subsec:hints} to explain why, in the case of the
Cauchy distribution, taking $\widetilde{\widetilde{\rho}}_n$ defined as follows leads to an accurate estimator of $\rho^\star$:
\begin{equation}\label{eq:rho_cauchy}
\widetilde{\widetilde{\rho}}_n=
\left\{
\begin{array}{cc}
-1+\sqrt{1+\widetilde{\rho}_n},&\textrm{if } \widetilde{\rho}_n\geq 0\;,\\
-\sqrt{1-\sqrt{1+\widetilde{\rho}_n}},&\textrm{if } \widetilde{\rho}_n<0\;,
\end{array}
\right.
\end{equation}
where $\widetilde{\rho}_n$ is defined by (\ref{eq:est_rho}).
Some simulations are also provided in Section \ref{sec:add_simul} to illustrate the finite sample size 
properties of this estimator.
\end{remark}


\section{Change-points and expectations estimation}\label{sec:change-points}

In this section, the number of change-points $m^{\star}$ is assumed to be known. In the sequel, for notational simplicity, $m^{\star}$ will be denoted by $m$. 
Our goal is to estimate both the change-points and the means in model \eqref{eq:modele_new}. A first idea consists in using the following criterion which is based on a quasi-likelihood conditioned on $y_0$ and to minimize it with respect to $\rho$:
\begin{multline*}
\sum_{k=0}^{m} \sum_{i=t_{k}+2}^{t_{k+1}} \left(y_i- \rho y_{i-1}-\delta_k\right)^2 + \sum_{k=1}^m  \left\{\left(y_{t_k +1}-\frac{\delta_k}{1-\rho}\right)- \rho \left(y_{t_k}-\frac{\delta_{k-1}}{1-\rho}\right)\right\}^2\\
 + \left(y_1- \rho y_{0}-\delta_0\right)^2 .
\end{multline*}
Due to the term that involves both $\delta_{k-1}$ and $\delta_k$, this criterion cannot be efficiently minimized. Therefore, we propose to use an alternative criterion defined as follows:
\begin{equation}\label{eq:SSm_def}
SS_m\left(y,\rho, \boldsymbol{\delta},\boldsymbol{t}\right) = \sum_{k=0}^m \sum_{i=t_k +1}^{t_{k+1}}
\left(y_i-\rho y_{i-1}- \delta_k\right)^2\;.
\end{equation}
Note that $SS_m\left(z,\rho, \left(1-\rho\right) \boldsymbol{\mu},\boldsymbol{t}\right)$ corresponds to $-n/2$ times the log-likelihood of the following model maximized with respect to $\sigma$
%
%
\begin{eqnarray}\label{eq:bkw}
z_i - \mu_k^{\star} & = & \rho^{\star} \left(z_{i-1} - \mu_k^{\star}\right) +\epsilon_i\;,\; t_{n,k}^{\star}+1\leq i\leq t_{n,k+1}^{\star}\;,\; 0\leq k\leq m\;,\; 1\leq i\leq n\;,
\end{eqnarray}
and where $z_0$ is a Gaussian random variable with mean $\mu_0^{\star}$ and variance $\sigma^{ \star2}/(1-{\rho^{\star}}^2)$.
In this model, {which is a subset of a model belonging to the class considered in \cite{BKW10},} the expectation changes are not abrupt anymore as in model \eqref{eq:modele_new}.

The parameter $\rho$, involved in each term of \eqref{eq:SSm_def}, is still a problem in order to minimize $SS_m$ wrt $\rho, \boldsymbol{\delta}$ and $\boldsymbol{t}$. This minimization problem is a complex discrete and global optimization problem. Dynamic Programming \cite{AugerLawrence} cannot be used in this case. Only iterative methods are suitable to this minimization problem, without any guarantee to converge to the global minimum.

However, if $\rho$ is replaced by an estimator $\overline{\rho}_n$, $SS_m (y,\overline{\rho}_n,\boldsymbol{\delta},\boldsymbol{t})$ can be minimized wrt $\boldsymbol{\delta}$ and $\boldsymbol{t}$ by Dynamic Programming. Proposition \ref{Prop:Segment2} gives asymptotic results for the estimators resulting from this method.

\begin{prop}\label{Prop:Segment}
Let $ z = \left(z_0 ,\dots , z_n \right) $ be a finite sequence of real-valued random variables satisfying \eqref{eq:bkw} and $\left(\overline{\rho}_n\right)$ a sequence of real-valued random variables. Let $\boldsymbol{\widehat{\delta}}_n (z , \overline{\rho}_n)$ and $\boldsymbol{\widehat{t}}_n(z, \overline{\rho}_n)$ be defined by 
\begin{eqnarray}
\left(\boldsymbol{\widehat{\delta}}_n(z, \overline{\rho}_n), \boldsymbol{\widehat{t}}_n(z, \overline{\rho}_n)\right) & = & \underset{\left(\boldsymbol{\delta},\boldsymbol{t}\right)\in \mathbb{R}^{m+1}\times \mathcal{A}_{n,m}}{\argmin} SS_m\left(z,\overline{\rho}_n , \boldsymbol{\delta},\boldsymbol{t}\right) \ \textit{,}\label{eq:delta_n,t_n}\\
\boldsymbol{\widehat{\tau}}_n(z, \overline{\rho}_n) & = & \frac{1}{n} \boldsymbol{\widehat{t}}_n(z, \overline{\rho}_n),\label{eq:tau_n}
\end{eqnarray}
\noindent where
\begin{equation}\label{eq:Anm}
\mathcal{A}_{n,m} = \\
\left\lbrace \left(t_0,\dots ,t_{m+1}\right);t_0=0<\dots <t_{m+1}=n, \forall k=1,\dots ,m+1, t_k-t_{k-1}\geq\Delta_n \right\rbrace
\end{equation}
and where $\left(\Delta_n\right)$ is a real sequence such that
{$n^{-1}\Delta_n \underset{n\to\infty}{\longrightarrow} 0$ and $n^{-\alpha}\Delta_n \underset{n\to\infty}{\longrightarrow} +\infty$ with $\alpha>0$.}
Assume that
\begin{equation}\label{eq:hypRhoRate}
\left(\overline{\rho}_n-\rho^{\star}\right) = O_P\left(n^{-1/2}\right),
\end{equation}
as $n$ tends to infinity. 
Then, 
\begin{equation*}
\| \boldsymbol{\widehat{\tau}}_n(z, \overline{\rho}_n) - \boldsymbol{\tau^{\star}} \| = O_P\left(n^{-1}\right), \qquad \| \boldsymbol{\widehat{\delta}}_n(z, \overline{\rho}_n) - \boldsymbol{\delta^{\star}} \|= O_P\left(n^{-1/2}\right),
\end{equation*}
where $\|\cdot\|$ is the Euclidian norm.\\
The results still hold if the $\epsilon_i$'s are only assumed to be centered and to have a finite second order moment.
\end{prop}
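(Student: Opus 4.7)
The plan is to reduce the plug-in criterion to a classical change-point-in-mean problem on an auxiliary series, then show that replacing $\rho^{\star}$ by $\overline{\rho}_n$ contributes only a negligible perturbation. Set $u_i := z_i - \rho^{\star} z_{i-1}$ and $\delta_k^{\star} := (1-\rho^{\star})\mu_k^{\star}$; rearranging \eqref{eq:bkw} gives $u_i = \delta_k^{\star} + \epsilon_i$ for $t_{n,k}^{\star}+1 \le i \le t_{n,k+1}^{\star}$, so $(u_i)$ is a piecewise-constant signal corrupted by i.i.d., centered, square-integrable noise. Writing $D_n := \rho^{\star} - \overline{\rho}_n$, the identity $z_i - \overline{\rho}_n z_{i-1} - \delta_k = (u_i - \delta_k) + D_n z_{i-1}$ and profiling $\boldsymbol{\delta}$ out for fixed $\boldsymbol{t}$ (the minimizer is $\widehat{\delta}_k(\boldsymbol{t}) = \overline{u}_k(\boldsymbol{t}) + D_n \overline{z}_{k,-1}(\boldsymbol{t})$, with $\overline{u}_k$, $\overline{z}_{k,-1}$ the segmental sample means of $u_i$, $z_{i-1}$) yield
\begin{equation*}
V_n(\boldsymbol{t}) := \min_{\boldsymbol{\delta}} SS_m(z,\overline{\rho}_n,\boldsymbol{\delta},\boldsymbol{t}) = V_n^{(0)}(\boldsymbol{t}) + 2D_n R_n(\boldsymbol{t}) + D_n^2 Q_n(\boldsymbol{t}),
\end{equation*}
where $V_n^{(0)}(\boldsymbol{t}) = \sum_k \sum_{i=t_k+1}^{t_{k+1}}(u_i - \overline{u}_k)^2$ is the classical least-squares segmentation criterion on $(u_i)$, and $R_n$, $Q_n$ are the within-segment covariance of $(u_i, z_{i-1})$ and the pooled within-segment sum of squares of $(z_{i-1})$.

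The classical criterion $V_n^{(0)}$ is covered by \cite{yao} and \cite{LM}. Applied to the i.i.d.-noise piecewise-constant signal $(u_i)$, their analysis delivers, for $\boldsymbol{t} \in \mathcal{A}_{n,m}$ with $\|\boldsymbol{t}-\boldsymbol{t}^{\star}\|_\infty \ge K$, the Bai-type quantitative lower bound
\begin{equation*}
V_n^{(0)}(\boldsymbol{t}) - V_n^{(0)}(\boldsymbol{t}^{\star}) \ge c\,K - O_P(\sqrt{K}),
\end{equation*}
with $c>0$ determined by the true jumps $\delta_{k+1}^{\star}-\delta_k^{\star}$; the uniform maximal inequality underlying this bound is valid because of the minimum-segment-length condition $\Delta_n \ge n^\alpha$. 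The plug-in perturbation $P_n(\boldsymbol{t}) := 2D_n R_n(\boldsymbol{t}) + D_n^2 Q_n(\boldsymbol{t})$ must be controlled locally: for $\|\boldsymbol{t}-\boldsymbol{t}^{\star}\|_\infty \le K$, each segmental sum in $R_n$ and $Q_n$ differs from its $\boldsymbol{t}^{\star}$ counterpart by only $O(K)$ summands, so by stationarity of $(\eta_i)$ with finite second moment together with $|D_n| = O_P(n^{-1/2})$, a direct expansion gives $|P_n(\boldsymbol{t}) - P_n(\boldsymbol{t}^{\star})| = o_P(K)$ uniformly. Combined with the optimality of $\widehat{\boldsymbol{t}}_n$ for $V_n$, this yields $cK - O_P(\sqrt{K}) \le o_P(K)$ on $\{\|\widehat{\boldsymbol{t}}_n - \boldsymbol{t}^{\star}\|_\infty \ge K\}$, hence $\sqrt{K} = O_P(1)$, and so $\|\widehat{\boldsymbol{\tau}}_n - \boldsymbol{\tau}^{\star}\| = O_P(n^{-1})$.

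The rate for $\widehat{\boldsymbol{\delta}}_n$ then follows from the explicit profile formula: on each estimated segment, which differs from the true one by only $O_P(1)$ indices, $\overline{u}_k(\widehat{\boldsymbol{t}}_n) = \delta_k^{\star} + O_P(n^{-1/2})$ by the central limit theorem on a segment of length of order $n$, the bias from the $O_P(1)$ misallocated points is $O_P(n^{-1})$, and $D_n \overline{z}_{k,-1}(\widehat{\boldsymbol{t}}_n) = O_P(n^{-1/2})$. The main obstacle is the quantitative lower bound for $V_n^{(0)}$ together with the uniform local control of $P_n$; both are essentially Bai-type arguments already employed in \cite{LM} in the unperturbed setting, and they transfer to the plug-in case through the local expansion above. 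Gaussianity of $\epsilon_i$ plays no role beyond the existence of a finite second moment, so the final non-Gaussian assertion of the proposition also follows.
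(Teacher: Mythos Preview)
Your high-level strategy---decompose the plug-in criterion into the ``ideal'' least-squares criterion on $u_i=z_i-\rho^\star z_{i-1}$ plus a perturbation driven by $D_n=\rho^\star-\overline{\rho}_n$, then invoke Lavielle--Moulines for the ideal part---is the same as the paper's. The organisation of the perturbation analysis differs, and that difference is where your argument has a gap.

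The gap is the missing consistency step. Your Bai-type inequality $V_n^{(0)}(\boldsymbol t)-V_n^{(0)}(\boldsymbol t^\star)\ge cK-O_P(\sqrt K)$ with $K=\|\boldsymbol t-\boldsymbol t^\star\|_\infty$ is a \emph{local} statement: it holds on shells $\{K\le\|\boldsymbol t-\boldsymbol t^\star\|_\infty\le \gamma n\Delta_{\boldsymbol\tau^\star}\}$ for small $\gamma$, but not on all of $\mathcal A_{n,m}$ (once $\|\boldsymbol t-\boldsymbol t^\star\|_\infty$ exceeds a fixed fraction of $n$ the signal term saturates at order $n$, and the LM noise control uses $\Delta_n$ rather than $K$). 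You therefore need $\|\widehat{\boldsymbol\tau}_n-\boldsymbol\tau^\star\|_\infty=o_P(1)$ before the local rate argument applies, and you do not establish it. A related imprecision: your perturbation bound is stated for $\|\boldsymbol t-\boldsymbol t^\star\|_\infty\le K$ while the conclusion is drawn on $\{\|\widehat{\boldsymbol t}_n-\boldsymbol t^\star\|_\infty\ge K\}$; what the argument actually requires is the uniform ratio bound $\sup_{\|\boldsymbol t-\boldsymbol t^\star\|_\infty\le\gamma n}|P_n(\boldsymbol t)-P_n(\boldsymbol t^\star)|/(\|\boldsymbol t-\boldsymbol t^\star\|_\infty\vee 1)=o_P(1)$, whose uniformity in both $\boldsymbol t$ and the scale you do not justify.

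The paper sidesteps these issues by proving a \emph{global} perturbation bound rather than a local one. Writing $\overline w=z-\overline\rho_n Bz$, $w^\star=z-\rho^\star Bz$ and $J_n(x,\boldsymbol t)=n^{-1}(\|\pi_{E_{\boldsymbol t^\star_n}}x\|^2-\|\pi_{E_{\boldsymbol t}}x\|^2)$, Cauchy--Schwarz together with the $1$-Lipschitz property of orthogonal projections gives
\[
\sup_{\boldsymbol t\in\mathcal A_{n,m}}\bigl|J_n(\overline w,\boldsymbol t)-J_n(w^\star,\boldsymbol t)\bigr|\le \tfrac{2}{n}\,|\rho^\star-\overline\rho_n|\,\|Bz\|\bigl(|\rho^\star+\overline\rho_n|\,\|Bz\|+2\|z\|\bigr)=O_P(n^{-1/2}),
\]
since $\|z\|,\|Bz\|=O_P(n^{1/2})$. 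This single uniform estimate then plugs into LM's consistency argument (the paper's Lemma~10) and, separately, into LM's localized rate argument on the sets $\mathcal C'_{\nu,\gamma,n,m}(\mathcal I)$ (Lemmas~11--13), without any need to track how the perturbation scales with $\|\boldsymbol t-\boldsymbol t^\star\|$. Your profiled decomposition $V_n=V_n^{(0)}+2D_nR_n+D_n^2Q_n$ is the same object viewed coordinate-wise; the projection formulation just makes the uniformity in $\boldsymbol t$ immediate. Your treatment of $\widehat{\boldsymbol\delta}_n$ via the explicit segmental means is correct and coincides with the paper's.
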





\begin{prop}\label{Prop:Segment2}
The results of Proposition \ref{Prop:Segment} still hold under the same assumptions 
when $z$ is replaced with $y$
satisfying \eqref{eq:modele_new}.\\
The results still hold if the $\epsilon_i$'s are only assumed to be centered and to have a finite second order moment.
\end{prop}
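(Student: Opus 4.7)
The plan is to reduce Proposition \ref{Prop:Segment2} to Proposition \ref{Prop:Segment} via a coupling argument. Given $y$ satisfying \eqref{eq:modele_new}, I introduce the auxiliary process $z$ satisfying \eqref{eq:bkw}, driven by the same innovations $(\epsilon_i)$ and initialized at $z_0 = y_0$. Writing $d_i = y_i - z_i$, a direct comparison of the two recursions gives $d_0 = 0$, $d_i = \rho^{\star} d_{i-1}$ at any index $i$ strictly inside a true segment, and the deterministic shock $d_{t_{n,k}^{\star}+1} = \rho^{\star} d_{t_{n,k}^{\star}} + \rho^{\star}(\mu_k^{\star} - \mu_{k-1}^{\star})$ at each true change-point $k = 1, \ldots, m$. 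Since $|\rho^{\star}|<1$ and $m$ is fixed, each shock decays geometrically and therefore $\sup_i |d_i|$ is bounded by a deterministic constant depending only on $\rho^{\star}$ and $(\mu_k^{\star})_k$; in particular $\sum_{i=1}^{n} d_i^{2} = O(1)$ uniformly in $n$.

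Next, I compare the two quasi-likelihood criteria. Setting $e_i = d_i - \overline{\rho}_n d_{i-1}$ and expanding the square,
\[
SS_m(y, \overline{\rho}_n, \boldsymbol{\delta}, \boldsymbol{t}) - SS_m(z, \overline{\rho}_n, \boldsymbol{\delta}, \boldsymbol{t}) = \sum_{i=1}^n e_i^2 + 2 \sum_{i=1}^n e_i \bigl( z_i - \overline{\rho}_n z_{i-1} - \delta_{k(i)} \bigr),
\]
where $k(i)$ is the segment of $i$ under the candidate $\boldsymbol{t}$. At interior indices $e_i = (\rho^{\star}-\overline{\rho}_n) d_{i-1}$, which by \eqref{eq:hypRhoRate} is of order $O_P(n^{-1/2}) \cdot |\rho^{\star}|^{\mathrm{dist}(i)}$, while at the $m$ boundary indices $e_i = O(1)$. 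Summing yields $\sum_i e_i^2 = O_P(1)$. For the cross term, $\sum_i e_i \epsilon_i = O_P(1)$ by the CLT applied with variance $\sum_i e_i^2 \sigma^{\star 2}$, and $(\rho^{\star} - \overline{\rho}_n) \sum_i e_i z_{i-1} = O_P(n^{-1/2})$ since $\sum_i |e_i z_{i-1}|$ is bounded in probability by a convergent geometric series. After decomposing $z_i - \overline{\rho}_n z_{i-1} - \delta_{k(i)}$ into the $(\rho^\star,\boldsymbol{\delta}^\star,\boldsymbol{t}^\star)$-innovation plus perturbation terms, one obtains $SS_m(y, \overline{\rho}_n, \boldsymbol{\delta}, \boldsymbol{t}) - SS_m(z, \overline{\rho}_n, \boldsymbol{\delta}, \boldsymbol{t}) = O_P(1)$ uniformly over $(\boldsymbol{\delta}, \boldsymbol{t})$ in any neighborhood of $(\boldsymbol{\delta}^{\star}, \boldsymbol{t}^{\star})$ of the rates guaranteed by Proposition \ref{Prop:Segment}, which in particular contains $(\widehat{\boldsymbol{\delta}}_n(z,\overline{\rho}_n), \widehat{\boldsymbol{t}}_n(z,\overline{\rho}_n))$.

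Denoting by $(\widehat{\boldsymbol{\delta}}^y, \widehat{\boldsymbol{t}}^y)$ and $(\widehat{\boldsymbol{\delta}}^z, \widehat{\boldsymbol{t}}^z)$ the minimizers \eqref{eq:delta_n,t_n} computed on $y$ and $z$ respectively, optimality on $y$ combined with the previous comparison yields
\[
SS_m(z, \overline{\rho}_n, \widehat{\boldsymbol{\delta}}^y, \widehat{\boldsymbol{t}}^y) \leq SS_m(z, \overline{\rho}_n, \widehat{\boldsymbol{\delta}}^z, \widehat{\boldsymbol{t}}^z) + O_P(1),
\]
so that $(\widehat{\boldsymbol{\delta}}^y, \widehat{\boldsymbol{t}}^y)$ is an $O_P(1)$-approximate minimizer of $SS_m(z, \overline{\rho}_n, \cdot, \cdot)$. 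The main obstacle is therefore to revisit the proof of Proposition \ref{Prop:Segment} and verify that it actually furnishes quadratic (resp.\ linear) lower bounds of the excess risk of the form $SS_m(z, \overline{\rho}_n, \boldsymbol{\delta}, \boldsymbol{t}) - SS_m(z, \overline{\rho}_n, \boldsymbol{\delta}^{\star}, \boldsymbol{t}^{\star}) \geq c \bigl( n \| \boldsymbol{\delta} - \boldsymbol{\delta}^{\star}\|^2 + n \| \boldsymbol{t}/n - \boldsymbol{\tau}^{\star}\| \bigr)$ with high probability, in which case any $O_P(1)$-near-minimizer, including $(\widehat{\boldsymbol{\delta}}^y, \widehat{\boldsymbol{t}}^y)$, achieves the stated rates. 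This argument is built into the M-estimation proof of Proposition \ref{Prop:Segment} but must be spelled out. The extension to non-Gaussian $(\epsilon_i)$ is immediate since every step only uses the existence of a finite second moment.
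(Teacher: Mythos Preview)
Your coupling $d_i=y_i-z_i$ is exactly the bridge the paper uses (their $\overline{\Delta}$ is essentially $-e$ in your notation), and the estimate $\sum_i d_i^2=O(1)$ is correct. However, the way you transfer the rates from $z$ to $y$ has a circularity that breaks the argument.

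You establish $SS_m(y,\overline{\rho}_n,\boldsymbol{\delta},\boldsymbol{t})-SS_m(z,\overline{\rho}_n,\boldsymbol{\delta},\boldsymbol{t})=O_P(1)$ only on a neighbourhood containing $(\widehat{\boldsymbol{\delta}}^z,\widehat{\boldsymbol{t}}^z)$. But the displayed inequality
\[
SS_m(z,\overline{\rho}_n,\widehat{\boldsymbol{\delta}}^y,\widehat{\boldsymbol{t}}^y)\le SS_m(z,\overline{\rho}_n,\widehat{\boldsymbol{\delta}}^z,\widehat{\boldsymbol{t}}^z)+O_P(1)
\]
requires applying the comparison at $(\widehat{\boldsymbol{\delta}}^y,\widehat{\boldsymbol{t}}^y)$, which you have not yet shown lies in any such neighbourhood. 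A quick calculation shows the difference contains the term $\langle \overline{\Delta},T(\boldsymbol{t})\boldsymbol{\delta}\rangle$, which is $O_P(1)\cdot\max_k|\delta_k|$; and the profiled $\widehat{\delta}^y_k$ over an arbitrary $\boldsymbol{t}\in\mathcal{A}_{n,m}$ is only $O_P(\sqrt{n/\Delta_n})$, not $O_P(1)$. So the bound is not uniform where you need it. Moreover, your final step posits a pathwise lower bound $SS_m(z,\cdot,\boldsymbol{\delta},\boldsymbol{t})-SS_m(z,\cdot,\boldsymbol{\delta}^\star,\boldsymbol{t}^\star)\ge c\,(n\|\boldsymbol{\delta}-\boldsymbol{\delta}^\star\|^2+\|\boldsymbol{t}-\boldsymbol{t}^\star\|)$; the Lavielle--Moulines machinery underlying Proposition~\ref{Prop:Segment} does not give such a bound directly---it works at the level of probabilities $P(\min_{\boldsymbol{t}\in\mathcal{C}}J_n\le 0)\to 0$, and upgrading this to handle $O_P(1)$-approximate minimizers is precisely the work you defer.

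The paper avoids both problems by profiling out $\boldsymbol{\delta}$ and working with $J_n(\overline{v},\boldsymbol{t})=J_n(\overline{w},\boldsymbol{t})+J_n(\overline{\Delta},\boldsymbol{t})-\frac{2}{n}(\langle\pi_{E_{\boldsymbol{t}^\star}}\overline{w},\pi_{E_{\boldsymbol{t}^\star}}\overline{\Delta}\rangle-\langle\pi_{E_{\boldsymbol{t}}}\overline{w},\pi_{E_{\boldsymbol{t}}}\overline{\Delta}\rangle)$. Cauchy--Schwarz and $\|\overline{\Delta}\|=O_P(1)$, $\|\overline{w}\|=O_P(\sqrt n)$ give $\sup_{\boldsymbol{t}}|J_n(\overline{v},\boldsymbol{t})-J_n(\overline{w},\boldsymbol{t})|=O_P(n^{-1/2})$, uniform over \emph{all} $\boldsymbol{t}$. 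With this in hand, the consistency step (Lemma~\ref{lem:consistency}) and the rate step (Lemma~\ref{lem:rateT}) are re-run verbatim for $J_n(\overline{v},\cdot)$, splitting off half of the deterministic term $K_n(\overline{w},\cdot)$ to absorb the $O_P(n^{-1/2})$ correction. No localization of $(\widehat{\boldsymbol{\delta}}^y,\widehat{\boldsymbol{t}}^y)$ is needed beforehand, and the LM probability bounds are reused rather than reinterpreted as pathwise inequalities.
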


The proofs of Propositions \ref{Prop:Segment} and \ref{Prop:Segment2} are given in Sections \ref{subsec:prop:Segment} and 
\ref{subsec:prop:Segment2}, respectively. 
Note that the estimators defined in these propositions have the same asymptotic properties 
as those of the estimators proposed by \cite{LM}. 
In the Gaussian framework, the estimator $\widetilde{\rho}_n$ defined in Section \ref{sec:correlation} satisfies the same properties as $\overline{\rho}_n$ and can thus be used in the criterion $SS_m$ for providing consistent estimators of the change-points and of the means.


\section{Selecting the number of change-points} \label{sec:selection}

We now consider the selection of the number of change-points. We first propose a penalized contrast criterion, which we prove to be consistent in the non-Gaussian case. The penalty has a general form, which needs to be specified for a practical use. Therefore, we also derive an adaptation of the modified BIC criterion proposed by \cite{zs} in the Gaussian context. This criterion does not rely on any tuning parameter and has been shown to be efficient in practical cases (see \cite{picard2011joint}).

\subsection{Consistent model selection criterion}\label{subsec:beta}

We propose to select the number of change-points $m$ as follows 
\begin{equation} \label{Eq:ConsistentCrit}
\widehat{m} = \underset{ 0\leq m \leq m_{\max}}{\argmin} \frac1n SS_m(z, \overline{\rho}_n) + \beta_n m
\end{equation}
where $m_{\max}\geq m^{\star}$, $\left(\beta_n\right)_{n\geq 1}$ is a sequence of positive real numbers, $\overline{\rho}_n$ satisfies the assumptions of Proposition \ref{Prop:Segment} and
\begin{equation} \label{Eq:SSm}
SS_m(z, \rho) = \min_{\boldsymbol{\delta}, \boldsymbol{t} \in \mathcal{A}_{n,m}} SS_m(z, \rho, \boldsymbol{\delta}, \boldsymbol{t}) \; ,
\end{equation}
$\mathcal{A}_{n,m}$ being defined in~(\ref{eq:Anm}).

\begin{prop}\label{Prop:SelBeta}
Under the assumptions of Proposition \ref{Prop:Segment},  and if
\begin{eqnarray*}
\beta_n \underset{n\to\infty}{\longrightarrow} 0 , \quad n^{1/2}\beta_n\underset{n\to\infty}{\longrightarrow} +\infty , \quad \Delta_n\beta_n\underset{n\rightarrow\infty}{\longrightarrow} +\infty \; ,
\end{eqnarray*}
where $\Delta_n$ is defined in Proposition \ref{Prop:Segment}, $\widehat{m}$ defined by \eqref{Eq:ConsistentCrit} converges in probability to $m^{\star}$.\\
The result still holds if the $\epsilon_i$'s are only assumed to be independent, centered and to have a finite second order moment.
\end{prop}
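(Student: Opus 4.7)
The plan is to bound
\[
\PP(\widehat{m} \neq m^\star) \;\leq\; \sum_{\substack{0\le m\le m_{\max}\\ m\ne m^\star}} \PP(\widehat{m} = m)
\]
and to show that each summand tends to zero, splitting the analysis into under-selection ($m<m^\star$) and over-selection ($m>m^\star$).

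For under-selection, I would start from an identifiability-type lower bound: for every fixed $m<m^\star$, any $m$-segmentation is forced to merge at least two true segments with distinct means, which produces an $\Omega(n)$ excess in the residual sum of squares. Formally, I would show that there exists $c_m>0$ such that
\[
\liminf_{n\to\infty}\; \frac{1}{n}\bigl(SS_m(z,\rho^\star) - SS_{m^\star}(z,\rho^\star)\bigr) \;\geq\; c_m
\qquad \text{in probability,}
\]
along the lines of the identifiability step underlying Proposition \ref{Prop:Segment}. The dependence on $\overline{\rho}_n$ is handled by observing that $\rho\mapsto SS_m(z,\rho)$ is a quadratic polynomial whose coefficients are sums of order $n$ in stationary quantities, so
\[
\frac{1}{n}\bigl|SS_m(z,\overline{\rho}_n)-SS_m(z,\rho^\star)\bigr| \;=\; O_P\bigl(|\overline{\rho}_n-\rho^\star|\bigr) \;=\; O_P(n^{-1/2})
\]
by \eqref{eq:hypRhoRate} and stationarity of $(\eta_i)$. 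Since $\beta_n(m^\star-m)\to 0$ while the SS gap remains bounded below by $c_m>0$, the event $\{\widehat{m}=m\}$ has vanishing probability.

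For over-selection, I would exploit Proposition \ref{Prop:Segment} to obtain the localization rate $\widehat{\boldsymbol{t}}_n(z,\overline{\rho}_n) = \boldsymbol{t_n^{\star}} + O_P(1)$ for the $m^\star$-segmentation, so that with probability tending to one any $m$-candidate with $m>m^\star$ essentially inserts $m-m^\star$ spurious breaks inside true segments. The gain in $SS$ from inserting one such break at a feasible position splitting a true segment into sub-lengths $L,R\ge\Delta_n$ is of the form
\[
\frac{LR}{L+R}\,\bigl(\overline{\eta}_L-\overline{\eta}_R\bigr)^2,
\]
where $\overline{\eta}_L,\overline{\eta}_R$ are averages of the stationary AR(1) noise. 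Each such quantity is $O_P(1)$, and a maximal inequality over the at most $O(n/\Delta_n)$ admissible positions (this is where $\Delta_n\beta_n\to\infty$ is used) yields
\[
\frac{1}{n}\bigl(SS_{m^\star}(z,\overline{\rho}_n)-SS_m(z,\overline{\rho}_n)\bigr) \;=\; o_P(\beta_n).
\]
The penalty gap $\beta_n(m-m^\star)$ then dominates thanks to $n^{1/2}\beta_n\to\infty$, and $\PP(\widehat{m}=m)\to 0$.

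The over-selection step is the main obstacle, because it requires a uniform bound on the improvement in $SS$ over the large combinatorial class $\mathcal{A}_{n,m}$ while only finite second moments of the $\epsilon_i$'s are assumed; the minimum-segment-length constraint and the coupling $\Delta_n\beta_n\to\infty$ are precisely what make this bound tractable without Gaussian tails. Once both regimes are controlled, summation over the finitely many values $0\le m\le m_{\max}$ gives $\PP(\widehat{m}=m^\star)\to 1$. The non-Gaussian extension is immediate since every step relies on the $\epsilon_i$'s only through stationarity and a finite variance, and these properties propagate to $(\eta_i)$ through \eqref{eq:ar1}.
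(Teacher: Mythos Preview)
Your under-selection argument is essentially correct and parallels the paper's Lemma~\ref{Lem:infSelBeta}: a strictly positive $\liminf$ for the normalized $SS$-gap, combined with the $O_P(n^{-1/2})$ uniform perturbation from $\overline{\rho}_n$ to $\rho^\star$, is exactly what is needed, and $\beta_n\to 0$ finishes it.

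The over-selection argument has a genuine gap. You invoke Proposition~\ref{Prop:Segment} to localize the $m^\star$-minimizer and then assert that the optimal $m$-candidate for $m>m^\star$ ``essentially inserts $m-m^\star$ spurious breaks inside true segments''. But Proposition~\ref{Prop:Segment} concerns only the minimizer over $\mathcal{A}_{n,m^\star}$; it says nothing about the structure of the minimizer over $\mathcal{A}_{n,m}$ for $m>m^\star$, and no nesting result is available to force the latter to contain the true change-points. Your subsequent maximal bound over ``at most $O(n/\Delta_n)$ admissible positions'' is therefore taken over the wrong set, and the count itself is off (one extra break in a segment of length $\ell$ has $O(\ell)$ feasible positions under the $\Delta_n$ constraint, not $O(\ell/\Delta_n)$). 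You have also misassigned the hypotheses: once the $SS$-gap is $o_P(\beta_n)$ the penalty already dominates, so invoking $n^{1/2}\beta_n\to\infty$ at that stage is vacuous; in fact $n^{1/2}\beta_n\to\infty$ is needed to absorb the uniform perturbation $\sup_{\boldsymbol{t}}|J_n(\overline{w},\boldsymbol{t})-J_n(w^\star,\boldsymbol{t})|=O_P(n^{-1/2})$ from Lemma~\ref{lem:BoundedUnifBound}, while $\Delta_n\beta_n\to\infty$ controls the pure-noise term. The paper's remedy is an embedding trick: given any $\boldsymbol{t}\in\mathcal{A}_{n,m}$ with $J_n(\overline{w},\boldsymbol{t})+\beta_n\le 0$, \emph{add} the true change-points to form $\boldsymbol{t}'\in\mathcal{A}_{n,m'}$, $m'\le m+m^\star$, with $E_{\boldsymbol{t}_n^\star}\subset E_{\boldsymbol{t}'}$ and $J_n(\overline{w},\boldsymbol{t}')\le J_n(\overline{w},\boldsymbol{t})$. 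This nesting forces $K_n(w^\star,\boldsymbol{t}')=W_n(w^\star,\boldsymbol{t}')=0$, so only $V_n(w^\star,\boldsymbol{t}')$ survives; the uniform bound \eqref{eq:LMboundV} then reduces everything to $\max_{s}\bigl(\sum_{i\le s}\epsilon_i\bigr)^2$, which Kolmogorov's maximal inequality controls under a finite second moment alone, and this is precisely where $\Delta_n\beta_n\to\infty$ enters.
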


\begin{prop}\label{Prop:SelBeta2}
The result of Proposition \ref{Prop:SelBeta} still holds under the same assumptions 
when $z$ is replaced by $y$ satisfying \eqref{eq:modele_new}.\\
The result still holds if the $\epsilon_i$'s are only assumed to be independent, centered and to have a finite second order moment.
\end{prop}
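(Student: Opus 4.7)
The strategy is to deduce Proposition~\ref{Prop:SelBeta2} from Proposition~\ref{Prop:SelBeta} by coupling the processes $y$ and $z$ and showing that the two penalized criteria $\tfrac1n SS_m(y,\overline{\rho}_n)+\beta_n m$ and $\tfrac1n SS_m(z,\overline{\rho}_n)+\beta_n m$ differ by $o_P(\beta_n)$ uniformly in $m\leq m_{\max}$, so that both select the same number of change-points with probability tending to one.

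First, I would couple $y$ and $z$ by taking $y_0=z_0$ and the same innovations $(\epsilon_i)$ in \eqref{eq:modele_new}--\eqref{eq:ar1} and \eqref{eq:bkw}. A direct calculation from the two recursions shows that the difference $d_i:=y_i-z_i$ satisfies $d_i=\rho^{\star}d_{i-1}+\nu_i$, where $\nu_i$ is zero except at the $m^{\star}$ indices $t_{n,k}^{\star}+1$ ($1\leq k\leq m^{\star}$), at which it equals $\rho^{\star}(\mu_k^{\star}-\mu_{k-1}^{\star})$. Because $|\rho^{\star}|<1$ and only finitely many $\nu_i$ are non-zero, this yields the deterministic bounds $\sup_{0\leq i\leq n}|d_i|\leq C$ and $\sum_{i=0}^n d_i^2\leq C$ for a constant $C$ depending only on $\rho^{\star}$ and the jump sizes; this $\ell^2$-summability is the key gain from the coupling.

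Next, fix $(m,\boldsymbol{t})$ with $\boldsymbol{t}\in\mathcal{A}_{n,m}$ and a real $\rho$, and decompose $y_i-\rho y_{i-1}=a_i(\rho)+e_i(\rho)$ with $a_i(\rho):=z_i-\rho z_{i-1}$ and $e_i(\rho):=d_i-\rho d_{i-1}$. Profiling out $\boldsymbol{\delta}$ by within-segment centering (means $\bar a_k,\bar e_k$) and expanding the squared residuals gives
\begin{multline*}
\min_{\boldsymbol{\delta}}SS_m(y,\rho,\boldsymbol{\delta},\boldsymbol{t}) - \min_{\boldsymbol{\delta}}SS_m(z,\rho,\boldsymbol{\delta},\boldsymbol{t})\\
= 2\sum_{k=0}^{m}\sum_{i=t_k+1}^{t_{k+1}}(a_i(\rho)-\bar a_k)(e_i(\rho)-\bar e_k) + \sum_{k=0}^{m}\sum_{i=t_k+1}^{t_{k+1}}(e_i(\rho)-\bar e_k)^2 .
\end{multline*}
Using $\sum_i e_i(\overline{\rho}_n)^2\leq 2\sum_i\nu_i^2+2(\overline{\rho}_n-\rho^{\star})^2\sum_i d_{i-1}^2 = O_P(1)$ (from the previous step and \eqref{eq:hypRhoRate}) makes the second sum $O_P(1)$ uniformly in $\boldsymbol{t}$. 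The Cauchy--Schwarz inequality bounds the cross term by $2\sqrt{\min_{\boldsymbol{\delta}}SS_m(z,\overline{\rho}_n,\boldsymbol{\delta},\boldsymbol{t})\cdot O_P(1)}$, and the former is at most $\sum_i a_i(\overline{\rho}_n)^2=O_P(n)$ uniformly in $\boldsymbol{t}$. Minimizing over $\boldsymbol{t}\in\mathcal{A}_{n,m}$ therefore gives $|SS_m(y,\overline{\rho}_n)-SS_m(z,\overline{\rho}_n)|=O_P(\sqrt{n})$, hence $\tfrac1n SS_m(y,\overline{\rho}_n)=\tfrac1n SS_m(z,\overline{\rho}_n)+O_P(n^{-1/2})$ uniformly in $0\leq m\leq m_{\max}$.

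Since $n^{1/2}\beta_n\to\infty$, the remainder is $o_P(\beta_n)$. Combining with the quantitative gap estimates implicit in the proof of Proposition~\ref{Prop:SelBeta} (a gap of order $\beta_n$ for $m>m^{\star}$ and a gap of constant order for $m<m^{\star}$), the minimizers of the $y$- and $z$-based penalized criteria coincide with probability tending to one, so $\widehat m$ defined from $y$ converges in probability to $m^{\star}$. All steps use only that the $\epsilon_i$'s are centered with finite second moment, giving the non-Gaussian extension. The main obstacle is the uniform control of the contrast difference in the third paragraph; the $O_P(\sqrt n)$ cross-term bound is uniform in $\boldsymbol{t}$ only because of the deterministic summability $\sum_i d_i^2\leq C$ obtained in the coupling step, which bypasses the combinatorial complexity of $\mathcal{A}_{n,m}$ and the randomness of $\overline{\rho}_n$.
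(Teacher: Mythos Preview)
Your argument is correct and rests on the same key estimate as the paper, but the organization is different. The paper does not compare the penalized criteria for $y$ and $z$ globally; instead it re-proves the two cases $m<m^\star$ and $m>m^\star$ directly for the process $y$, inserting at each step the uniform bound $\sup_{\boldsymbol t}\,|J_n(\overline v,\boldsymbol t)-J_n(\overline w,\boldsymbol t)|=O_P(n^{-1/2})$ (its display~\eqref{eq:controleunif}). Your cruder bound $\tfrac1n\,|SS_m(y,\overline\rho_n)-SS_m(z,\overline\rho_n)|=O_P(n^{-1/2})$ uniformly in $m\le m_{\max}$ plays the same role and follows from the same core observation, namely that the decorrelated difference $\overline v-\overline w$ has $\ell^2$ norm $O_P(1)$; this is exactly the paper's Lemma~\ref{Lem:Delta_order}, and your $\sum_i e_i(\overline\rho_n)^2=O_P(1)$ is the same statement. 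Your deterministic refinement $\sum_i d_i^2\le C$ is a shade sharper than what the paper writes but not needed here. Incidentally, with more care on the cross term (using the sparsity of the impulse sequence $\nu$ rather than Cauchy--Schwarz) one gets the tighter $O_P(1)$ bound that the paper establishes in Lemmas~\ref{Lem:SSmY} and~\ref{Lem:SmYZ} for the mBIC proof; your $O_P(\sqrt n)$ is sufficient for the present proposition.

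One point in your sketch should be made explicit. The final reduction --- ``the minimizers coincide because the gap in Proposition~\ref{Prop:SelBeta} is of order $\beta_n$ for $m>m^\star$ and of constant order for $m<m^\star$'' --- is true, but it does not follow from the \emph{statement} of Proposition~\ref{Prop:SelBeta}; it requires going back into its proof. What you actually need is that $P\bigl(C_z(m)\le C_z(m^\star)+\varepsilon_n\bigr)\to 0$ for every $\varepsilon_n=o(\beta_n)$ and every $m\neq m^\star$. This holds because the proofs of Lemmas~\ref{Lem:infSelBeta} and~\ref{Lem:supSelBeta} are invariant under replacing $\beta_n$ by $c\beta_n$ for any fixed $c>0$, which absorbs an $o_P(\beta_n)$ perturbation. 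Adding one sentence to that effect would close the only soft spot in the argument.
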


The proofs of Propositions \ref{Prop:SelBeta} and \ref{Prop:SelBeta2} are given in Sections \ref{subsec:bicbardet} and \ref{subsec:mbicbardet_bis}, respectively.

\begin{remark}
If $\beta_n = n^{-\beta}$, the assumptions of Propositions \ref{Prop:SelBeta} and \ref{Prop:SelBeta2} are fulfilled if and only if $0<\beta < \min \left(\alpha , 1/2 \right)$, where $\alpha$ is defined in Proposition \ref{Prop:Segment}. $\alpha$ stands for the usual bound for the control of the minimal segment length (see \cite{LM}). The $1/2$ bound is the price to pay for the estimation of $\rho^{\star}$.
\end{remark}

\subsection{Modified BIC criterion}
\cite {zs} proposed a modified Bayesian Information Criterion (mBIC) to select the number $m$ of change-points in the particular case of segmentation of an independent Gaussian process $x$. This criterion is defined in a Bayesian context in which a non informative prior is set for the number of segments $m$. 
mBIC is derived from an $O_P(1)$ approximation of the Bayes factor between models with $m$ and $0$ change-points, respectively.
The mBIC selection procedure consists in choosing the number of change-points as:
\begin{equation}\label{eq:Criterion_formula}
\widehat{m} = \argmax_m C_m(x, 0)
\end{equation}
where the criterion $C_m(y, \rho)$ is defined for a process $y$ as
\begin{multline*}
  C_m(y, \rho) = \\
- \frac{n-m+1}{2} \log SS_m(y, \rho) + \log
\Gamma\left(\frac{n-m+1}{2}\right) -\frac{1}{2} \sum_{k=0}^m \log n_k(\boldsymbol{\widehat{t}}(y, \rho)) - m \log n ,
\end{multline*}
where $\Gamma$ is the usual Gamma function. In the latter equation
\begin{equation}\label{eq:nk}
n_k(\boldsymbol{\widehat{t}}(y, \rho))=\widehat{t}_{k+1}(y, \rho)-\widehat{t}_{k}(y, \rho),
\end{equation}
where $\boldsymbol{\widehat{t}}(y, \rho)=(\widehat{t}_1(y, \rho), \dots, \widehat{t}_m(y, \rho))$ is defined as $\boldsymbol{\widehat{t}}(y, \rho)= \underset{\boldsymbol{t}\in\mathcal{A}_{n,m}}{\argmin} \min_\delta SS_m(y, \rho, \delta, \boldsymbol{t})$.

Note that, in model \eqref{eq:bkw}, the criterion could be directly applied to the decorrelated series $v^{\star} = \left(v^{\star}_i\right)_{1\leq i\leq n} = \left(y_i - \rho^{\star} y_{i-1}\right)_{1\leq i\leq n}$ since
$$
C_m(y, \rho^{\star}) = C_m(v^{\star}, 0).
$$ 
We propose to use the same selection criterion, replacing $\rho^{\star}$ by some relevant estimator $\overline{\rho}_n$. 
The following two propositions show that this plug-in approach result in the same asymptotic properties under both Model \eqref{eq:bkw} and \eqref{eq:modele_new}. 

\begin{prop} \label{Prop:mBICBardet}
For any positive $m$, for a process $z$ satisfying \eqref{eq:bkw} and under the assumptions of Proposition \ref{Prop:Segment}, we have
\begin{eqnarray*}
C_m(z, \overline{\rho}_n) & = & C_m(z, \rho^{\star}) + O_P(1),\textrm{ as } n\to\infty\;.
\end{eqnarray*}
\end{prop}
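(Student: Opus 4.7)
The plan is to decompose $C_m(z,\overline{\rho}_n) - C_m(z,\rho^{\star})$ according to the four additive terms of $C_m$. The $\log\Gamma((n-m+1)/2)$ and $-m\log n$ terms do not involve $\rho$ and cancel exactly. For the segment-length term $-\tfrac{1}{2}\sum_{k=0}^m \log n_k(\boldsymbol{\widehat{t}}(z,\rho))$, applying Proposition~\ref{Prop:Segment} to both $\rho = \overline{\rho}_n$ and $\rho=\rho^{\star}$ (both satisfy \eqref{eq:hypRhoRate}) yields $n_k(\boldsymbol{\widehat{t}}(z,\rho)) = n(\tau^{\star}_{k+1}-\tau^{\star}_k) + O_P(1)$; the log difference between the two choices of $\rho$ is then $O_P(1/n)$, so this term contributes $o_P(1)$. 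The essential task is therefore to show $\tfrac{n-m+1}{2}\bigl[\log SS_m(z,\overline{\rho}_n) - \log SS_m(z,\rho^{\star})\bigr] = O_P(1)$. Under \eqref{eq:bkw} the residuals at $(\rho^{\star},\boldsymbol{\delta^{\star}},\boldsymbol{t^{\star}})$ are exactly the innovations $\epsilon_i$, so $SS_m(z,\rho^{\star})/n$ converges in probability to $\sigma^{\star 2}>0$; using $\log(1+x)=O_P(x)$ for $x=o_P(1)$, this reduces the claim to $SS_m(z,\overline{\rho}_n)-SS_m(z,\rho^{\star}) = O_P(1)$.

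To bound this gap, introduce $V_m(z,\rho,\boldsymbol{t}):=\min_{\boldsymbol{\delta}} SS_m(z,\rho,\boldsymbol{\delta},\boldsymbol{t})$ and set $\boldsymbol{\widehat{t}}^{\star}:=\boldsymbol{\widehat{t}}(z,\rho^{\star})$, $\boldsymbol{\widehat{t}}_n:=\boldsymbol{\widehat{t}}(z,\overline{\rho}_n)$. Optimality of each argmin supplies the sandwich
\begin{equation*}
V_m(z,\overline{\rho}_n,\boldsymbol{\widehat{t}}_n) - V_m(z,\rho^{\star},\boldsymbol{\widehat{t}}_n) \leq SS_m(z,\overline{\rho}_n) - SS_m(z,\rho^{\star}) \leq V_m(z,\overline{\rho}_n,\boldsymbol{\widehat{t}}^{\star}) - V_m(z,\rho^{\star},\boldsymbol{\widehat{t}}^{\star}).
\end{equation*}
Because $u_i(\rho):=z_i-\rho z_{i-1}$ is affine in $\rho$, the map $\rho\mapsto V_m(z,\rho,\boldsymbol{t})$ is quadratic, and an exact Taylor expansion around $\rho^{\star}$ gives
\begin{equation*}
V_m(z,\rho,\boldsymbol{t}) - V_m(z,\rho^{\star},\boldsymbol{t}) = (\rho-\rho^{\star})\,A_n(\boldsymbol{t}) + \tfrac12 (\rho-\rho^{\star})^2 B_n(\boldsymbol{t}),
\end{equation*}
with $B_n(\boldsymbol{t})=2\sum_{k=0}^m\sum_{i=t_k+1}^{t_{k+1}}(z_{i-1}-\bar{z}_{k,-})^2$ and $A_n(\boldsymbol{t})=-2\sum_{k=0}^m\sum_{i=t_k+1}^{t_{k+1}}(u_i(\rho^{\star})-\bar{u}_k(\rho^{\star}))(z_{i-1}-\bar{z}_{k,-})$, the bars denoting segment means.

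Together with $\overline{\rho}_n-\rho^{\star}=O_P(n^{-1/2})$, it remains to verify $A_n(\boldsymbol{t})=O_P(n^{1/2})$ and $B_n(\boldsymbol{t})=O_P(n)$ at $\boldsymbol{t}\in\{\boldsymbol{\widehat{t}}^{\star},\boldsymbol{\widehat{t}}_n\}$. The control of $B_n$ is immediate from the stationarity of $(z_i)$ within each true segment. The bound on $A_n$ is the crux of the argument: under \eqref{eq:bkw}, $u_i(\rho^{\star})=\delta^{\star}_{k(i)}+\epsilon_i$ with $k(i)$ the true segment index of $i$; whenever a segment of $\boldsymbol{t}$ lies inside a single true segment, $u_i-\bar{u}_k = \epsilon_i-\bar{\epsilon}_k$, so $A_n$ collapses to a martingale-type sum $\sum_i \epsilon_i z_{i-1}$ (minus its segment-wise compensator), of order $O_P(n^{1/2})$ by a CLT for martingale differences exploiting the independence of the innovation $\epsilon_i$ from $z_{i-1}$. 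The main obstacle is that Proposition~\ref{Prop:Segment} only furnishes $\|\boldsymbol{\widehat{t}}-\boldsymbol{t^{\star}}\|=O_P(1)$, so segments of $\boldsymbol{\widehat{t}}$ may straddle a true change-point at an $O_P(1)$ number of indices; one must verify that this misalignment perturbs $A_n$ by only $O_P(1)$, preserving the rate. The sandwich then squeezes $SS_m(z,\overline{\rho}_n)-SS_m(z,\rho^{\star})$ to $O_P(1)$, which completes the proof.
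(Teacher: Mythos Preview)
Your proposal is correct and follows essentially the same route as the paper: both arguments dispose of the $\log\Gamma$ and $m\log n$ terms trivially, handle the segment-length term via Proposition~\ref{Prop:Segment}, and reduce everything to showing $SS_m(z,\overline{\rho}_n)-SS_m(z,\rho^{\star})=O_P(1)$ together with $SS_m(z,\rho^{\star})^{-1}=O_P(n^{-1})$; both then establish the $O_P(1)$ gap by a sandwich argument (evaluating at the optimal segmentation for one $\rho$ and suboptimal for the other) combined with a quadratic expansion in $\rho$, bounding the linear cross-term as $O_P(n^{1/2})$ via the martingale structure of $\sum_i \epsilon_i z_{i-1}$ and the quadratic term as $O_P(n)$ via $\Vert Bz\Vert^2=O_P(n)$. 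The only cosmetic difference is that the paper fixes both $(T,\delta)$ at an optimum in the sandwich and expands $\Vert y-\overline{\rho}_n By-T\delta\Vert^2-\Vert y-\rho^{\star}By-T\delta\Vert^2$ directly, whereas you minimize over $\delta$ first and write an explicit second-order Taylor expansion of $V_m$; this is a packaging choice, not a different idea.
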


\begin{prop} \label{Prop:mBIC}
For any positive $m$, for a process $y$ satisfying \eqref{eq:modele_new} and under the assumptions of Proposition \ref{Prop:Segment2} , we have
\begin{eqnarray*}
C_m(y, \overline{\rho}_n) & = & C_m(y, \rho^{\star}) + O_P(1),\textrm{ as } n\to\infty\;.
\end{eqnarray*}
\end{prop}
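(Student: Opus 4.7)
The plan is to mirror the proof of Proposition \ref{Prop:mBICBardet}, substituting Proposition \ref{Prop:Segment2} for Proposition \ref{Prop:Segment}. First I would write
\begin{multline*}
C_m(y, \overline{\rho}_n) - C_m(y, \rho^\star) = -\tfrac{n-m+1}{2}\bigl[\log SS_m(y,\overline{\rho}_n) - \log SS_m(y,\rho^\star)\bigr] \\
- \tfrac{1}{2} \sum_{k=0}^m \bigl[\log n_k(\boldsymbol{\widehat{t}}(y,\overline{\rho}_n)) - \log n_k(\boldsymbol{\widehat{t}}(y,\rho^\star))\bigr],
\end{multline*}
since $\log\Gamma((n-m+1)/2)$ and $-m\log n$ do not depend on $\rho$ and cancel. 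The goal reduces to showing each of the two surviving pieces is $O_P(1)$.

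For the segment-length piece, applying Proposition \ref{Prop:Segment2} first with $\overline{\rho}_n$ and then with the constant sequence $\rho^\star$ (which trivially satisfies \eqref{eq:hypRhoRate}) gives $\widehat{t}_k(y, \cdot) - t^\star_k = O_P(1)$ in both cases. Since $\tau^\star_{k+1} - \tau^\star_k > 0$, each $n_k$ equals $n(\tau^\star_{k+1} - \tau^\star_k) + O_P(1)$, so every log difference is $O_P(1/n)$ and the finite sum is $o_P(1)$.

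The harder piece is the $\log SS_m$ term, for which I would establish $SS_m(y,\overline{\rho}_n) - SS_m(y,\rho^\star) = O_P(1)$ by a sandwich argument. Setting $(\boldsymbol{\widehat{\delta}}^\star, \boldsymbol{\widehat{t}}^\star) = (\boldsymbol{\widehat{\delta}}_n(y,\rho^\star), \boldsymbol{\widehat{t}}_n(y,\rho^\star))$, the inequality $SS_m(y,\overline{\rho}_n) \le SS_m(y,\overline{\rho}_n, \boldsymbol{\widehat{\delta}}^\star, \boldsymbol{\widehat{t}}^\star)$ combined with the quadratic dependence of $SS_m$ on $\rho$ yields
\begin{equation*}
SS_m(y,\overline{\rho}_n, \boldsymbol{\widehat{\delta}}^\star, \boldsymbol{\widehat{t}}^\star) - SS_m(y,\rho^\star) = -2(\overline{\rho}_n - \rho^\star) A_n + (\overline{\rho}_n - \rho^\star)^2 B_n,
\end{equation*}
with $A_n = \sum_{k=0}^m \sum_{i=\widehat{t}^\star_k+1}^{\widehat{t}^\star_{k+1}} y_{i-1}(y_i - \rho^\star y_{i-1} - \widehat{\delta}^\star_k)$ and $B_n = \sum_{i=1}^n y_{i-1}^2$. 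By stationarity of $(\eta_i)$ and boundedness of the piecewise-constant mean, $B_n = O_P(n)$. For $A_n$ one decomposes the residuals: on indices lying in a segment that the estimate localises correctly, $y_i - \rho^\star y_{i-1} - \widehat{\delta}^\star_k = \epsilon_i + (1-\rho^\star)\mu^\star_k - \widehat{\delta}^\star_k = \epsilon_i + O_P(n^{-1/2})$, since Proposition \ref{Prop:Segment2} delivers $\widehat{\delta}^\star_k - (1-\rho^\star)\mu^\star_k = O_P(n^{-1/2})$; the $O_P(1)$ mislabelled indices near each change-point contribute only $O_P(1)$ because the jumps $\mu^\star_k - \mu^\star_{k-1}$ are fixed and finitely many. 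Thus $A_n = \sum_i y_{i-1}\epsilon_i + O_P(n^{1/2}) = O_P(n^{1/2})$ by a martingale central limit theorem, using the independence of $\epsilon_i$ from $y_{i-1}$. Combined with $|\overline{\rho}_n - \rho^\star| = O_P(n^{-1/2})$, both deviation terms are $O_P(1)$, and the symmetric inequality (swap the roles of $\overline{\rho}_n$ and $\rho^\star$) supplies the matching lower bound. Finally $SS_m(y,\rho^\star) \geq c n$ with probability tending to $1$ for some $c > 0$ (evaluate at the true change-points and means), so $\log SS_m(y,\overline{\rho}_n) - \log SS_m(y,\rho^\star) = O_P(1/n)$, and multiplication by $(n-m+1)/2$ gives $O_P(1)$.

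The main obstacle is the treatment of $A_n$: the summation runs over estimated rather than true segments, the plug-in residuals are not exactly the innovations $\epsilon_i$, and the $O_P(1)$ indices where $\widehat{t}^\star_k \neq t^\star_k$ produce cross-terms involving the finite but nontrivial jumps $\mu^\star_k - \mu^\star_{k-1}$ specific to model \eqref{eq:modele_new}, which do not appear in the continuous-transition model \eqref{eq:bkw} underlying Proposition \ref{Prop:mBICBardet}. These contributions must be carefully tracked through the $O_P$-rates from Proposition \ref{Prop:Segment2} to confirm that $A_n = O_P(n^{1/2})$.
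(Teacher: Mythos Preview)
Your overall strategy matches the paper's exactly: split $C_m(y,\overline\rho_n)-C_m(y,\rho^\star)$ into the segment-length piece and the $\log SS_m$ piece, handle the former via Proposition~\ref{Prop:Segment2}, and handle the latter by showing $SS_m(y,\overline\rho_n)-SS_m(y,\rho^\star)=O_P(1)$ together with $[SS_m(y,\rho^\star)]^{-1}=O_P(n^{-1})$. Your sandwich argument for the $O_P(1)$ difference is essentially the paper's Lemma~\ref{Lem:SSmY}; the quadratic expansion, the bounds $B_n=O_P(n)$ and $A_n=O_P(n^{1/2})$, and the role of the jump terms $\rho^\star(\mu_k^\star-\mu_{k-1}^\star)$ (the $\Delta^\star$ of Lemma~\ref{Lem:YZ}) are all identified correctly.

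There is, however, a genuine gap in your justification of $SS_m(y,\rho^\star)\ge cn$ with probability tending to one. The parenthetical ``evaluate at the true change-points and means'' yields an \emph{upper} bound on the minimum, not a lower bound: $SS_m(y,\rho^\star)=\min_{\boldsymbol t,\boldsymbol\delta}SS_m(y,\rho^\star,\boldsymbol\delta,\boldsymbol t)\le SS_m(y,\rho^\star,\boldsymbol\delta^\star,\boldsymbol t^\star)$. What you need is to control how much smaller the minimum can be. The paper does this by coupling $y$ to a process $z$ satisfying \eqref{eq:bkw} via Lemma~\ref{Lem:YZ}, showing $SS_m(y,\rho^\star)-SS_m(z,\rho^\star)=O_P(1)$ (Lemma~\ref{Lem:SmYZ}), invoking $[SS_m(z,\rho^\star)]^{-1}=O_P(n^{-1})$ from Lemma~\ref{Lem:SSmstar}, and transferring the rate via Lemma~\ref{Lem:OP1}. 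An alternative direct route would be to note that $SS_m(y,\rho^\star)=\|v^\star\|^2-\max_{\boldsymbol t}\|\pi_{E_{\boldsymbol t}}v^\star\|^2$ with $v^\star-\mathbb E v^\star=\epsilon$, and bound $\max_{\boldsymbol t}\|\pi_{E_{\boldsymbol t}}\epsilon\|^2$ by a union bound over the $O(n^m)$ segmentations (each projection is $\sigma^{\star2}\chi^2_{m+1}$), getting $O_P(\log n)$; combined with $\|\epsilon\|^2/n\to\sigma^{\star2}$ and $\|\mathbb E v^\star\|=O(n^{1/2})$ this gives the lower bound. Either way, the step requires a real argument, not evaluation at a single point.
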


The proofs of Propositions \ref{Prop:mBICBardet} and \ref{Prop:mBIC} are given in Appendix.

In practice, we propose to take $\overline{\rho}_n = \widetilde{\rho}_n$ 
which satisfies the condition of Proposition \ref{Prop:mBIC} 
to estimate the number of segments by
\begin{eqnarray} \label{Eq:BIC}
\widehat{m} & = & \argmax_m \left[- \left(\frac{n-m+1}{2} \right) \log SS_m(y, \widetilde{\rho}_n) + \log
\Gamma\left(\frac{n-m+1}{2}\right) \right. \nonumber \\
  & & \left. -\frac{1}{2} \sum_{k=0}^m \log n_k(\boldsymbol{\widehat{t}}(y, \widetilde{\rho}_n)) - m \log n \right],
\end{eqnarray}
where $SS_m(\cdot,\cdot)$ and $n_k(\cdot,\cdot)$ are defined in \eqref{Eq:SSm} and \eqref{eq:nk}, respectively.

\begin{remark}
Since the definition of the original mBIC criterion is intrinsically related to normality, we did not study precisely the quality of our approximation without the normality assumption.
\end{remark}


\section{Numerical experiments}\label{sec:simul}


\subsection{Practical implementation}\label{sec:post-proc}


Our decorrelation procedure introduces spurious change-points in the
series, at distance $1$ of the true change-points (see Figure
\ref{fig:exp_vs_exp_diff}, top). Since these artefacts may affect
our procedure, we propose a post-processing to the estimated
change-points $\boldsymbol{\widehat{t}_n}$, which consists in
removing segments of length 1:
\begin{equation}\label{eq:PT}
PP\left( \boldsymbol{\widehat{t}}_n \right) = \left\lbrace \widehat{t}_{n,k}\in \boldsymbol{\widehat{t}}_n \right\rbrace \setminus \left\lbrace \widehat{t}_{n,i} \textit{ such that } \widehat{t}_{n,i} = \widehat{t}_{n,i-1} +1 \textit{ and } \widehat{t}_{n,i+1} \neq \widehat{t}_{n,i} +1 \right\rbrace \; .
\end{equation}
This post-processing results in a smaller number of change-points. Figure \ref{fig:exp_vs_exp_diff} summarizes the whole processing.

\begin{figure}
\centering
\includegraphics[width=.75\textwidth]{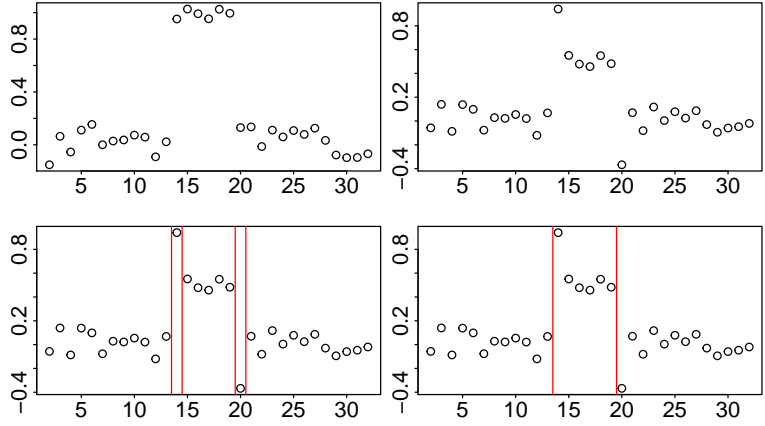}
\caption{\footnotesize{Top left: a series around two changes, with $\rho^{\star}=0.5$. Top right: the decorrelated series in the same region. Bottom left: before post-processing, two pairs of adjacent change-points are found. Bottom right: post-processing removes the last change-point of each pair of adjacent ones.}}
\label{fig:exp_vs_exp_diff}
\end{figure}

In practice, it may also be useful to have some guidance on how to check that the assumptions underpinning our approach are satisfied for a given data set. A possible approach is to subtract the estimated piecewise constant function from the original series. If the model is the expected one, this new series should be a realization of an AR(1) Gaussian process. Hence, the residuals built by decorrelation of this series should be Gaussian and independent. One way to check this is to perform a gaussianity test and a Portmanteau test on this series of residuals.


\subsection{Simulation design}\label{sec:design}

To assess the performance of the proposed method, we used a
simulation design inspired from the one conceived by \cite{simul}.
We considered series of length $n \in \{100, 200, 400, 800, 1600\}$
with autocorrelation at lag 1, denoted by $\rho^{\star}$, ranging from
$-.9$ to $.9$ (by steps of $.1$) and residual standard deviation
$\sigma^{\star}$ between $.1$ and $.6$ (by steps of $.1$). All series were
affected by $m^{\star} = 6$ change-points located at
fractions $1/6 \pm 1/36, 3/6 \pm 2/36, 5/6 \pm 3/36$
of their length. Each combination was replicated $S=100$ times.
The mean within each segment alternates between 0 and 1, starting with $\mu_1 = 0$.

\paragraph{\textbf{Estimation of $\rho^{\star}$.}}

For each generated series, two different estimates $\overline{\rho}_n$ of $\rho^{\star}$ were computed: the original estimate $\overline{\rho}_n = \widehat{\rho}_{\textrm{MG}}$ proposed by \cite{MG} and our revised version $\overline{\rho}_n = \widetilde{\rho}_n$.
We carried the same study on series with no change-point (centered series). 

\paragraph{\textbf{Estimation of the segmentation parameters.}}
%

For each generated series, we estimated the change-point locations
$\widehat{\boldsymbol{\tau}}_n(y,
\overline{\rho}_n)$ using Proposition \ref{Prop:Segment} for each
$m$ from $1$ to $m_{\max} =75$ and with different choices of $\overline{\rho}_n$:
$\widetilde{\rho}_n$ (our estimator), $\rho^{\star}$ (the true value) and
zero (which does not take into account for the autocorrelation). For
each choice of $\overline{\rho}_n$, we then selected the number of
change-points $\hat{m}$ using \eqref{Eq:BIC}. Actually, the last
choice $\overline{\rho}_n=0$ corresponds to the classical
least-squares framework. In addition, we shall also use the
post-processing described in Section \ref{sec:post-proc} for the
cases where $\overline{\rho}_n=\widetilde{\rho}_n$ and $\rho^{\star}$. \\
To study the quality of the proposed model
selection criterion, we computed the distribution of $\widehat{m}$
for each estimate $\overline{\rho}_n \in \{\widetilde{\rho}_n,
\rho^{\star}, 0\}$ with post-processing or not for the first two estimates
of $\rho^{\star}$.

In order to assess the performance of the estimation of the
change-point locations,  we computed the Hausdorff distance defined
in the segmentation framework as follows, see \cite{boysen:2009} and \cite{harchaoui:2010}:


\begin{equation}\label{eq:hausdorff}
d\left(\boldsymbol{\tau}^{\star} , \widehat{\boldsymbol{\tau}}_n\left(y,
\overline{\rho}_n\right)\right) = \max\left( d_1
\left(\boldsymbol{\tau}^{\star}, \widehat{\boldsymbol{\tau}}_n\left(y,
\overline{\rho}_n\right)\right),d_2 \left(\boldsymbol{\tau}^{\star},
\widehat{\boldsymbol{\tau}}_n\left(y, \overline{\rho}_n\right)\right)\right)\;,
\end{equation}
where
\begin{eqnarray}
d_1 \left(\boldsymbol{a},\boldsymbol{b}\right) & = & \underset{b\in\boldsymbol{b}}{\sup}
\underset{a\in\boldsymbol{a}}{\inf} \left\vert a - b \right\vert
\label{eq:hausd_1},  \\
 \text{and \qquad} d_2
\left(\boldsymbol{a},\boldsymbol{b}\right) & = & d_1
\left(\boldsymbol{b},\boldsymbol{a}\right).\label{eq:hausd_2}
\end{eqnarray}
$d_1$ close to zero means that an estimated change-point is likely to be close to a true change-point. A small value of $d_2$ means that a true change-point is likely to be close to each estimated change-point. A perfect segmentation results in both null $d_1$ and $d_2$. Over-segmentation results in a small $d_1$ and a large $d_2$. Under-segmentation results in a large $d_1$ and a small $d_2$, provided that the estimated change-points are correctly located.

\subsection{Results}

\paragraph{\textbf{Estimation of $\rho^{\star}$.}}

In Figure \ref{fig:rho_no_break}, we compare the performance of our
robust estimator of $\rho^{\star}$: $\widetilde{\rho}_n$ with the ones of
the estimator $\widehat{\rho}_{\textrm{MG}}$ in the case where there
are no change-points in the observations. More precisely, in
this case, the observations $y$ are generated under the
model (\ref{eq:modele_new}) with $\mu_k^{\star}=0$, for all $k$.  We
observe that the estimator proposed by \cite{MG} performs better than our robust estimator.
However, it is not the case anymore in the presence of change-points
in the data as we can see in Figure \ref{fig:rho_with_break}. In the
latter case, our robust estimator $\widetilde{\rho}_n$ outperforms
the estimator $\widehat{\rho}_{\textrm{MG}}$ for almost all values
of $\rho^{\star}$.





\begin{figure}[!h]
\centering
\includegraphics[width=0.7\textwidth]{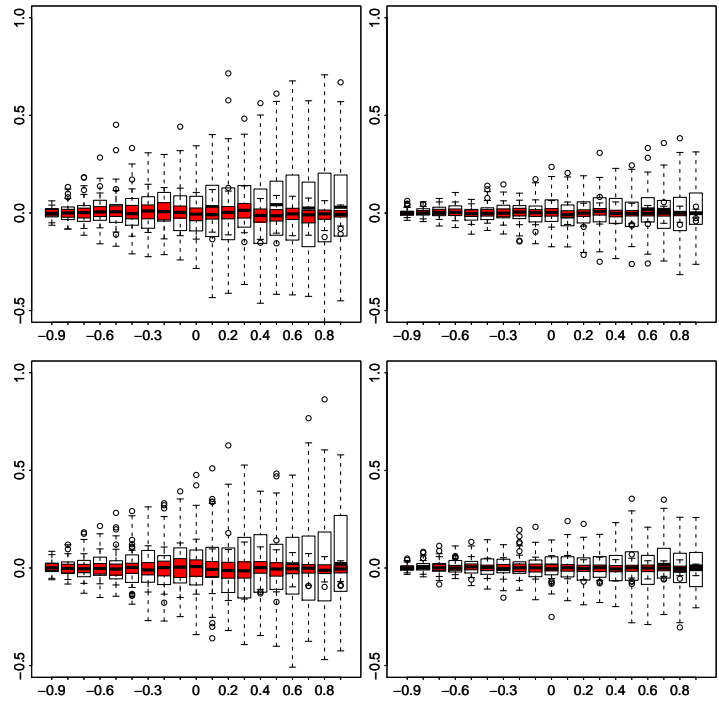}
\caption{\footnotesize{Boxplots of
$\widehat{\rho}_{\textrm{MG}}-\rho^{\star}$ in red and
$\widetilde{\rho}_n-\rho^{\star}$ in black for different values of
$\rho^{\star}$ in the case where there are no change-points in the data
with $n=400$ (plots on the left), $n=1600$ (plots on the right),
$\sigma^{\star}=0.2$ (top) and $\sigma^{\star}=0.6$ (bottom).}}
\label{fig:rho_no_break}
\end{figure}
\begin{figure}[!h]
\centering
\includegraphics[width=0.7\textwidth]{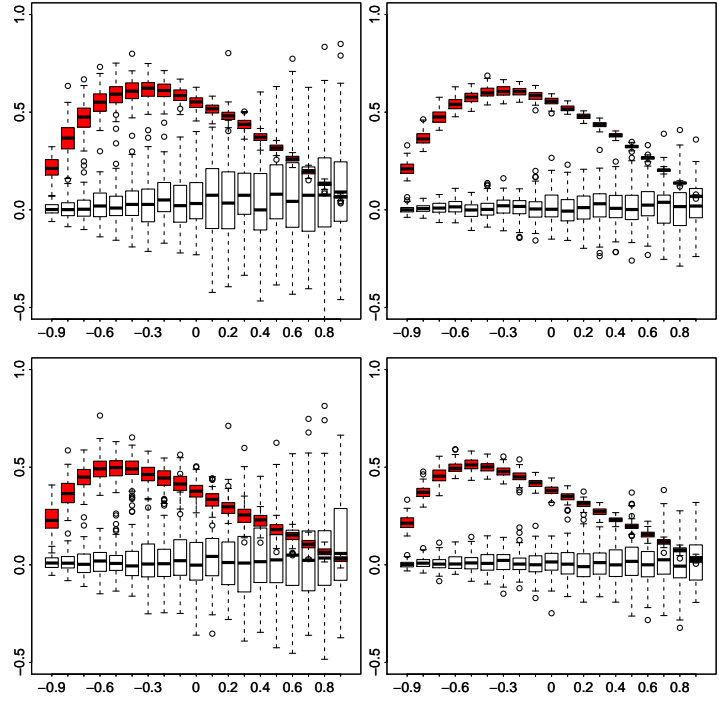}
\caption{\footnotesize{Boxplots of
$\widehat{\rho}_{\textrm{MG}}-\rho^{\star}$ in red and
$\widetilde{\rho}_n-\rho^{\star}$ in black for different values of
$\rho^{\star}$ in the case where there are change-points in the data with
$n=400$ (plots on the left), $n=1600$ (plots on the right),
$\sigma^{\star}=0.2$ (top) and $\sigma^{\star}=0.6$ (bottom).}}
\label{fig:rho_with_break}
\end{figure}

\paragraph{\textbf{Model selection.}}

In Figures \ref{fig:Kchap_sig01} and \ref{fig:Kchap_sig05}, we
compare the estimated number of change-points $\widehat{m}$ in
two different configurations of signal-to-noise ratio
($\sigma^{\star}=0.1$ and $\sigma^{\star}=0.5$) and with three different values
of $\rho^{\star}$ ($\rho^{\star}=0.3$, $0.6$ and $0.8$). In this figures, the
notation LS, Robust and Oracle correspond to the cases where
$\overline{\rho}_n=0$, $\overline{\rho}_n=\tilde{\rho}_n$ and
$\overline{\rho}_n=\rho^{\star}$, respectively. Moreover, we use the
notation -P when the post-processing described in Section \ref{sec:post-proc} is used.  In the situations where $\sigma^{\star}$ and
$\rho^{\star}$ are small, all the methods provide an accurate estimation
of the number of change-points. In the other cases,
LS tends to strongly overestimate the number of change-points.
Robust and Oracle tend to select twice the true number of change-points due to the artifactual
presence of change-points in the decorrelated series as explained in Section \ref{sec:post-proc}. This is corrected by the post-processing and Robust-P provides the correct number of change-points in most of the considered configurations.
Moreover, we also observe that the performance of Robust and
Robust-P are similar to these of Oracle and Oracle-P: the robust
decorrelation procedure we propose performs as well as if $\rho^{\star}$
was known for $n=1600$. It has to be noted that the post-processing would not
improve the performance on LS {so we did not considered it}.

\begin{figure}[!h]
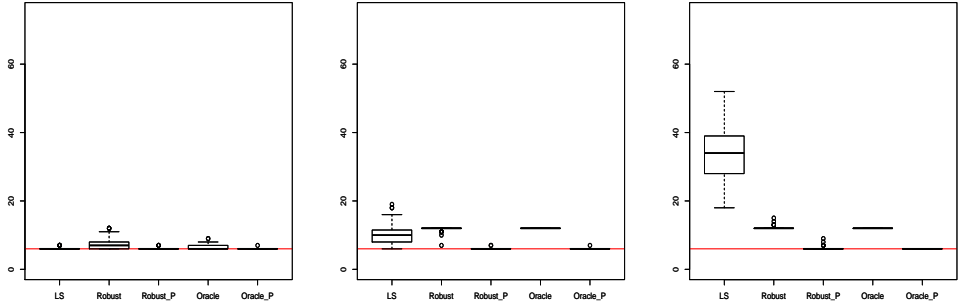

\includegraphics[width=0.32\textwidth,height=5cm]{boxplot_Kchap-n1600-sigma01-rho03.pdf}
\includegraphics[width=0.32\textwidth,height=5cm]{boxplot_Kchap-n1600-sigma01-rho06.pdf}
\includegraphics[width=0.32\textwidth,height=5cm]{boxplot_Kchap-n1600-sigma01-rho08.pdf}
\caption{\footnotesize{Boxplots for the estimated number of change-points for $n=1600$ when $\overline{\rho}_n=0$ (LS), $\overline{\rho}_n=\tilde{\rho}_n$ (Robust and
Robust-P with post-processing) and $\overline{\rho}_n=\rho^{\star}$ (Oracle and Oracle-P with post-processing)
with $\sigma^{\star}=0.1$ and $\rho^{\star}=0.3$ (left),  $\rho^{\star}=0.6$ (middle) and $\rho^{\star}=0.8$ (right). The true number of change-points is equal to 6 (red horizontal line).}}
\label{fig:Kchap_sig01}
\end{figure}

\begin{figure}[!h]
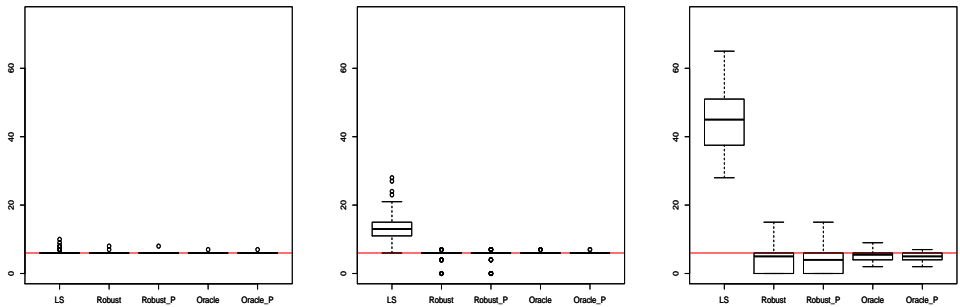

\includegraphics[width=0.32\textwidth,height=5cm]{boxplot_Kchap-n1600-sigma05-rho03.pdf}
\includegraphics[width=0.32\textwidth,height=5cm]{boxplot_Kchap-n1600-sigma05-rho06.pdf}
\includegraphics[width=0.32\textwidth,height=5cm]{boxplot_Kchap-n1600-sigma05-rho08.pdf}
\caption{\footnotesize{Boxplots for the estimated number of change-points for $n=1600$ when $\overline{\rho}_n=0$ (LS), $\overline{\rho}_n=\tilde{\rho}_n$ (Robust and
Robust-P with post-processing) and $\overline{\rho}_n=\rho^{\star}$ (Oracle and Oracle-P with post-processing)
with $\sigma^{\star}=0.5$ and $\rho^{\star}=0.3$ (left),  $\rho^{\star}=0.3$ (middle) and $\rho^{\star}=0.8$ (right). The true number of change-points is equal to 6 (red horizontal line).}}
\label{fig:Kchap_sig05}
\end{figure}

\paragraph{\textbf{Change-point locations.}}
In Figures \ref{fig:hausd_1_sig05} and \ref{fig:hausd_2_sig05} are
displayed the boxplots of the two parts $d_1$ and
$d_2$ of the Hausdorff distance
defined in (\ref{eq:hausd_1}) and (\ref{eq:hausd_2}), respectively for different values of $\rho^{\star}$ when
$\sigma^{\star}=0.5$. $d_2$ is displayed in Figure \ref{fig:hausd_2_sig01} for $\sigma^{\star}=0.1$; for this value of
$\sigma^{\star}$, $d_1$ was found null for all methods and all values of $\rho^{\star}$.


When the noise is small ($\sigma^{\star} = 0.1$), the robust procedure we propose performs well for the whole range of correlation. On the contrary, the performance of LS are deprecated when the correlation increases, whereas these of LS$^{\star}$ still provide accurate change-point locations. This shows that the least-square approach only fails because it turns to overestimate the number of change-points. 
This is all the more true for LS when the variance of the noise is
large ($\sigma^{\star} = 0.5$). When the problem
gets difficult (both $\sigma^{\star}$ and $\rho^{\star}$ large), our robust
procedure tends to underestimate the number of
change-points (which was expected) and the
estimated change-points are close to true ones.



\begin{figure}[!h]
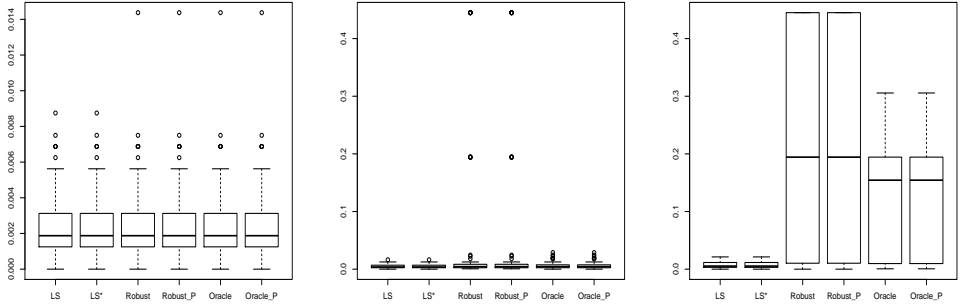

\includegraphics[width=0.32\textwidth,height=5cm]{boxplot_hausdorff_One-n1600-sigma05-rho03.pdf}
\includegraphics[width=0.32\textwidth,height=5cm]{boxplot_hausdorff_One-n1600-sigma05-rho06.pdf}
\includegraphics[width=0.32\textwidth,height=5cm]{boxplot_hausdorff_One-n1600-sigma05-rho08.pdf}
\caption{\footnotesize{Boxplots for the first part of the Hausdorff distance ($d_1$)  for $n=1600$ when $\overline{\rho}_n=0$ (LS and LS* when the true number of
change-points is known), $\overline{\rho}_n=\tilde{\rho}_n$ (Robust and
Robust-P with post-processing) and $\overline{\rho}_n=\rho^{\star}$ (Oracle and Oracle-P with post-processing)
with $\sigma^{\star}=0.5$ and $\rho^{\star}=0.3$ (left),  $\rho^{\star}=0.6$ (middle) and $\rho^{\star}=0.8$ (right).}}
\label{fig:hausd_1_sig05}
\end{figure}

\begin{figure}[!h]
\includegraphics[width=0.32\textwidth,height=5cm]{boxplot_hausdorff_Two-n1600-sigma01-rho03.pdf}
\includegraphics[width=0.32\textwidth,height=5cm]{boxplot_hausdorff_Two-n1600-sigma01-rho06.pdf}
\includegraphics[width=0.32\textwidth,height=5cm]{boxplot_hausdorff_Two-n1600-sigma01-rho08.pdf}
\caption{\footnotesize{Boxplots for the second part of the Hausdorff distance ($d_2$) for $n=1600$ when $\overline{\rho}_n=0$ (LS and LS* when the true number of
change-points is known), $\overline{\rho}_n=\tilde{\rho}_n$ (Robust and
Robust-P with post-processing) and $\overline{\rho}_n=\rho^{\star}$ (Oracle and Oracle-P with post-processing)
with $\sigma^{\star}=0.1$ and $\rho^{\star}=0.3$ (left),  $\rho^{\star}=0.6$ (middle) and $\rho^{\star}=0.8$ (right).}}
\label{fig:hausd_2_sig01}
\end{figure}

\begin{figure}[!h]
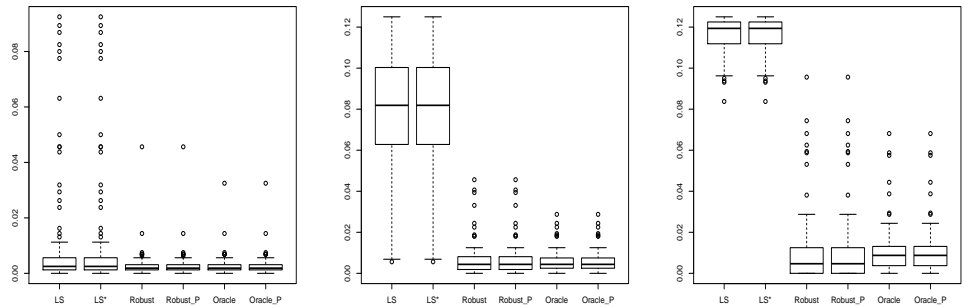

\includegraphics[width=0.32\textwidth,height=5cm]{boxplot_hausdorff_Two-n1600-sigma05-rho03.pdf}
\includegraphics[width=0.32\textwidth,height=5cm]{boxplot_hausdorff_Two-n1600-sigma05-rho06.pdf}
\includegraphics[width=0.32\textwidth,height=5cm]{boxplot_hausdorff_Two-n1600-sigma05-rho08.pdf}
\caption{\footnotesize{Boxplots for the second part of the Hausdorff distance ($d_2$) when $\overline{\rho}_n=0$ (LS and LS* when the true number of
change-points is known), $\overline{\rho}_n=\tilde{\rho}_n$ (Robust and
Robust-P with post-processing) and $\overline{\rho}_n=\rho^{\star}$ (Oracle and Oracle-P with post-processing)
with $\sigma^{\star}=0.5$ and $\rho^{\star}=0.3$ (left),  $\rho^{\star}=0.6$ (middle) and $\rho^{\star}=0.8$ (right).}}
\label{fig:hausd_2_sig05}
\end{figure}

An other way to illustrate the performance of the estimation of the change-point locations is the histograms of these estimates. We provide these plots only for LS, Robust-P and Oracle-P, because Post-processing does not change significantly LS estimates, and, furthermore, Robust (resp. Oracle) method's histograms with or without Post-Processing are very similar, see Figures \ref{fig:densplot1} and \ref{fig:densplot5}.

These figures illustrate that in case of over-estimation of the number of changes by LS method, the additional change-points seem to be uniformly distributed.

\begin{figure}[!h]
\includegraphics[width=0.9\textwidth,height=6cm]{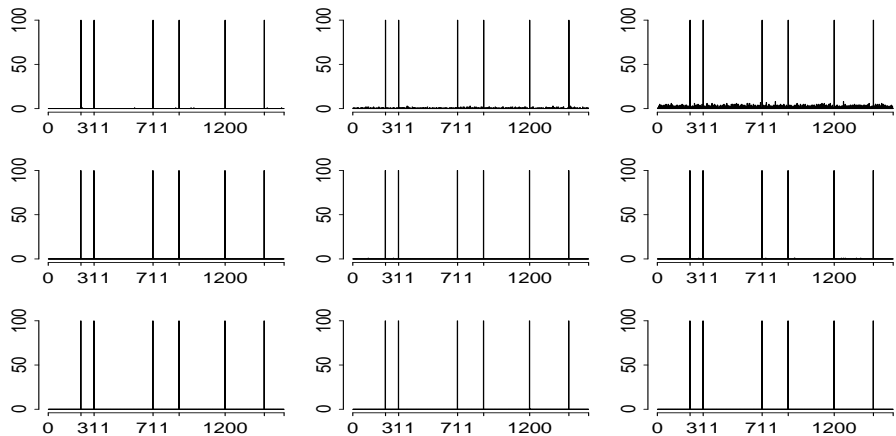}
\caption{\footnotesize{Frequencies of each possible change-point estimator, with $\sigma^{\star}=0.1$ and $n=1600$. Tick-marks on bottom-side axis represent the true change-point locations. $\overline{\rho}_n=0$ (LS, top line), $\overline{\rho}_n=\tilde{\rho}_n$ (Robust-P, middle line) and $\overline{\rho}_n=\rho^{\star}$ (Oracle-P, bottom line) with $\rho^{\star}=0.3$ (left),  $\rho^{\star}=0.6$ (middle) and $\rho^{\star}=0.8$ (right).}}
\label{fig:densplot1}
\end{figure}

\begin{figure}[!h]
\includegraphics[width=0.9\textwidth,height=6cm]{aggregation_n_1600_sigma_5.pdf}
\caption{\footnotesize{Frequencies of each possible change-point estimator, with $\sigma^{\star}=0.5$ and $n=1600$. Tick-marks on bottom-side axis represent the true change-point locations.$\overline{\rho}_n=0$ (LS, top line), $\overline{\rho}_n=\tilde{\rho}_n$ (Robust-P, middle line) and $\overline{\rho}_n=\rho^{\star}$ (Oracle-P, bottom line) with $\rho^{\star}=0.3$ (left),  $\rho^{\star}=0.6$ (middle) and $\rho^{\star}=0.8$ (right).}}
\label{fig:densplot5}
\end{figure}

\subsection{Additional simulation studies}\label{sec:add_simul}
\subsubsection{Comparison with Bardet et al. \cite{BKW10}}
The quasi-maximum likelihood method proposed by \cite{BKW10}, when applied to a Gaussian AR(1) process with changes in the mean $\left(y_0,\dots ,y_n\right)$ , consists in the minimization wrt $\boldsymbol{\rho}=\left(\rho_0,\dots ,\rho_m\right), \boldsymbol{\sigma}=\left(\sigma_0,\dots ,\sigma_m\right), \boldsymbol{\delta}=\left(\delta_0,\dots ,\delta_m\right)$ and $\boldsymbol{t}=\left(t_0,\dots ,t_m\right)$ of the following function:
\begin{equation}\label{eq:bardetFunc}
\left(\boldsymbol{\rho}, \boldsymbol{\sigma} , \boldsymbol{\delta},\boldsymbol{t}\right) \mapsto \sum_{k=0}^m \left\lbrace \left(t_{k+1}-t_k\right)\log\left(\sigma_k^2\right)+\frac{1}{\sigma_k^2}\sum_{i=t_k +1}^{t_{k+1}}\left(y_i-\rho_k y_{i-1}- \delta_k\right)^2 \right\rbrace \; .
\end{equation}
Indeed, in the class of models considered in \cite{BKW10}, changes in all the parameters are possible at each change-point. Using this method to estimate the change-point locations for data satisfying Model \eqref{eq:modele_new} or \eqref{eq:bkw} boils down to ignore the stationarity of $\left(\eta_i\right)_{i\geq 0}$ as defined in \eqref{eq:ar1}. It can lead to a poor estimation of change-point locations, especially when there are many changes close to each other.
To illustrate this fact, we compared our estimator of change-point locations to the estimates given by the minimization of \eqref{eq:bardetFunc}. We generated $100$ series of length $400$, under Model \eqref{eq:modele_new}, with $\rho^{\star}=0.3$ and $\sigma^{\star}=0.4$. The number of change-points, their locations and the means within segments are the same as in Section \ref{sec:design}. The number of changes is assumed to be known and we did not post-process the estimates. Simulations show that using the method of \cite{BKW10} in this case can lead to a poor estimation of close change-points, while our method is less affected by the length of segments (see Figure \ref{fig:BardetVsUs}). For example, the boundaries of the smallest segment are recovered in less than half of the simulations when minimizing \eqref{eq:bardetFunc}.
\begin{figure}
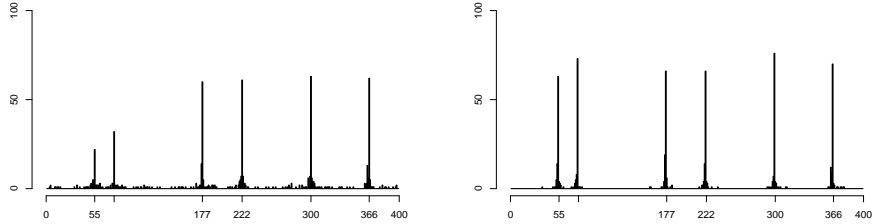

\centering
\includegraphics[width=.45\textwidth]{PlotBardet.pdf}
\includegraphics[width=.45\textwidth]{PlotNous.pdf}
\caption{\footnotesize{Frequencies of each possible change-point location estimate. Tick-marks on bottom-side axis represent the true change-point locations. Left: Estimation by the minimization of \eqref{eq:bardetFunc}. Right: Our method.}} \label{fig:BardetVsUs}
\end{figure}
\subsubsection{Robustness to model mis-specification}\label{sec:AR2} 
In this section, we study the
behaviour of our proposed robust procedure (Robust-P) 
when the signal is corrupted by an AR(2) Gaussian
process, e.g. in Model \ref{eq:modele_new}, $\eta_i$ is a zero-mean
stationary process such that
$$
\eta_i = \phi_1^{\star} \eta_{i-1}+\phi_2^{\star} \eta_{i-2}+ \varepsilon_i ,
$$
where $\vert \phi_2 \vert < 1$, $\phi_1+\phi_2<1$ and
$\phi_2-\phi_1<1$. We considered series of fixed length $n=1600$, a
residual standard deviation $\sigma^{\star}=0.1$, $\phi_1^{\star}=0.3$ and
$\phi_2^{\star}$ in $\{-0.9,-0.8,-0.7,\dots,0.5,0.6\}$ 
We used the same segmentation design as in
subsection \ref{sec:post-proc}. Each combination was
replicated $100$ times. All the results are
displayed in Figure \ref{fig:K-rho-h1-h2-AR2}. \\
The procedure performs well when $\phi_2^{\star}$ belongs to the interval
$[-0.5,0.2]$ as expected (similar to the case of AR(1)): the
estimated segmentation is close to the true one. When
$\phi_2^{\star}>0.2$, it tends to over-estimate the number of
change-points. The true change-points are detected ($d_1$ is close
to zero, e.g. the decorrelation procedure with the obtained negative
estimation of ${\rho}^{\star}$ leads to an increasing in the mean
differences) but false change-points are added (large $d_2$). When
$\phi_2^{\star}<-0.5$, under-segmentation is observed: the decorrelation
procedure with a large estimated value of ${\rho}^{\star}$ leads to a
difficult segmentation problem.

\begin{figure}
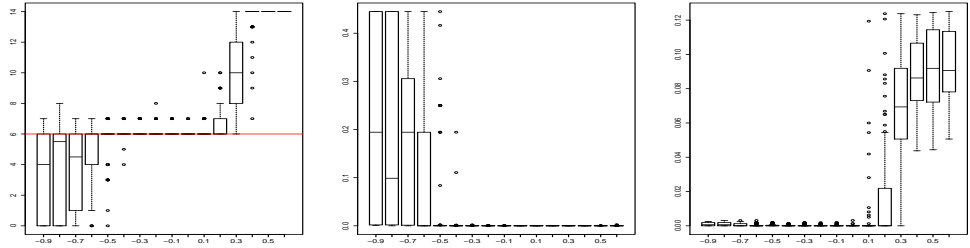

\centering
\includegraphics[width=.32\textwidth, height=0.2\textheight]{Nbrupt.pdf}
\includegraphics[width=.32\textwidth, height=0.2\textheight]{h1.pdf}
\includegraphics[width=.32\textwidth, height=0.2\textheight]{h2.pdf}
\caption{\footnotesize{Left: Boxplots for the estimated number
of change-points. 
Center and right: Boxplots for the first part of the Hausdorff distance ($d_1$) and for the second part of the Hausdorff distance ($d_2$) with $n=1600$, $\sigma^{\star}=0.1$ and $\phi_1^{\star}=0.3$ wrt different values of $\phi_2^{\star}$.}} \label{fig:K-rho-h1-h2-AR2}
\end{figure}

\subsubsection{Estimator of $\rho^{\star}$ in the case of the Cauchy distribution}
In Section \ref{sec:correlation}, an analogous estimator of $\rho^{\star}$ in the case of Cauchy distributed observations is proposed. 
We follow the simulation design described in Subsection \ref{sec:design}, where the Gaussian random variables are replaced by
Cauchy random variables. More precisely, the expectation parameters are replaced by the location parameters of the Cauchy 
distribution and $\sigma^{\star}$ is replaced by the scale parameter of the Cauchy distribution. We can see from Figure \ref{fig:Cauchy} that $\widetilde{\widetilde{\rho}}_n$ is an accurate estimator of $\rho^{\star}$ except when $\rho^{\star}$ is close to zero.
\begin{figure}
\centering
\includegraphics[width=.4\textwidth]{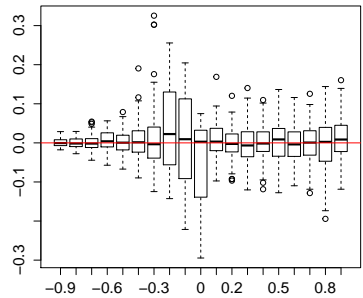}
\caption{\footnotesize{Boxplots of $\widetilde{\widetilde{\rho}}_n-\rho^{\star}$ for different values of $\rho^{\star}$  
when $n=1600$ and $\sigma^{\star}=0.1$.}} \label{fig:Cauchy}
\end{figure}
When this estimator of $\rho^{\star}$ is used in our change-point estimation method, it leads to poor estimations of the change-points
since the Cauchy distribution does not have finite second order moment (simulations not shown).




\section{Conclusion}

In this paper, we propose a novel approach for estimating multiple change-points
in the mean of a Gaussian AR(1) process. Our approach is based on two main stages.
The first one consists in building a robust estimator of the autocorrelation parameter
which is used for whitening the original series. In the second stage, we apply
the inference approach commonly used to estimate change-points in the mean
of independent random variables. 
In the course of this study, we have shown that our approach, which is implemented in the R 
package \textsf{AR1seg}, is a very efficient
technique both on a theoretical and practical point of view.
More precisely, it has two main features which make it very
attractive. Firstly, the estimators that we propose have the same asymptotic properties as the classical estimators 
in the independent framework which means that the performances of our estimators are not affected by the dependence assumption.
Secondly, from a practical point of view, \textsf{AR1seg} is computationally
efficient and exhibits better performance on finite sample size data than 
existing approaches which do not take into account the dependence structure of the observations.


%

\bibliographystyle{apalike}
\bibliography{bibliographie1}

\newpage

\appendix

\section{Proofs} \label{App:Proof}

\subsection{Proof of Proposition \ref{prop:correlation}}

Let $F_1$ and $F_2$ denote the cumulative distribution functions (cdf) of 
$\left(\left|y_{i+1}-y_i\right|\right)$ for $i\neq t_{n,1}^{\star},\dots,t_{n,m^{\star}}^{\star}$ and $\left(|y_{i+2}-y_i|\right)$
for $i\neq t_{n,1}^{\star}-1,\dots,t_{n,m^{\star}}^{\star}-1$,
respectively. By  (\ref{eq:modele_new}), 
$(y_i-\PE(y_i))_{0\leq i\leq n}$ are $(n+1)$ observations of a AR(1) stationnary Gaussian process thus for any $i\neq t_{n,1}^{\star},\dots,t_{n,m^{\star}}^{\star}$, 
$(y_{i+1}-y_i)$ and for any $i\neq t_{n,1}^{\star}-1,\dots,t_{n,m^{\star}}^{\star}-1$, $(y_{i+2}-y_i)$ are 
zero-mean Gaussian random variables with variances equal to
$2\sigma^{\star 2}/(1+\rho^{\star})$ and $2\sigma^{\star 2}$, respectively.
Hence, for all $t$ in $\rset$,
\begin{equation}\label{eq:def:F1:F2}
F_1:t\mapsto 2\Phi\left(t\sqrt{\frac{1+\rho^{\star}}{2\sigma^{\star 2}}}\right)-1
\textrm{ and } F_2:t\mapsto 2\Phi\left(t\sqrt{\frac{1}{2\sigma^{\star 2}}}\right)-1\;,
\end{equation}
where $\Phi$ denotes the cumulative distribution function of a standard Gaussian random variable.

Let also denote by $F_{1,n}$ and $F_{2,n-1}$ the empirical cumulative distribution functions of
$\left(|y_{i+1}-y_i|\right)_{0\leq i\leq n-1}$ and $\left(|y_{i+2}-y_i|\right)_{0\leq i\leq n-2}$,
respectively. Observe that for all $t$ in $\rset$,
\begin{multline}\label{eq:cdf1}
\sqrt{n}(F_{1,n}(t)-F_1(t))=\frac{1}{\sqrt{n}}\sum_{i=0}^{n-1} \left(\1_{\{|y_{i+1}-y_i|\leq t\}}-F_1(t)\right)\\
=\frac{1}{\sqrt{n}}\sum_{i\in \{t_{n,1}^{\star},\dots,t_{n,m^{\star}}^{\star}\}}\left(\1_{\{|y_{i+1}-y_i|\leq t\}}-F_1(t)\right)
+\frac{1}{\sqrt{n}}\sum_{\stackrel{0\leq i\leq n-1}{i \notin \{t_{n,1}^{\star},\dots,t_{n,m^{\star}}^{\star}\}}} \left(\1_{\{|y_{i+1}-y_i|\leq t\}}-F_1(t)\right)\\
=\frac{1}{\sqrt{n}}\sum_{0\leq i\leq n-1} \left(\1_{\{|z_i|\leq t\}}-F_1(t)\right)+R_n(t)\;,
\end{multline}
where $\sup_{t\in\rset} |R_n(t)|=o_p(1)$, the $z_i=y_{i+1}-y_i$ except for $i =t_{n,1}^{\star},\dots,t_{n,m^{\star}}^{\star}$, where
$z_i=\eta_{i+1}-\eta_i$, $(\eta_i)$ being defined in (\ref{eq:ar1}).

Thus, by using the theorem of \cite{CM}, we obtain that the first term in the rhs of (\ref{eq:cdf1}) 
converges in distribution to a zero-mean Gaussian process $G$ in the
space of c\`adl\`ag functions equipped with the uniform norm. Since the second term in the rhs 
tends uniformly to zero in probability, we get that $\sqrt{n}(F_{1,n}-F_1)$ converges in distribution to a zero-mean Gaussian process in the
space of c\`adl\`ag functions equipped with the uniform norm
and that the same holds for $\sqrt{n-1}(F_{2,n-1}-F_2)$.

By Lemma 21.3 of \cite{van} the quantile function $T: F\mapsto F^{-1}(1/2)$
is Hadamard differentiable at $F$ tangentially to the set of c\`adl\`ag functions $h$ that are continuous at
$F^{-1}(1/2)$ with derivative $T'_F(h)=-h(F^{-1}(1/2))/F'(F^{-1}(1/2))$.
By applying the functional delta method (Theorem 20.8 in \cite{van}), we get that
$\sqrt{n}(T(F_{1,n})-T(F_1))$ converges in distribution to $T'_{F_1}(G)$. Moreover, by the 
continuous mapping theorem, it is the same for $T'_{F_1}\left\{\sqrt{n}(F_{1,n}-F_1)\right\}$. Thus,
\begin{multline}\label{eq:expansion_F1}
\sqrt{n}\left(F^{-1}_{1,n}(1/2)-F_1^{-1}(1/2)\right)
=T'_{F_1}\left\{\sqrt{n}(F_{1,n}-F_1)\right\}+o_p(1)\\
=-\frac{1}{\sqrt{n}}\frac{\sum_{i=0}^{n-1}\left(\1_{\{|y_{i+1}-y_i|\leq F_1^{-1}(1/2)\}}-1/2\right)}
{F_1'(F_1^{-1}(1/2))}+o_p(1)\;.
\end{multline}
In the same way,
\begin{multline}\label{eq:expansion_F2}
\sqrt{n-1}\left(F^{-1}_{2,n-1}(1/2)-F_2^{-1}(1/2)\right)=\\
-\frac{1}{\sqrt{n-1}}\frac{\sum_{i=0}^{n-2}\left(\1_{\{|y_{i+2}-y_i|\leq F_2^{-1}(1/2)\}}-1/2\right)}
{F_2'(F_2^{-1}(1/2))}+o_p(1)\;,
\end{multline}

By applying the Delta method \cite[Theorem 3.1]{van} with the transformation $f(x)=x^2$, we get
\begin{multline}\label{eq:expansion_F1_square}
\sqrt{n}\left(F^{-1}_{1,n}(1/2)^2-F_1^{-1}(1/2)^2\right)=\\
-\frac{2 F_1^{-1}(1/2)}{\sqrt{n}}\frac{\sum_{i=0}^{n-1}\left(\1_{\{|y_{i+1}-y_i|\leq F_1^{-1}(1/2)\}}-1/2\right)}
{F_1'(F_1^{-1}(1/2))}+o_p(1)\;,
\end{multline}
\begin{multline}\label{eq:expansion_F2_square}
\sqrt{n-1}\left(F^{-1}_{2,n-1}(1/2)^2-F_2^{-1}(1/2)^2\right)=\\
-\frac{2 F_2^{-1}(1/2)}{\sqrt{n-1}}\frac{\sum_{i=0}^{n-2}\left(\1_{\{|y_{i+2}-y_i|\leq F_2^{-1}(1/2)\}}-1/2\right)}
{F_2'(F_2^{-1}(1/2))}+o_p(1)\;,
\end{multline}
Note that by (\ref{eq:def:F1:F2}), we obtain that
\begin{equation}\label{eq:F1-1:F2-1}
F_1^{-1}(1/2)=\sqrt{\frac{2\sigma^{\star 2}}{1+\rho^{\star}}}\Phi^{-1}(3/4) \textrm{ and } F_2^{-1}(1/2)=\sqrt{2\sigma^{\star 2}}\Phi^{-1}(3/4)\;.
\end{equation}
Moreover,
\begin{equation}\label{eq:F1prime:F2prime}
F_1'(F_1^{-1}(1/2))=2\sqrt{\frac{1+\rho^{\star}}{2\sigma^{\star 2}}}\varphi\left(\Phi^{-1}(3/4)\right)   
\textrm{ and } F_2'(F_2^{-1}(1/2))=2\sqrt{\frac{1}{2\sigma^{\star 2}}}\varphi\left(\Phi^{-1}(3/4)\right)\;,
\end{equation}
where $\varphi$ denotes the p.d.f of a standard Gaussian random variable.

Observe that $\sqrt{n}(\widetilde{\rho}_n - \rho^{\star})$ can be rewritten as follows:
\begin{multline}\label{eq:expansion_rhotilde_1}
\sqrt{n}(\widetilde{\rho}_n - \rho^{\star})=\sqrt{n}\;\frac{F^{-1}_{2,n}(1/2)^2-(1+\rho^{\star}) F^{-1}_{1,n}(1/2)^2}{F^{-1}_{1,n}(1/2)^2}\\
=\sqrt{n}\;\frac{\left(F^{-1}_{2,n-1}(1/2)^2-F_2^{-1}(1/2)^2\right) -(1+\rho^{\star}) \left(F^{-1}_{1,n}(1/2)^2-F_1^{-1}(1/2)^2\right)}{F^{-1}_{1,n}(1/2)^2}\\
+\sqrt{n}\;\frac{F_2^{-1}(1/2)^2-(1+\rho^{\star})F_1^{-1}(1/2)^2}{F^{-1}_{1,n}(1/2)^2}\;.
\end{multline}
By (\ref{eq:F1-1:F2-1}) the last term in the rhs of (\ref{eq:expansion_rhotilde_1}) is equal to zero. Thus,
\begin{multline*}
\sqrt{n}(\widetilde{\rho}_n - \rho^{\star})=\\
\frac{1}{\sqrt{n-1}}\sum_{i=0}^{n-2} \left\{a_2\left(\1_{\{|y_{i+2}-y_i|\leq F_2^{-1}(1/2)\}}-1/2\right)
-a_1 (1+\rho^{\star})\left(\1_{\{|y_{i+1}-y_i|\leq F_1^{-1}(1/2)\}}-1/2\right)\right\}+o_p(1)\;,
\end{multline*}
where, by (\ref{eq:F1prime:F2prime}), 
\begin{eqnarray*}
a_2 & = &-\frac{2 F_2^{-1}(1/2)}{F_2'(F_2^{-1}(1/2))}=-2\sigma^{\star 2}\frac{\Phi^{-1}(3/4)}{\varphi\left(\Phi^{-1}(3/4)\right)}\\
\textrm{ and } a_1 & = & -\frac{2F_1^{-1}(1/2)}{F_1'(F_1^{-1}(1/2))}=-\frac{2\sigma^{\star 2}}{1+\rho^{\star}}\frac{\Phi^{-1}(3/4)}{\varphi\left(\Phi^{-1}(3/4)\right)}  \;.
\end{eqnarray*}
By (\ref{eq:F1-1:F2-1}), $\sqrt{n}(\widetilde{\rho}_n - \rho^{\star})$ can thus be rewritten as follows:
\begin{equation*}
\sqrt{n}(\widetilde{\rho}_n - \rho^{\star})=\frac{1}{\sqrt{n-1}}\sum_{0\leq i\leq n-2}
\Psi(\eta_i,\eta_{i+1},\eta_{i+2})+o_p(1)\;.
\end{equation*}
where $\Psi$ is defined in (\ref{eq:def_Psi}) and 
$(\eta_i)$ is defined in (\ref{eq:ar1}).
Since $\Psi$ is a function on $\rset^3$ with Hermite rank greater than 1 and $(\eta_i)_{i\geq 0}$ is a stationary AR(1) Gaussian process,
(\ref{eq:tcl_rho_tilde}) follows by applying \cite[Theorem 4]{arcones1994limit}.

\subsection{Hints for \eqref{eq:rho_cauchy}}\label{subsec:hints}
Note that if $X$ has a Cauchy($x_0$,$\gamma$) distribution then the characteristic function $\varphi_X$
of $X$ can be written as
$
\varphi_X(t)=\rme^{\rmi x_0 t-\gamma |t|}.
$
Moreover, the cdf $F_X$ of $X$ is such that $F_X^{-1}(3/4)=x_0+\gamma$.
Thus, $\eta_i=\sum_{k\geq 0} (\rho^{\star})^k \varepsilon_{i-k}$ has a 
Cauchy$\left(\frac{x_0}{1-\rho^{\star}},\frac{\gamma}{1-|\rho^{\star}|}\right)$ distribution and
$(\rho^{\star} -1)\eta_i$ has a Cauchy$\left(-x_0,\frac{\gamma|\rho^{\star} -1|}{1-|\rho^{\star}|}\right)$ distribution.
Since $\eta_{i+1}-\eta_i=(\rho^{\star} -1)\eta_i+\varepsilon_i$ is a sum of two independent Cauchy random variables,
it is distributed as a Cauchy$\left(0,\gamma\left(1+\left|\frac{\rho^{\star}-1}{1-|\rho^{\star}|}\right|\right)\right)$ distribution.
In the same way, 
$\eta_{i+2}-\eta_i=({\rho^{\star}}^2 -1)\eta_i+\rho^{\star}\varepsilon_i+\varepsilon_{i+2}$ is a sum 
of three independent Cauchy random variables and has thus a Cauchy$\left(0,2\gamma(1+|\rho^{\star}|)\right)$.
Let $F_1$ and $F_2$ denote the cdf of $(\eta_{i+1}-\eta_i)$ and $(\eta_{i+2}-\eta_i)$, respectively.
By using the properties of the cdf of a Cauchy distribution, we get, on the one hand, that
$F_2^{-1}(3/4)=2\gamma(1+|\rho^{\star}|)$ and, on the other hand, that
$$
F_1^{-1}(3/4)=
\left\{
\begin{array}{cc}
2\gamma,&\textrm{if } \rho^{\star}>0\;,\\
\frac{2\gamma}{1+\rho^{\star}},&\textrm{if } \rho^{\star}<0\;.
\end{array}
\right.
$$
From this we get that
\begin{equation}\label{eq:lim_F2_F1}
\left(\frac{F_2^{-1}(3/4)}{F_1^{-1}(3/4)}\right)^2-1=
\left\{
\begin{array}{cc}
\rho^{\star}(2+\rho^{\star}),&\textrm{if } \rho^{\star}>0\;,\\
{\rho^{\star}}^2({\rho^{\star}}^2-2),&\textrm{if } \rho^{\star}<0\;.
\end{array}
\right.
\end{equation}
The definition of $\widetilde{\widetilde{\rho}}_n$ comes by inverting
these last two functions.


\subsection{Proof of Proposition \ref{Prop:Segment}}\label{subsec:prop:Segment}

In the sequel, we need the following definitions, notations and remarks. Observe that \eqref{eq:bkw} can be rewritten as follows:
\begin{equation}\label{eq:modele_matriciel}
z = \rho^* Bz + T\left(\boldsymbol{t}_n^{\star}\right) \boldsymbol{\delta}^{\star} + \epsilon\;,
\end{equation}
where
\begin{equation}\label{eq:YXE}
z = \left( \begin{array}{c} z_1 \\\vdots \\z_{n} \end{array}\right)\;,
\qquad
Bz = \left( \begin{array}{c} z_0 \\ \vdots \\ z_{n-1} \end{array}\right)\;, 
\qquad
\boldsymbol{\delta}^{\star} = \left( \begin{array}{c} \delta^{\star}_0 \\ \vdots \\ \delta^{\star}_{m} \end{array}\right)\;, 
\qquad 
\epsilon = \left( \begin{array}{c} \epsilon_1 \\ \vdots \\ \epsilon_n \end{array}\right)\;,
\end{equation}
where $\delta_k^{\star} = (1-\rho^{\star}) \mu^{\star}_k$, for $0 \leq k \leq m$,
and $T\left(\boldsymbol{t}\right)$ is an $n \times (m+1)$ matrix where the $k$th column is $( \underset{t_{k-1}}{\underbrace{0,\dots , 0}} \; \underset{t_k-t_{k-1}}{\underbrace{1,\dots ,1}} \; \underset{n - t_k}{\underbrace{0,\dots ,0}} )'$.
Let us define the exact and estimated decorrelated series by
\begin{eqnarray}\label{eq:decor}
w^{\star} & = & z - \rho^{\star} Bz\;, \\
\overline{w} & = & z - \overline{\rho}_n Bz\label{eq:overline_x}\;.
\end{eqnarray}
For any vector subspace $E$ of $\mathbb{R}^{n}$, let $\pi_E$ denote the orthogonal projection of $\mathbb{R}^{n}$ on $E$. 
Let also $\Vert \cdot \Vert$ be the euclidian norm on $\mathbb{R}^{n}$, $\langle \cdot , \cdot \rangle$ the canonical scalar product on  $\mathbb{R}^{n}$ and 
$\Vert \cdot \Vert_{\infty}$ the sup norm. 
For $x$ a vector of $\rset^n$ and $\boldsymbol{t}\in\mathcal{A}_{n,m}$, let
\begin{equation}\label{eq:Jnm}
J_{n,m}\left(x,\boldsymbol{t}\right)
= \frac{1}{n} \left( \Vert \pi_{E_{ \boldsymbol{t}_n^{\star} } }\left( x\right) \Vert^2 - \Vert \pi_{ E_{\boldsymbol{t}}} \left( x \right) \Vert^2 \right)\;,
\end{equation}
written $J_n\left(x,\boldsymbol{t}\right)$ in the sequel for notational simplicity. In \eqref{eq:Jnm}, $E_{\boldsymbol{t}_n^{\star}}$ and $E_{\boldsymbol{t}}$ correspond to the
linear subspaces of $\mathbb{R}^{n}$ generated by the columns of $T\left(\boldsymbol{t}_n^{\star}\right)$ and $T\left(\boldsymbol{t}\right)$, respectively. We shall use the same decomposition
as the one introduced in \cite{LM}:
\begin{equation}\label{eq:Jn_decomp}
J_n\left(x,\boldsymbol{t}\right) =  K_n\left(x,\boldsymbol{t}\right) + V_n\left(x,\boldsymbol{t}\right) + W_n\left(x,\boldsymbol{t}\right)\;,
\end{equation}
where
\begin{eqnarray}
K_n\left(x,\boldsymbol{t}\right) & = & \frac{1}{n}\left\Vert \left(\pi_{E_{\boldsymbol{t}_n^{\star}}}- \pi_{E_{\boldsymbol{t}}}\right)\mathbb{E}x\right\Vert^2\;, \label{eq:Kn} \\ 
V_n\left(x,\boldsymbol{t}\right) & = & \frac{1}{n}\left(\left\Vert \pi_{E_{\boldsymbol{t}_n^{\star}}} \left( x-\mathbb{E}x\right)\right\Vert^2 - \left\Vert \pi_{E_{\boldsymbol{t}}} \left( x-\mathbb{E}x\right)\right\Vert^2\right)\;, \label{eq:Vn}\\
W_n\left(x,\boldsymbol{t}\right) & = & \frac{2}{n}\left( \left\langle \pi_{E_{\boldsymbol{t}_n^{\star}}} \left( x-\mathbb{E}x\right), \pi_{E_{\boldsymbol{t}_n^{\star}}} \left( \mathbb{E}x \right)\right\rangle - \left\langle \pi_{E_{\boldsymbol{t}}} \left( x-\mathbb{E}x\right), \pi_{E_{\boldsymbol{t}}} \left( \mathbb{E}x \right)\right\rangle  \right)\;. \label{eq:Wn}
\end{eqnarray}
We shall also use the following notations:
\begin{eqnarray}
\underline{\lambda} & = & \underset{1\leq k\leq m}{\min} \left\vert \delta_k^{\star}-\delta_{k-1}^{\star} \right\vert\;,\label{eq:underline_lambda}\\
\overline{\lambda} & = & \underset{1\leq k\leq m}{\max} \left\vert \delta_k^{\star}-\delta_{k-1}^{\star} \right\vert\;,\label{eq:overline_lambda}\\
\Delta_{\boldsymbol{\tau}^{\star}} & = & \underset{1\leq k\leq m+1}{\min} \left(\tau_k^{\star} - \tau_{k-1}^{\star}\right)\;,\label{eq:Delta_tau*}\\
\mathcal{C}_{\nu, \gamma, n, m} & = & \left\lbrace \boldsymbol{t} \in \mathcal{A}_{n,m}; \nu\underline{\lambda}^{-2}\leq \Vert \boldsymbol{t} - \boldsymbol{t}_n^{\star}\Vert\leq n\gamma\Delta_{\boldsymbol{\tau}^{\star}}\right\rbrace\;,\label{eq:C_alpha_gamma_n}\\
\mathcal{C}_{\nu, \gamma, n, m}' & = & \mathcal{C}_{\nu, \gamma, n, m}\cap \left\lbrace \boldsymbol{t}\in\mathcal{A}_{n,m}; \forall k = 1,\dots, m, t_k\geq t_{n,k}^{\star}\right\rbrace\;,
\label{eq:C'_alpha_gamma_n} \\
\mathcal{C}_{\nu, \gamma, n, m}'\left(\mathcal{I}\right) & = & \left\lbrace \boldsymbol{t}\in \mathcal{C}_{\nu, \gamma, n, m}' ; \right. \nonumber\\
 & & \left. \forall k\in\mathcal{I}, \nu\underline{\lambda}^{-2}\leq t_k - t_{n,k}^{\star}\leq n\gamma\Delta_{\boldsymbol{\tau}^{\star}} \textit{ and } \forall k\notin \mathcal{I}, t_k - t_{n,k}^{\star} < \nu\underline{\lambda}^{-2}\right\rbrace\label{eq:C'alpha_gamma_n_I}\;,
\end{eqnarray}
for any $\nu>0$, $0<\gamma <1/2$ and $\mathcal{I}\subset\left\lbrace 1,\dots ,m\right\rbrace$.
We shall also need the following lemmas in order to prove Proposition \ref{Prop:Segment} which are proved below.
\begin{lemma}\label{lem:rateXY}
Let $\left(z_0 ,\dots ,z_n\right)$ be defined by \eqref{eq:modele_new} or \eqref{eq:bkw}, then
\begin{eqnarray}\label{eq:rateXY}
\Vert Bz \Vert & = & O_P\left(n^{1/2}\right)\;,\\
\Vert z \Vert & = & O_P\left(n^{1/2}\right)\;,
\end{eqnarray}
as $n$ tends to infinity, where $Bz$ and $z$ are defined in \eqref{eq:YXE}.
\end{lemma}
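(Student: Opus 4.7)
The plan is to reduce both statements to showing $\sum_i z_i^2 = O_P(n)$, which clearly yields $\|z\|=O_P(n^{1/2})$ and $\|Bz\|=O_P(n^{1/2})$ since the two sums differ only by one endpoint. Since both models \eqref{eq:modele_new} and \eqref{eq:bkw} allow writing $z_i$ as a piecewise-constant mean plus a noise term, I would decompose $z_i = \mu_{k(i)}^\star + \eta_i$, where $k(i)\in\{0,\dots,m\}$ denotes the index of the segment containing $i$, and use the elementary bound $z_i^2 \le 2(\mu_{k(i)}^\star)^2 + 2\eta_i^2$.

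The deterministic piece $\sum_i (\mu_{k(i)}^\star)^2$ is immediately $O(n)$, since there are only $m+1$ fixed values $\mu_k^\star$ and the total number of terms is $n$. The stochastic piece $\sum_i \eta_i^2$ is handled by Markov's inequality as soon as one establishes that $\PE[\eta_i^2]$ is uniformly bounded in $i$. For model \eqref{eq:modele_new} this is immediate, because $(\eta_i)$ is already a stationary AR(1) Gaussian process with variance $\sigma^{\star 2}/(1-{\rho^\star}^2)$. For model \eqref{eq:bkw}, the analogous residual $\eta_i := z_i - \mu_{k(i)}^\star$ satisfies $\eta_i = \rho^\star \eta_{i-1} + \epsilon_i$ strictly inside each segment, while at a change-point $i = t_{n,k}^\star+1$ one gets instead $\eta_i = \rho^\star \eta_{i-1} + \rho^\star(\mu_{k-1}^\star - \mu_k^\star) + \epsilon_i$; a short recursive argument exploiting $|\rho^\star|<1$ then shows that $\PE[\eta_i^2]$ remains uniformly bounded by a constant depending only on $\rho^\star$, $\sigma^{\star 2}$ and $\max_k|\mu_k^\star - \mu_{k-1}^\star|$.

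Combining these two estimates gives $\PE\bigl[\sum_i \eta_i^2\bigr] = O(n)$, hence $\sum_i \eta_i^2 = O_P(n)$ by Markov, and therefore $\|z\|^2 = O_P(n)$ and $\|Bz\|^2 = O_P(n)$, as claimed. The only mildly delicate point is the uniform second-moment bound across segment boundaries in model \eqref{eq:bkw}, but since $m$ is fixed and $|\rho^\star|<1$, the boundary shifts introduce only a geometrically damped transient, so no real difficulty arises.
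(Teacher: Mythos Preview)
Your argument is correct. The overall structure matches the paper's proof: reduce to $\sum_i z_i^2=O_P(n)$ via the one-endpoint difference, then split $z_i$ into a deterministic part and a random part and bound each. The differences are in the decomposition chosen and the tool used for the random part.

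The paper subtracts the \emph{true} expectation $\PE(z_i)$ rather than the piecewise-constant target $\mu_{k(i)}^\star$. This is slightly cleaner: in both models \eqref{eq:modele_new} and \eqref{eq:bkw} the centered process $z_i-\PE(z_i)$ is exactly the stationary AR(1) process, so no case distinction or transient analysis at change-points is needed. The paper then invokes the mean-square ergodic theorem (Brockwell--Davis, Theorem~7.1.1) to conclude $n^{-1}\sum_i (z_i-\PE(z_i))^2\to \sigma^{\star 2}/(1-\rho^{\star 2})$. Your route instead keeps the more elementary tool (Markov's inequality on $\sum_i \eta_i^2$) but pays for it with the recursive bound on $\PE[\eta_i^2]$ across segment boundaries in model~\eqref{eq:bkw}. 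That bound is fine---indeed $\Var(\eta_i)=\Var(z_i)=\sigma^{\star 2}/(1-\rho^{\star 2})$ is constant and $|\PE[\eta_i]|$ is controlled by the geometric damping---but it is an avoidable complication. You could combine the advantages of both: subtract $\PE(z_i)$ as in the paper, observe that the centered process is stationary AR(1) in either model, and then simply apply Markov using $\PE\bigl[\sum_i (z_i-\PE(z_i))^2\bigr]=n\,\sigma^{\star 2}/(1-\rho^{\star 2})$, bypassing the ergodic theorem entirely.
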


\begin{lemma}\label{lem:BoundedUnifBound}
Let $\left(z_0 ,\dots ,z_n\right)$ be defined by \eqref{eq:modele_new} or \eqref{eq:bkw} then,
for all $\boldsymbol{t}\in\mathcal{A}_{n,m}$,
\begin{equation*}
\left\vert J_n \left(\overline{w}, \boldsymbol{t}\right) - J_n \left(w^{\star}, \boldsymbol{t}\right)\right\vert \leq 2\frac{\left\vert \rho^{\star} - \overline{\rho} \right\vert}{n} \left\Vert Bz\right\Vert \left(\left\vert \rho^{\star} + \overline{\rho} \right\vert \left\Vert Bz\right\Vert + 2 \left\Vert z\right\Vert\right) = {O_P\left(n^{-1/2}\right)} = o_P\left(1\right)\;,
\end{equation*}
 {as $n\to\infty$}, where $J_n$ is defined in \eqref{eq:Jnm}, $Bz$ and $z$ are defined in \eqref{eq:YXE}, $w^{\star}$ is defined in \eqref{eq:decor} and $\overline{w}$ is defined in \eqref{eq:overline_x}.
\end{lemma}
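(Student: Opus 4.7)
The plan is to bound the difference projection-subspace by projection-subspace, using the elementary identity $\|a\|^{2}-\|b\|^{2}=\langle a-b,a+b\rangle$ applied inside each projection, and then to control the resulting inner products by Cauchy--Schwarz together with the fact that $\|\pi_E(u)\|\leq \|u\|$ for any orthogonal projection $\pi_E$.

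First I would write down the two key identities coming directly from \eqref{eq:decor} and \eqref{eq:overline_x}:
\begin{equation*}
\overline{w}-w^{\star}=(\rho^{\star}-\overline{\rho}_n)\,Bz,\qquad \overline{w}+w^{\star}=2z-(\rho^{\star}+\overline{\rho}_n)\,Bz.
\end{equation*}
Then, for any linear subspace $E\subset\mathbb{R}^n$, linearity of $\pi_E$ gives
\begin{equation*}
\|\pi_E(\overline{w})\|^{2}-\|\pi_E(w^{\star})\|^{2}=\langle \pi_E(\overline{w}-w^{\star}),\,\pi_E(\overline{w}+w^{\star})\rangle=(\rho^{\star}-\overline{\rho}_n)\,\langle \pi_E(Bz),\,\pi_E(2z-(\rho^{\star}+\overline{\rho}_n)Bz)\rangle.
\end{equation*}
Cauchy--Schwarz combined with the contractivity $\|\pi_E(u)\|\leq \|u\|$ and the triangle inequality then yield
\begin{equation*}
\bigl|\|\pi_E(\overline{w})\|^{2}-\|\pi_E(w^{\star})\|^{2}\bigr|\leq |\rho^{\star}-\overline{\rho}_n|\,\|Bz\|\,\bigl(|\rho^{\star}+\overline{\rho}_n|\|Bz\|+2\|z\|\bigr).
\end{equation*}

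Next I would apply this bound twice, once to $E=E_{\boldsymbol{t}_n^{\star}}$ and once to $E=E_{\boldsymbol{t}}$, and combine through the triangle inequality in the definition \eqref{eq:Jnm} of $J_n$. This produces a factor of $2$ and gives directly the pointwise inequality
\begin{equation*}
\bigl|J_n(\overline{w},\boldsymbol{t})-J_n(w^{\star},\boldsymbol{t})\bigr|\leq \frac{2|\rho^{\star}-\overline{\rho}_n|}{n}\,\|Bz\|\bigl(|\rho^{\star}+\overline{\rho}_n|\|Bz\|+2\|z\|\bigr),
\end{equation*}
uniformly in $\boldsymbol{t}\in\mathcal{A}_{n,m}$ since the upper bound no longer depends on $\boldsymbol{t}$.

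For the stochastic order statement, I would invoke Lemma \ref{lem:rateXY} to get $\|Bz\|=O_P(n^{1/2})$ and $\|z\|=O_P(n^{1/2})$, note that $|\rho^{\star}+\overline{\rho}_n|=O_P(1)$ (by $|\rho^{\star}|<1$ and the assumption $\overline{\rho}_n-\rho^{\star}=O_P(n^{-1/2})$ from Proposition \ref{Prop:Segment}), and use this same assumption $|\rho^{\star}-\overline{\rho}_n|=O_P(n^{-1/2})$ to conclude
\begin{equation*}
\frac{1}{n}\cdot O_P(n^{-1/2})\cdot O_P(n^{1/2})\cdot O_P(n^{1/2})=O_P(n^{-1/2})=o_P(1),
\end{equation*}
as $n\to\infty$. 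There is no real obstacle: the proof is essentially an algebraic identity combined with Cauchy--Schwarz and Lemma \ref{lem:rateXY}, the only point requiring slight care being to keep the two projections separate until the very last step, so that the final bound is indeed uniform in $\boldsymbol{t}$.
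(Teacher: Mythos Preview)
Your proof is correct and is essentially the same as the paper's. The only cosmetic difference is that the paper expands $\|\pi_E(\overline{w})\|^{2}$ directly via $\overline{w}=w^{\star}+(\rho^{\star}-\overline{\rho}_n)Bz$ and then regroups the resulting four terms into the same inner product you obtain, whereas you reach that inner product in one line through the identity $\|a\|^{2}-\|b\|^{2}=\langle a-b,a+b\rangle$; the subsequent use of Cauchy--Schwarz, the $1$-Lipschitz property of projections, Lemma~\ref{lem:rateXY}, and \eqref{eq:hypRhoRate} is identical.
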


\begin{lemma}\label{lem:consistency}
Under the assumptions of Proposition \ref{Prop:Segment},
$\Vert\boldsymbol{\overline{\tau}_n}-\boldsymbol{\tau}^{\star}\Vert_{\infty}$ converges in probability to $0$, as $n$ tends to infinity.
\end{lemma}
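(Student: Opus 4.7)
The plan is to reduce the consistency of $\boldsymbol{\overline{\tau}_n}=\boldsymbol{\widehat{\tau}}_n(z,\overline{\rho}_n)$ to the classical consistency result of Lavielle and Moulines (\cite{LM}) applied to the exactly decorrelated series $w^{\star}$, using Lemma \ref{lem:BoundedUnifBound} as the bridge between the estimated and true decorrelations.

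First, I would rephrase the minimization as a projection problem. Partial minimization of $SS_m(z,\overline{\rho}_n,\boldsymbol{\delta},\boldsymbol{t})$ with respect to $\boldsymbol{\delta}$ gives $\|\overline{w}-\pi_{E_{\boldsymbol{t}}}(\overline{w})\|^2$, so $\boldsymbol{\overline{t}_n}=n\boldsymbol{\overline{\tau}_n}$ is characterized as a minimizer of $J_n(\overline{w},\boldsymbol{t})$ over $\mathcal{A}_{n,m}$ in the sense of \eqref{eq:Jnm}. For $n$ large enough, $\boldsymbol{t}_n^{\star}\in\mathcal{A}_{n,m}$ (the true segments have length of order $n$, which eventually dominates $\Delta_n$), hence
$$J_n(\overline{w},\boldsymbol{\overline{t}_n})\leq J_n(\overline{w},\boldsymbol{t}_n^{\star})=0.$$
Lemma \ref{lem:BoundedUnifBound} then transfers this to the true decorrelation: uniformly in $\boldsymbol{t}\in\mathcal{A}_{n,m}$, $J_n(\overline{w},\boldsymbol{t})=J_n(w^{\star},\boldsymbol{t})+o_P(1)$, which yields
$$J_n(w^{\star},\boldsymbol{\overline{t}_n})\leq o_P(1).$$

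Next, I would exploit the fact that the exactly decorrelated series has the explicit form $w^{\star}_i=\delta_k^{\star}+\epsilon_i$ on the $k$-th true segment, with the $\epsilon_i$'s i.i.d.\ centered and having a finite second moment (as allowed by the proposition's relaxed assumption). We are therefore in the independent change-point framework of \cite{LM}. Using the decomposition \eqref{eq:Jn_decomp}, I would show that $V_n(w^{\star},\cdot)$ and $W_n(w^{\star},\cdot)$ tend to $0$ in probability uniformly over $\mathcal{A}_{n,m}$ by standard maximal inequalities on partial sums of i.i.d.\ centered variables (this is exactly the uniform control established in \cite{LM}). For the deterministic term $K_n(w^{\star},\boldsymbol{t})$, I would prove the classical separation inequality: for every $\eta>0$,
$$\inf\left\{K_n(w^{\star},\boldsymbol{t});\,\boldsymbol{t}\in\mathcal{A}_{n,m},\,\|\boldsymbol{t}/n-\boldsymbol{\tau}^{\star}\|_{\infty}\geq\eta\right\}\geq c(\eta)\,\underline{\lambda}^2,$$
where $c(\eta)>0$ for $n$ large, using the identity $\mathbb{E}[w^{\star}]=T(\boldsymbol{t}_n^{\star})\boldsymbol{\delta}^{\star}$ together with $\underline{\lambda}>0$ in \eqref{eq:underline_lambda}.

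Combining these ingredients, on the event $\{\|\boldsymbol{\overline{\tau}_n}-\boldsymbol{\tau}^{\star}\|_{\infty}\geq\eta\}$ one has
$$J_n(w^{\star},\boldsymbol{\overline{t}_n})=K_n(w^{\star},\boldsymbol{\overline{t}_n})+V_n(w^{\star},\boldsymbol{\overline{t}_n})+W_n(w^{\star},\boldsymbol{\overline{t}_n})\geq c(\eta)\,\underline{\lambda}^2+o_P(1),$$
which is strictly positive with probability tending to $1$, contradicting the upper bound $J_n(w^{\star},\boldsymbol{\overline{t}_n})\leq o_P(1)$ obtained in the first step. Hence $\|\boldsymbol{\overline{\tau}_n}-\boldsymbol{\tau}^{\star}\|_{\infty}\to 0$ in probability. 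The only non-routine obstacle, namely the uniform-in-$\boldsymbol{t}$ control of the plug-in error $J_n(\overline{w},\boldsymbol{t})-J_n(w^{\star},\boldsymbol{t})$, is precisely what Lemma \ref{lem:BoundedUnifBound} delivers thanks to the rate assumption \eqref{eq:hypRhoRate}; the rest of the argument is the standard i.i.d.\ Lavielle--Moulines scheme applied to $w^{\star}$.
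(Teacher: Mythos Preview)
Your proposal is correct and follows essentially the same approach as the paper. Both arguments rest on the identity $J_n(\overline{w},\boldsymbol{\overline{t}_n})\leq 0$, the uniform transfer $J_n(\overline{w},\cdot)=J_n(w^{\star},\cdot)+o_P(1)$ via Lemma \ref{lem:BoundedUnifBound}, and the Lavielle--Moulines bounds on $K_n,V_n,W_n$; the paper phrases this as a direct probability bound on $\{\min_{\boldsymbol{t}\in\mathcal{C}_{n,m,\nu}}J_n(\overline{w},\boldsymbol{t})\leq 0\}$ while you phrase it as a contradiction, but the content is the same.
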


\begin{lemma}\label{lem:almost72LM}
Under the assumptions of Proposition \ref{Prop:Segment} and for any $\nu>0$, $0<\gamma <1/2$ and $\mathcal{I}\subset\left\lbrace 1,\dots ,m\right\rbrace$, 
\begin{equation*}
P\left(\min_{\boldsymbol{t}\in \mathcal{C}_{\nu, \gamma, n, m}'\left(\mathcal{I}\right)} \left(\frac{1}{2} K_n\left(w^{\star},\boldsymbol{t}\right) + V_n\left(w^{\star},\boldsymbol{t}\right) + W_n\left(w^{\star},\boldsymbol{t}\right)\right)\leq 0 \right) \longrightarrow 0\;,\; \textrm{as } n\to\infty\;,
\end{equation*}
where $\mathcal{C}_{\nu, \gamma, n, m}'\left(\mathcal{I}\right)$ is defined in \eqref{eq:C'alpha_gamma_n_I} and $w^{\star}$ is defined in \eqref{eq:decor}.
\end{lemma}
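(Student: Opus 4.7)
The plan is to follow closely the blueprint of Lemma 2 in \cite{LM}, exploiting the fact that after decorrelation the problem reduces to a classical change-point-in-mean model driven by i.i.d.\ noise. Indeed, from \eqref{eq:bkw} one gets $w^{\star}_i = z_i - \rho^{\star} z_{i-1} = \delta_k^{\star} + \epsilon_i$ for $t_{n,k}^{\star}+1 \leq i \leq t_{n,k+1}^{\star}$, so $\mathbb{E} w^{\star}$ is piecewise constant with jumps $\delta_k^{\star} - \delta_{k-1}^{\star}$ (of modulus at least $\underline{\lambda}$) at the true change-points, while $w^{\star} - \mathbb{E} w^{\star}$ has i.i.d.\ centered coordinates with finite variance. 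The decomposition \eqref{eq:Jn_decomp} is then exactly the one analysed in \cite{LM}, and it suffices to show that $\tfrac12 K_n$ dominates $V_n+W_n$ uniformly on $\mathcal{C}'_{\nu,\gamma,n,m}(\mathcal{I})$.

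First I would derive a lower bound for $K_n$. On $\mathcal{C}'_{\nu,\gamma,n,m}(\mathcal{I})$ one has $t_{n,k}^{\star} \leq t_k \leq t_{n,k}^{\star} + n\gamma\Delta_{\boldsymbol{\tau}^{\star}}$ with $\gamma<1/2$, so the intervals $(t_{n,k}^{\star}, t_k]$ are pairwise disjoint and each is contained in a true segment of length at least $n\Delta_{\boldsymbol{\tau}^{\star}}$. A direct computation of the $L^2$-projection of $\mathbb{E} w^{\star}$ on $E_{\boldsymbol{t}}$ then yields
\begin{equation*}
K_n(w^{\star},\boldsymbol{t}) \;\geq\; \frac{C(\gamma)}{n}\sum_{k=1}^{m}(\delta_k^{\star}-\delta_{k-1}^{\star})^2(t_k-t_{n,k}^{\star}) \;\geq\; \frac{C(\gamma)\underline{\lambda}^2}{n}\sum_{k\in\mathcal{I}}(t_k-t_{n,k}^{\star}),
\end{equation*}
where $C(\gamma)>0$ depends only on $\gamma$ (the constraint $\gamma<1/2$ enters to keep the denominators in the projection formula bounded away from $0$).

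Next I would bound $V_n$ and $W_n$ uniformly on $\mathcal{C}'_{\nu,\gamma,n,m}(\mathcal{I})$. Because $\mathbb{E} w^{\star}\in E_{\boldsymbol{t}_n^{\star}}$, one checks that $W_n = \tfrac{2}{n}\langle w^{\star}-\mathbb{E} w^{\star}, \mathbb{E} w^{\star} - \pi_{E_{\boldsymbol{t}}}\mathbb{E} w^{\star}\rangle$, and $V_n$ only involves the orthogonal projection of the noise on the symmetric difference $E_{\boldsymbol{t}_n^{\star}}\ominus E_{\boldsymbol{t}}$. Rewriting both in terms of the partial sums $S_k(s) = \sum_{t_{n,k}^{\star}<i\leq t_{n,k}^{\star}+s}\epsilon_i$ and applying a Hájek--Rényi inequality to each family $\{S_k(s)/\sqrt{s}\}_{s\geq \nu\underline{\lambda}^{-2}}$ (which only requires the second-moment hypothesis), I get that for every $\eta>0$ there exists $A$ independent of $n$ such that, with probability at least $1-\eta$,
\begin{equation*}
|V_n(w^{\star},\boldsymbol{t})|+|W_n(w^{\star},\boldsymbol{t})| \;\leq\; \frac{C'}{n}\sum_{k\in\mathcal{I}}\Bigl(A^2+A\overline{\lambda}\sqrt{t_k-t_{n,k}^{\star}}\Bigr),
\end{equation*}
uniformly on $\mathcal{C}'_{\nu,\gamma,n,m}(\mathcal{I})$.

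Combining the two displays, on the same event,
\begin{equation*}
\tfrac12 K_n + V_n + W_n \;\geq\; \frac{1}{n}\sum_{k\in\mathcal{I}}\Bigl(\tfrac{C(\gamma)\underline{\lambda}^2}{2}(t_k-t_{n,k}^{\star}) - C'A\overline{\lambda}\sqrt{t_k-t_{n,k}^{\star}} - C'A^2\Bigr),
\end{equation*}
and since each $t_k - t_{n,k}^{\star} \geq \nu\underline{\lambda}^{-2}$ on $\mathcal{C}'_{\nu,\gamma,n,m}(\mathcal{I})$, choosing $\nu$ large enough (depending only on $A$, $\underline{\lambda}$, $\overline{\lambda}$, $\gamma$) makes every bracket positive; letting $\eta\downarrow 0$ then yields the conclusion. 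The main obstacle will be establishing the Hájek--Rényi-type bound with a constant uniform in both $n$ and $\boldsymbol{t}$, so that a single good event controls the supremum over the continuum of partitions, and verifying that the interaction between the $m$ simultaneous displacements does not spoil the constant $C(\gamma)$ in the lower bound on $K_n$ (this is exactly where the one-sided restriction $t_k\geq t_{n,k}^{\star}$ that defines $\mathcal{C}'_{\nu,\gamma,n,m}$ is crucial).
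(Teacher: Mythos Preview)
Your approach is precisely the one the paper invokes: its own proof is a three-line pointer to \cite{LM}, citing (64)--(66) there for the sharp lower bound on $K_n$ and the maximal-inequality control of $V_n,W_n$, observing that replacing $K_n$ by $\tfrac12 K_n$ only halves a constant, deriving the probability bound \cite[(73)]{LM}, and closing via \cite[(67)--(71)]{LM}. Your reconstruction of these ingredients---the $\sum_{k\in\mathcal I}(t_k-t_{n,k}^{\star})$ lower bound, the H\'ajek--R\'enyi control of the partial sums $S_k(\cdot)$, and the bracket comparison---is the right blueprint and matches \cite{LM}.

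The one slip is in your closing step. The lemma is stated for \emph{every} $\nu>0$, so you are not free to ``choose $\nu$ large enough''. What your bracket argument actually delivers is a bound of the form
\[
\sup_{n}\ P\!\Bigl(\min_{\boldsymbol t\in\mathcal C'_{\nu,\gamma,n,m}(\mathcal I)}\bigl(\tfrac12K_n+V_n+W_n\bigr)\le 0\Bigr)\ \le\ h(\nu),\qquad h(\nu)\xrightarrow[\nu\to\infty]{}0,
\]
which is strictly weaker than the stated conclusion. This is exactly the shape of the H\'ajek--R\'enyi output, and it \emph{is} what one needs downstream: in the proof of Lemma~\ref{lem:rateT} one then picks $\nu$ so that $h(\nu)<\varepsilon$ to get the $O_P(n^{-1})$ rate. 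If you want the lemma literally for all $\nu>0$, you cannot stop at ``$\nu$ large enough''; you would have to check whether the specific estimates in \cite[(67)--(71)]{LM} carry additional $n$-dependence (through the drifting locations $t_{n,k}^{\star}$ or the minimal length $\Delta_n$) that forces the bound to zero for fixed $\nu$. The paper does not spell this out beyond the citation, and neither does your argument.
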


\begin{lemma}\label{lem:almost72LM2}
Under the assumptions of Proposition \ref{Prop:Segment} and for any $\nu>0$, $0<\gamma <1/2$ and $\mathcal{I}\subset\left\lbrace 1,\dots ,m\right\rbrace$, 
\begin{equation*}
P\left(\min_{\boldsymbol{t}\in \mathcal{C}_{\nu, \gamma, n, m}'\left(\mathcal{I}\right)} J_n\left(\overline{w},\boldsymbol{t}\right)\leq 0\right) \longrightarrow 0
\;,\; \textrm{as } n\to\infty\;,
\end{equation*}
where $\mathcal{C}_{\nu, \gamma, n, m}'\left(\mathcal{I}\right)$ is defined in \eqref{eq:C'alpha_gamma_n_I} and $\overline{w}$ is defined in \eqref{eq:overline_x}.
\end{lemma}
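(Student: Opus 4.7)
The plan is to combine Lemmas \ref{lem:almost72LM} and \ref{lem:BoundedUnifBound} through the decomposition
\begin{equation*}
J_n(\overline{w}, \boldsymbol{t}) = \left[\tfrac{1}{2} K_n(w^{\star}, \boldsymbol{t}) + V_n(w^{\star}, \boldsymbol{t}) + W_n(w^{\star}, \boldsymbol{t})\right] + \tfrac{1}{2} K_n(w^{\star}, \boldsymbol{t}) + \left(J_n(\overline{w}, \boldsymbol{t}) - J_n(w^{\star}, \boldsymbol{t})\right),
\end{equation*}
which recombines the three terms via \eqref{eq:Jn_decomp} applied to $w^{\star}$. Lemma \ref{lem:almost72LM} ensures that the first bracket is strictly positive uniformly on $\mathcal{C}_{\nu, \gamma, n, m}'(\mathcal{I})$ with probability tending to one, so it remains to check that $\tfrac{1}{2} K_n(w^{\star}, \boldsymbol{t}) + (J_n(\overline{w}, \boldsymbol{t}) - J_n(w^{\star}, \boldsymbol{t})) \geq 0$ uniformly on $\mathcal{C}_{\nu, \gamma, n, m}'(\mathcal{I})$ w.h.p.

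For this, I would first establish a deterministic lower bound $K_n(w^{\star}, \boldsymbol{t}) \geq c\,\underline{\lambda}^2 \sum_{k \in \mathcal{I}}(t_k - t_{n,k}^{\star})/n \geq c\,\nu/n$ on $\mathcal{C}_{\nu, \gamma, n, m}'(\mathcal{I})$, via an explicit computation of $\|(\pi_{E_{\boldsymbol{t}_n^{\star}}} - \pi_{E_{\boldsymbol{t}}})\mathbb{E} w^{\star}\|^2$ based on the piecewise-constant structure of $\mathbb{E} w^{\star}$ (in the spirit of Lavielle--Moulines), where $c>0$ depends only on $\gamma$ and $\Delta_{\boldsymbol{\tau}^{\star}}$. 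I would then sharpen Lemma \ref{lem:BoundedUnifBound}, whose $\boldsymbol{t}$-uniform $O_P(n^{-1/2})$ bound is too coarse to match $K_n \sim 1/n$. Using $\overline{w} - w^{\star} = (\rho^{\star} - \overline{\rho}_n) Bz$ and expanding,
\begin{equation*}
J_n(\overline{w}, \boldsymbol{t}) - J_n(w^{\star}, \boldsymbol{t}) = \frac{2(\rho^{\star} - \overline{\rho}_n)}{n}\bigl\langle(\pi_{E_{\boldsymbol{t}_n^{\star}}} - \pi_{E_{\boldsymbol{t}}}) w^{\star}, Bz\bigr\rangle + \frac{(\rho^{\star} - \overline{\rho}_n)^2}{n}\bigl(\|\pi_{E_{\boldsymbol{t}_n^{\star}}} Bz\|^2 - \|\pi_{E_{\boldsymbol{t}}} Bz\|^2\bigr),
\end{equation*}
splitting $w^{\star} = \mathbb{E} w^{\star} + (w^{\star} - \mathbb{E} w^{\star})$, applying Cauchy--Schwarz together with the identity $\|(\pi_{E_{\boldsymbol{t}_n^{\star}}} - \pi_{E_{\boldsymbol{t}}})\mathbb{E} w^{\star}\|^2 = n K_n(w^{\star}, \boldsymbol{t})$, and invoking \eqref{eq:hypRhoRate} and Lemma \ref{lem:rateXY}, the dominant contribution factors out the $o_P(1)$ prefactor $|\rho^{\star} - \overline{\rho}_n|\,\|Bz\|/\sqrt{n}$, yielding an error of the form $o_P(1)\,K_n^{1/2}(w^{\star}, \boldsymbol{t}) + o_P(1/n)$; the additive remainder is $o_P(1/n)$ because the noise projected on the low-dimensional subspace $\mathrm{Im}(\pi_{E_{\boldsymbol{t}_n^{\star}}} - \pi_{E_{\boldsymbol{t}}})$ has norm $O_P(1)$.

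Combining, $\tfrac{1}{2} K_n + (J_n(\overline{w}) - J_n(w^{\star})) \geq \tfrac{1}{2} K_n - o_P(1)\,K_n^{1/2} - o_P(1/n)$, and plugging $K_n \geq c\,\nu/n$ into the right-hand side gives at least $\frac{1}{n}\bigl(c\,\nu/2 - o_P(1)\sqrt{c\,\nu} - o_P(1)\bigr)$, which is positive with probability tending to one for every fixed $\nu > 0$, since $o_P(1)$ quantities eventually become smaller than the fixed positive constant $c\,\nu/2$. The main technical obstacle is precisely this sharpening of Lemma \ref{lem:BoundedUnifBound} into a $\boldsymbol{t}$-dependent bound with vanishing prefactors; the crude uniform $O_P(n^{-1/2})$ control cannot compete with $K_n \sim 1/n$, and only the factorisation into $o_P(1)\,K_n^{1/2}$ allows the fixed positive constant $c\,\nu/2$ to dominate for arbitrarily small $\nu$.
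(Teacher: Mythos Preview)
Your opening decomposition and the use of Lemma \ref{lem:almost72LM} for the bracket $\tfrac{1}{2}K_n + V_n + W_n$ match the paper exactly. The divergence is in how the second piece $\tfrac{1}{2}K_n + (J_n(\overline{w},\boldsymbol{t}) - J_n(w^{\star},\boldsymbol{t}))$ is handled. The paper does not sharpen Lemma \ref{lem:BoundedUnifBound} at all: it invokes the lower bound $\min_{\boldsymbol{t}\in\mathcal{C}'_{\nu,\gamma,n,m}(\mathcal{I})} K_n(w^{\star},\boldsymbol{t}) \geq (1-\gamma)\Delta_{\boldsymbol{\tau}^{\star}}\nu$ from \cite[(65)]{LM} --- a fixed positive constant independent of $n$ --- so that the crude uniform $o_P(1)$ control of $J_n(\overline{w},\cdot) - J_n(w^{\star},\cdot)$ supplied by Lemma \ref{lem:BoundedUnifBound} finishes immediately. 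Your whole programme of refining Lemma \ref{lem:BoundedUnifBound} into a $\boldsymbol{t}$-dependent bound is, from the paper's standpoint, unnecessary: you work under $K_n \sim c\nu/n$, whereas the paper takes $K_n$ to be bounded below by a constant on this set.

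Independently of that comparison, your final step has a genuine gap. Write $a_n := |\rho^{\star}-\overline{\rho}_n|\,\|Bz\|/\sqrt{n}$ for your ``$o_P(1)$ prefactor''. Evaluating your lower bound $\tfrac{1}{2}K_n - a_n K_n^{1/2} - r_n$ at $K_n = c\nu/n$ gives
\[
\frac{c\nu}{2n} - a_n\,\frac{\sqrt{c\nu}}{\sqrt{n}} - r_n \;=\; \frac{1}{n}\Bigl(\tfrac{c\nu}{2} - (\sqrt{n}\,a_n)\sqrt{c\nu} - n r_n\Bigr),
\]
so the coefficient that must stay below $c\nu/2$ is $\sqrt{n}\,a_n = |\rho^{\star}-\overline{\rho}_n|\,\|Bz\|$, which under \eqref{eq:hypRhoRate} and Lemma \ref{lem:rateXY} is $O_P(1)$ but \emph{not} $o_P(1)$ in general (both $\sqrt{n}(\overline{\rho}_n-\rho^{\star})$ and $n^{-1/2}\|Bz\|$ are merely tight). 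Hence $c\nu/2 - O_P(1)\sqrt{c\nu}$ is not positive with probability tending to one for every fixed $\nu>0$; at best you recover tightness for $\nu$ large, which is weaker than what the lemma asserts. The same issue affects your ``$o_P(1/n)$'' remainder: the quadratic term $(\rho^{\star}-\overline{\rho}_n)^2(\|\pi_{E_{\boldsymbol{t}_n^{\star}}}Bz\|^2-\|\pi_{E_{\boldsymbol{t}}}Bz\|^2)/n$ is only $O_P(1/n)$, and the uniform control $\sup_{\boldsymbol{t}}\|(\pi_{E_{\boldsymbol{t}_n^{\star}}}-\pi_{E_{\boldsymbol{t}}})\epsilon\|$ is $O_P(\sqrt{\log n})$, not $O_P(1)$. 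In short, an $o_P(1)K_n^{1/2}$ device cannot dominate $K_n$ when $K_n\sim 1/n$, since $K_n^{1/2}\sim n^{-1/2}\gg n^{-1}$.
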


\begin{lemma}\label{lem:rateT}
Under the assumptions of Proposition \ref{Prop:Segment},
\begin{equation*}
\Vert\boldsymbol{\widehat{\tau}_n}(z, \overline{\rho}_n)-\boldsymbol{\tau}^{\star} \Vert_{\infty} = O_P\left(n^{-1}\right)\;.
\end{equation*}
\end{lemma}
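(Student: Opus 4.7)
The strategy is to prove the rate by contradiction, combining the optimality of $\widehat{\boldsymbol{t}}_n$ with Lemma~\ref{lem:almost72LM2}. Since the true spacings $t^\star_{n,k+1}-t^\star_{n,k}$ grow linearly in $n$ while $\Delta_n = o(n)$, we have $\boldsymbol{t}^\star_n \in \mathcal{A}_{n,m}$ for $n$ large enough. Combined with the identity
\[
\min_{\boldsymbol{\delta}} SS_m(z,\overline{\rho}_n,\boldsymbol{\delta},\boldsymbol{t}) = \|\overline{w}\|^2 - \|\pi_{E_{\boldsymbol{t}}}(\overline{w})\|^2,
\]
the defining optimality of $\widehat{\boldsymbol{t}}_n$ in~\eqref{eq:delta_n,t_n} therefore yields $\|\pi_{E_{\boldsymbol{t}^\star_n}}(\overline{w})\|^2 \leq \|\pi_{E_{\widehat{\boldsymbol{t}}_n}}(\overline{w})\|^2$, i.e.\ $J_n(\overline{w}, \widehat{\boldsymbol{t}}_n) \leq 0$ almost surely.

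Fix $\epsilon > 0$ and $\gamma \in (0, 1/2)$. Since $\underline{\lambda}$ is a positive constant, it suffices to produce $\nu > 0$ such that $\mathbb{P}(\|\widehat{\boldsymbol{t}}_n - \boldsymbol{t}^\star_n\|_\infty \geq \nu/\underline{\lambda}^2) \leq \epsilon$ for $n$ large, which is exactly the desired $O_P(1)$ bound on $\widehat{\boldsymbol{t}}_n - \boldsymbol{t}^\star_n$, i.e.\ $O_P(n^{-1})$ on $\widehat{\boldsymbol{\tau}}_n - \boldsymbol{\tau}^\star$. Lemma~\ref{lem:consistency} already guarantees $\mathbb{P}(\|\widehat{\boldsymbol{t}}_n - \boldsymbol{t}^\star_n\|_\infty \geq n\gamma\Delta_{\boldsymbol{\tau}^\star}) \leq \epsilon/2$ eventually. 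On the complementary event I classify each index $k \in \{1, \dots, m\}$ into three classes depending on whether $\widehat{t}_{n,k} - t^\star_{n,k} \geq \nu/\underline{\lambda}^2$ (set $\mathcal{I}_+$), $\widehat{t}_{n,k} - t^\star_{n,k} \leq -\nu/\underline{\lambda}^2$ (set $\mathcal{I}_-$), or $|\widehat{t}_{n,k} - t^\star_{n,k}| < \nu/\underline{\lambda}^2$. The event $\{\|\widehat{\boldsymbol{t}}_n - \boldsymbol{t}^\star_n\|_\infty \geq \nu/\underline{\lambda}^2\}$ coincides with $\{\mathcal{I}_+ \cup \mathcal{I}_- \neq \emptyset\}$ and decomposes into at most $3^m - 1$ subevents indexed by the nonempty pairs $(\mathcal{I}_+, \mathcal{I}_-)$. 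On each such subevent $\widehat{\boldsymbol{t}}_n$ belongs to the natural two-sided analogue of $\mathcal{C}'_{\nu,\gamma,n,m}(\mathcal{I}_+)$: $t_k - t^\star_{n,k} \geq \nu/\underline{\lambda}^2$ for $k \in \mathcal{I}_+$, $t_k - t^\star_{n,k} \leq -\nu/\underline{\lambda}^2$ for $k \in \mathcal{I}_-$, and $|t_k - t^\star_{n,k}| < \nu/\underline{\lambda}^2$ otherwise. Lemma~\ref{lem:almost72LM2} together with its mirror-image statement (obtained by replaying the same proof with the roles of $t_k$ and $t^\star_{n,k}$ swapped at the indices in $\mathcal{I}_-$) then gives $\inf J_n(\overline{w}, \boldsymbol{t}) > 0$ uniformly on each piece with probability tending to one. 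A union bound over the finitely many sign patterns contradicts $J_n(\overline{w}, \widehat{\boldsymbol{t}}_n) \leq 0$ except on an event of probability at most $\epsilon/2$; combining with the consistency step delivers the claimed bound with total error at most $\epsilon$.

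The main delicate point is the mirror-image extension of Lemma~\ref{lem:almost72LM2}: the deterministic lower bound on $K_n(w^\star, \boldsymbol{t})$ depends only on the absolute differences $|t_k - t^\star_{n,k}|$ and extends directly, and the stochastic controls on $V_n$ and $W_n$ should also carry through by symmetry after relabelling, but this needs to be verified carefully before the union bound above can be invoked at the indices in $\mathcal{I}_-$.
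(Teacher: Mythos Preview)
Your proposal is correct and follows essentially the same route as the paper: both use the optimality inequality $J_n(\overline{w},\widehat{\boldsymbol{t}}_n)\le 0$, restrict to the region $\|\widehat{\boldsymbol{t}}_n-\boldsymbol{t}_n^\star\|_\infty\le n\gamma\Delta_{\boldsymbol{\tau}^\star}$ via Lemma~\ref{lem:consistency}, partition according to the sign and size of each coordinate deviation, and conclude by Lemma~\ref{lem:almost72LM2} together with a finite union bound. The only organisational difference is that the paper first splits by sign pattern (reducing ``without loss of generality'' to the all-positive case $\mathcal{C}'_{\nu,\gamma,n,m}$) and then by which coordinates exceed $\nu\underline{\lambda}^{-2}$, whereas you do both splits at once into $3^m-1$ cells; the two decompositions are equivalent. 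The ``mirror-image'' concern you flag is exactly what the paper's WLOG reduction is absorbing, and it is indeed harmless: the bounds on $K_n$, $V_n$, $W_n$ from \cite{LM} used in Lemmas~\ref{lem:almost72LM} and~\ref{lem:almost72LM2} depend only on $|t_k-t_{n,k}^\star|$, so the argument goes through verbatim for any sign pattern.
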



\begin{proof}[Proof of Lemma \ref{lem:rateXY}]
Without loss of generality, assume $\left(z_0 ,\dots ,z_n \right)$ is defined by \eqref{eq:bkw}.
$ \Vert z \Vert^2 - \Vert Bz \Vert^2 = z_n^2 - z_0^2 = O_P\left(1\right) $
thus we only need to prove \eqref{eq:rateXY}. Observe that 
$
\Vert Bz\Vert^2 = \sum_{i=0}^{n-1}z_i^2 \leq 2 \sum_{i=0}^{n-1}(z_i-\PE(z_i))^2+2 \sum_{i=0}^{n-1}\PE(z_i)^2.
$
Since $((z_i-\PE(z_i))^2)$ is stationary with autocovariance function $\gamma$ such that $\gamma(h)\to 0$ as $h\to\infty$, 
\cite[Theorem 7.1.1]{brockwell} implies that 
$
\Vert Bz\Vert^2=O_P (n).
$
\end{proof}

\begin{proof}[Proof of Lemma \ref{lem:BoundedUnifBound}]
By \eqref{eq:decor}, $\overline{w}=w^{\star}+(\rho^{\star}-\overline{\rho}_n)Bz$. Thus, by \eqref{eq:Jnm}, we get
\begin{multline}\label{eq:Jnm_diff}
J_n \left(\overline{w}, \boldsymbol{t}\right) - J_n \left(w^{\star}, \boldsymbol{t}\right) = \\
\frac{\left(\rho^{\star} - \overline{\rho}_n\right)^2}{n} \left\Vert \pi_{E_{\boldsymbol{t}^{\star}}} \left( Bz \right)\right\Vert^2 + \frac{2\left(\rho^{\star} - \overline{\rho}_n\right)}{n} \left\langle \pi_{E_{\boldsymbol{t}^{\star}}}\left(z-\rho^{\star} Bz \right), \pi_{E_{\boldsymbol{t}^{\star}}}\left(Bz\right)\right\rangle\\ 
-\frac{\left(\rho^{\star} - \overline{\rho}_n\right)^2}{n} \left\Vert \pi_{E_{\boldsymbol{t}}}\left(Bz\right)\right\Vert^2 - \frac{2\left(\rho^{\star} - \overline{\rho}_n\right)}{n}\left\langle \pi_{E_{\boldsymbol{t}}}\left(z-\rho^{\star} Bz \right), \pi_{E_{\boldsymbol{t}}}\left(Bz\right)\right\rangle\;.
\end{multline}
Observe that the sum of the first two term in the rhs of \eqref{eq:Jnm_diff} can be rewritten as follows:
\begin{multline*}
\frac{1}{n} (\rho^{\star} - \overline{\rho}_n) \left\langle  \pi_{E_{\boldsymbol{t}^{\star}}}(Bz),(\rho^{\star} - \overline{\rho}_n)\pi_{E_{\boldsymbol{t}^{\star}}}(Bz)+2\pi_{E_{\boldsymbol{t}^{\star}}}(z-\rho^{\star} Bz)\right\rangle\\
=\frac{1}{n} (\rho^{\star} - \overline{\rho}_n) \left\langle  \pi_{E_{\boldsymbol{t}^{\star}}}(Bz),\pi_{E_{\boldsymbol{t}^{\star}}}\left(2z-(\rho^{\star}+\overline{\rho}_n)Bz\right)\right\rangle\;.
\end{multline*}
Since the same can be done for the last two terms in the rhs of (\ref{eq:Jnm_diff}), the Cauchy-Schwarz inequality and the $1$-Lipschitz property of 
projections give
\begin{equation*}
\left\vert J_n \left(\overline{w}, \boldsymbol{t}\right) - J_n \left(w^{\star}, \boldsymbol{t}\right)\right\vert \leq 2\frac{\left\vert \rho^{\star} - \overline{\rho}_n \right\vert}{n} \left\Vert Bz\right\Vert \left(\left\vert \rho^{\star} + \overline{\rho}_n \right\vert \left\Vert Bz\right\Vert + 2 \left\Vert z\right\Vert\right)\;.
\end{equation*}
The conclusion follows from (\ref{eq:hypRhoRate}) and Lemma \ref{lem:rateXY}.
\end{proof}

\begin{proof}[Proof of Lemma \ref{lem:consistency}]
\cite[proof of Theorem 3]{LM} give the following bounds for any $\boldsymbol{t} \in\mathcal{A}_{n,m} $:
\begin{eqnarray}
K_n\left(w^{\star},\boldsymbol{t} \right) & \geq & \underline{\lambda}^2 \min\left(\frac{1}{n}\max_{1\leq k \leq m}\left\vert t_k - t_{n,k}^{\star}\right\vert , \Delta_{\boldsymbol{\tau}^{\star}}\right)\;, \label{eq:LMbounds}\\
V_n\left(w^{\star},\boldsymbol{t}\right) & \geq & -\frac{2\left(m+1\right)}{n\Delta_n}\left(\max_{1\leq s\leq n} \left(\sum_{i=1}^s \epsilon_i\right)^2 + \max_{1\leq s\leq n} \left(\sum_{i=n-s}^{n} \epsilon_i\right)^2\right)\;, \label{eq:LMboundV}\\
\left\vert  W_n\left(w^{\star},\boldsymbol{t} \right) \right\vert & \leq & \frac{3\left(m+1\right)^2 \overline{\lambda}}{n} \left(\max_{1\leq s\leq n} \left\vert\sum_{i=1}^s \epsilon_i\right\vert + \max_{1\leq s\leq n} \left\vert \sum_{i=n-s}^{n} \epsilon_i\right\vert \right)\;,
\end{eqnarray}
where $\Delta_{\boldsymbol{\tau}^{\star}}$, $\underline{\lambda}$ and $\overline{\lambda}$ are defined in (\ref{eq:Delta_tau*}), (\ref{eq:underline_lambda}) and 
(\ref{eq:overline_lambda}), respectively.
For any $\nu>0$, define, as in \cite[proof of Theorem 3]{LM},
\begin{equation}\label{eq:Cn_alpha}
\mathcal{C}_{n,m,\nu} = \left\lbrace \boldsymbol{t}\in \mathcal{A}_{n,m} ; \left\Vert \boldsymbol{t} - \boldsymbol{t}_{n}^{\star}\right\Vert_{\infty} \geq n\nu\right\rbrace \;.
\end{equation}
For $0<\nu<\Delta_{\boldsymbol{\tau}^{\star}}$, we have:
\begin{eqnarray}\label{eq:ineq_alpha}
P\left(\left\Vert \boldsymbol{\widehat{t}}_n(z, \overline{\rho}_n) - \boldsymbol{t}_{n}^{\star} \right\Vert_{\infty} \geq n\nu\right)  & \leq & P \left( \min_{\boldsymbol{t}\in\mathcal{C}_{n,m,\nu}} J_n \left(\overline{w}, \boldsymbol{t} \right) \leq 0\right) \\
 & \leq &P \left( \min_{\boldsymbol{t}\in\mathcal{C}_{n,m,\nu}} \left(J_n \left(\overline{w}, \boldsymbol{t} \right) - J_n \left(w^{\star}, \boldsymbol{t}\right) \right)\leq -\nu\underline{\lambda}^2 \right)\nonumber \\
 & + & P \left( \min_{\boldsymbol{t}\in\mathcal{C}_{n,m,\nu}} \left(V_n\left(w^{\star}, \boldsymbol{t}\right) + W_n\left(w^{\star}, \boldsymbol{t}\right)\right) \leq  -\nu\underline{\lambda}^2 \right)\nonumber \\
 & \leq & P \left( \min_{\boldsymbol{t}\in\mathcal{C}_{n,m,\nu}} \left(J_n \left(\overline{w}, \boldsymbol{t} \right) - J_n \left(w^{\star}, \boldsymbol{t} \right)\right)\leq -\nu\underline{\lambda}^2 \right)\nonumber\\
 & + & P \left( \max_{1\leq s\leq n}{\left(\sum_{i=1}^s \epsilon_i\right)^2} + \max_{1 \leq s \leq n} \left(\sum_{i=n-s}^{n} \epsilon_i\right)^2 \geq c\underline{\lambda}^2 n\Delta_n \nu \right)\nonumber\\
 & + & P \left( \max_{1 \leq s \leq n}{\left\vert\sum_{i=1}^s \epsilon_i\right\vert} + \max_{1 \leq s \leq n} \left\vert\sum_{i=n-s}^{n} \epsilon_i\right\vert \geq c\underline{\lambda}^2 n \nu \overline{\lambda}^{-1} \right) \nonumber
\end{eqnarray}
\\
for some positive constant $c$. The last two terms of this sum go to $0$ when $n$ goes to infinity (see \cite[proof of Theorem 3]{LM}). To show that the first term shares the same property, it suffices to show that $J_n \left(\overline{w}, \boldsymbol{t}\right) - J_n\left(w^{\star},\boldsymbol{t}\right)$ is bounded uniformly in $\boldsymbol{t}$ by a sequence of random variables which converges to $0$ in probability. This result holds by Lemma \ref{lem:BoundedUnifBound}.
\end{proof}

\begin{proof}[Proof of Lemma \ref{lem:almost72LM}]
Using \cite[(64),(65) and (66)]{LM}, one can show the bound \cite[(73)]{LM} on 
$$P\left(\min_{\boldsymbol{t}\in \mathcal{C}_{\nu, \gamma, n, m}'\left(\mathcal{I}\right)} \left(K_n\left(w^{\star},\boldsymbol{t}\right) + V_n\left(w^{\star},\boldsymbol{t}\right) + W_n\left(w^{\star},\boldsymbol{t}\right)\right)\leq 0 \right).$$
 Using the same arguments, we have the same bound on 
$$P\left(\min_{\boldsymbol{t} \in \mathcal{C}_{\nu, \gamma, n, m}'\left(\mathcal{I}\right)} \left(\frac{1}{2} K_n\left(w^{\star},\boldsymbol{t} \right) + V_n\left(w^{\star},\boldsymbol{t} \right) + W_n\left(w^{\star},\boldsymbol{t} \right)\right)\leq 0 \right).$$
We conclude using \cite[(67)-(71)]{LM}.
\end{proof}

\begin{proof}[Proof of Lemma \ref{lem:almost72LM2}]
By (\ref{eq:Jn_decomp}),
\begin{multline*}
P\left(\min_{\boldsymbol{t}\in \mathcal{C}_{\nu, \gamma, n, m}'\left(\mathcal{I}\right)} J_n\left(\overline{w},\boldsymbol{t}\right)\leq 0\right)\\
\leq P\left(\min_{\boldsymbol{t}\in \mathcal{C}_{\nu, \gamma, n, m}'\left(\mathcal{I}\right)} \left(J_n\left(\overline{w},\boldsymbol{t}\right) - J_n\left(w^{\star},\boldsymbol{t}\right) + \frac{1}{2} K_n\left(w^{\star},\boldsymbol{t}\right)\right)\leq 0\right)\\
 +  P\left(\min_{\boldsymbol{t}\in \mathcal{C}_{\nu, \gamma, n, m}'\left(\mathcal{I}\right)} \left(\frac{1}{2} K_n\left(w^{\star},\boldsymbol{t}\right) + V_n\left(w^{\star},\boldsymbol{t}\right) + W_n\left(w^{\star},\boldsymbol{t}\right)\right)\leq 0 \right)\;.
\end{multline*}
By Lemma \ref{lem:almost72LM}, the conclusion thus follows if
\begin{equation*}
P \left( \min_{ \boldsymbol{t} \in \mathcal{C}_{\nu, \gamma, n, m}' \left(\mathcal{I}\right) } 
\left(J_n \left( \overline{w}, \boldsymbol{t} \right) 
- J_n \left( w^{\star}, \boldsymbol{t} \right) 
+ \frac{1}{2} K_n \left( w^{\star} , \boldsymbol{t} \right)\right) \leq 0 \right ) 
 \longrightarrow 0\;, \textrm{ as } n\to\infty\;.
\end{equation*}
Since $\underset{\boldsymbol{t}\in \mathcal{C}_{\nu, \gamma, n, m}'\left(\mathcal{I}\right)}{\min} K_n\left(w^{\star},\boldsymbol{t}\right) \geq \left(1-\gamma\right)\Delta_{\boldsymbol{\tau}^{\star}}\nu$ (see \cite[(65)]{LM}), 
$$P\left( \min_{\boldsymbol{t}\in \mathcal{C}_{\nu, \gamma, n, m}'\left(\mathcal{I}\right)} \left( J_n\left(\overline{w},\boldsymbol{t}\right)-J_n\left(w^{\star},\boldsymbol{t}\right)+\frac{1}{2} K_n\left(w^{\star},\boldsymbol{t}\right)\right)\leq 0\right)$$
$$ \leq P\left(\min_{\boldsymbol{t}\in \mathcal{C}_{\nu, \gamma, n, m}'\left(\mathcal{I}\right)} \left(J_n\left(\overline{w},\boldsymbol{t} \right)-J_n\left(w^{\star},\boldsymbol{t}\right)\right) \leq \frac{1}{2} \left(\gamma-1\right)\Delta_{\boldsymbol{\tau}^{\star}}\nu\right)\;, $$
 and we conclude by Lemma \ref{lem:BoundedUnifBound}.
\end{proof}

\begin{proof}[Proof of Lemma \ref{lem:rateT}]For notational simplicity, $\boldsymbol{\widehat{t}}_n(z, \overline{\rho}_n)$ will be replaced by $\boldsymbol{\overline{t}}_n$ in this proof.
Since for any $\nu>0$,
$$
P\left(\Vert  \boldsymbol{\overline{t}}_n - \boldsymbol{t}_n^{\star}\Vert_\infty< \nu \underline{\lambda}^{-2}\right)
=P( \Vert\boldsymbol{\overline{t}}_n - \boldsymbol{t}_n^{\star} \Vert_\infty\leq n \gamma \Delta_{\boldsymbol{\tau}^{\star}} ) - P( \boldsymbol{\overline{t}}_n \in \mathcal{C}_{\nu , \gamma ,n , m })\;,
$$
it is enough, by Lemma \ref{lem:consistency}, to prove that
\begin{equation*}
P\left(\boldsymbol{\overline{t}}_n \in \mathcal{C}_{\nu, \gamma, n, m}\right) \longrightarrow 0\;,\; \textrm{as } n\to\infty\;,
\end{equation*}
for all $\nu>0$ and $0<\gamma<1/2 $.
Since
$$  \mathcal{C}_{\nu, \gamma, n, m} = \underset{\mathcal{I}\subset\lbrace 1,\dots ,m\rbrace}{\bigcup} \mathcal{C}_{\nu, \gamma, n, m}\cap \left\lbrace \boldsymbol{t}\in\mathcal{A}_{n,m}; \forall k \in \mathcal{I}, t_k\geq t_{n,k}^{\star}\right\rbrace \; , $$
we shall only study one set in the union without loss of generality and prove that
\begin{equation*}
P\left(\boldsymbol{\overline{t}}_n \in \mathcal{C}_{\nu, \gamma, n, m}'\right) \longrightarrow 0\;,\; \textrm{as } n\to\infty\;,
\end{equation*}
where $\mathcal{C}_{\nu, \gamma, n, m}'$ is defined in (\ref{eq:C'_alpha_gamma_n}). Since
$\mathcal{C}_{\nu, \gamma, n, m}' = \underset{\mathcal{I}\subset\lbrace 1,\dots ,m\rbrace}{\bigcup} \mathcal{C}_{\nu, \gamma, n, m}'\left(\mathcal{I}\right)$,
we shall only study one set in the union without loss of generality and prove that
\begin{equation*}
P\left(\boldsymbol{\overline{t}}_n \in \mathcal{C}_{\nu, \gamma, n, m}'\left(\mathcal{I}\right)\right) \longrightarrow 0\;,\; \textrm{as } n\to\infty\;.
\end{equation*}
Since 
$$ P\left(\boldsymbol{\overline{t}}_n \in \mathcal{C}_{\nu, \gamma, n, m}'\left(\mathcal{I}\right)\right) \leq P\left(\min_{\boldsymbol{t} \in \mathcal{C}_{\nu, \gamma, n, m}'\left(\mathcal{I}\right)} J_n\left(\overline{w},\boldsymbol{t} \right)\leq 0\right)\;,$$
the proof is complete by Lemma \ref{lem:almost72LM2}.

\end{proof}

\begin{proof}[Proof of Proposition \ref{Prop:Segment}]For notational simplicity, $\boldsymbol{\widehat{\delta}}_n(z, \overline{\rho}_n)$ will be replaced by $\boldsymbol{\overline{\delta}}_n$ in this proof.
By Lemma \ref{lem:rateT}, the last result to show is
\begin{equation*}
\Vert \boldsymbol{\overline{\delta}_n}-\boldsymbol{\delta}^{\star}\Vert = O_P\left(n^{-1/2}\right)\;,
\end{equation*}
that is, for all $k$,
$
\overline{\delta}_{n,k} - \delta_k^{\star} = O_P\left(n^{-1/2}\right).
$
By \eqref{eq:decor} and \eqref{eq:overline_x},
\begin{eqnarray*}
\overline{\delta}_{n,k} & = & \frac{1}{\overline{t}_{n,k+1}-\overline{t}_{n,k}} \sum_{i=\overline{t}_{n,k}+1}^{\overline{t}_{n,k+1}}\overline{w}_i\\
 & = & \frac{1}{n\left(\overline{\tau}_{n,k+1}-\overline{\tau}_{n,k}\right)}\left( \sum_{i=\overline{t}_{n,k}+1}^{\overline{t}_{n,k+1}}w_i^{\star} + \left(\rho^{\star} 
-\overline{\rho}_n\right)\sum_{i=\overline{t}_{n,k}+1}^{\overline{t}_{n,k+1}}z_{i-1} \right)\;.
\end{eqnarray*}
By the Cauchy-Schwarz inequality,
\begin{eqnarray*}
\left\vert \sum_{i=\overline{t}_{n,k}+1}^{\overline{t}_{n,k+1}}z_{i-1} \right\vert & \leq & \left(\overline{t}_{n,k+1}-\overline{t}_{n,k}\right)^{1/2}
\left(z_{\overline{t}_{n,k}}^2+ \dots +z_{\overline{t}_{n,k+1}-1}^2\right)^{1/2} 
  \leq  n^{1/2} \left\Vert Bz \right\Vert
  =  O_P\left(n\right)\;,
\end{eqnarray*}
where the last equality comes from Lemma \ref{lem:rateXY}. Hence by (\ref{eq:hypRhoRate}) and Lemma \ref{lem:rateT},
\begin{eqnarray*}
\overline{\delta}_{n,k} & = & \frac{1}{n\left(\overline{\tau}_{n,k+1}-\overline{\tau}_{n,k}\right)} \sum_{i=\overline{t}_{n,k}+1}^{\overline{t}_{n,k+1}}w_i^{\star} + O_P\left(n^{-1/2}\right)\\
 & = & \frac{1}{n\left(\overline{\tau}_{n,k+1}-\overline{\tau}_{n,k}\right)} \left(\sum_{i=\overline{t}_{n,k}+1}^{\overline{t}_{n,k+1}}\mathbb{E} w_i^{\star} + \sum_{i=\overline{t}_{n,k}+1}^{\overline{t}_{n,k+1}} \epsilon_i\right) + O_P\left(n^{-1/2}\right),
\end{eqnarray*}
where the last equality comes from (\ref{eq:modele_matriciel}) and (\ref{eq:decor}).
Let us now prove that 
\begin{equation}\label{eq:clt_aleatoire}
\frac{1}{n\left(\overline{\tau}_{n,k+1}-\overline{\tau}_{n,k}\right)} \sum_{i=\overline{t}_{n,k}+1}^{\overline{t}_{n,k+1}} \epsilon_i = O_P\left(n^{-1/2}\right)\;.
\end{equation}
By Lemma \ref{lem:consistency}, $n^{-1}\left(\overline{\tau}_{n,k+1}-\overline{\tau}_{n,k}\right)^{-1}=O_P(n^{-1})$. Moreover,
\begin{equation}\label{eq:eps_decomp}
\sum_{i=\overline{t}_{n,k}+1}^{\overline{t}_{n,k+1}} \epsilon_i = \sum_{i=t_{n,k}^{\star}+1}^{t_{n,k+1}^{\star}} \epsilon_i \pm \sum_{i=\overline{t}_{n,k}+1}^{t_{n,k}^{\star}} \epsilon_i \pm \sum_{i=t_{n,k+1}^{\star}}^{\overline{t}_{n,k+1}+1} \epsilon_i\;.
\end{equation}
By the {Chebyshev inequality}, the first term in the rhs of \eqref{eq:eps_decomp} is $O_P(n^{1/2})$. By using the Cauchy-Schwarz inequality, we get that 
the second term of \eqref{eq:eps_decomp} satisfies:
$|\sum_{i=\overline{t}_{n,k}+1}^{t_{n,k}^{\star}} \epsilon_i|\leq |t_{n,k}^{\star}-\overline{t}_{n,k}|^{1/2} \left(\sum_{i=1}^n  \epsilon_i^2\right)^{1/2}=O_P(1) O_P(n^{1/2})=O_P(n^{1/2})$, by Lemma 
\ref{lem:rateT}. The same holds for the last term in the rhs of \eqref{eq:eps_decomp}, which gives \eqref{eq:clt_aleatoire}.
Hence,
\begin{eqnarray*}
\overline{\delta}_{n,k} - \delta_k^{\star} & = & \frac{1}{n\left(\overline{\tau}_{n,k+1}-\overline{\tau}_{n,k}\right)} \sum_{i=\overline{t}_{n,k}+1}^{\overline{t}_{n,k+1}}\left(\mathbb{E} w_i^{\star} - \delta_k^{\star}\right) + O_P\left(n^{-1/2}\right)\\
 & = &  \frac{1}{n\left(\overline{\tau}_{n,k+1}-\overline{\tau}_{n,k}\right)} \sum_{i\in\left\lbrace\overline{t}_{n,k}+1,\dots ,\overline{t}_{n,k+1}\right\rbrace \setminus \left\lbrace t_{n,k}^{\star}+1,\dots ,t_{n,k+1}^{\star}\right\rbrace}\left(\mathbb{E} w_i^{\star} - \delta_k^{\star}\right) + O_P\left(n^{-1/2}\right)\;,
\end{eqnarray*}
and then
\begin{eqnarray*}
\left\vert \overline{\delta}_{n,k} - \delta_k^{\star} \right\vert & \leq & \frac{1}{n\left(\overline{\tau}_{n,k+1}-\overline{\tau}_{n,k}\right)} \sharp\left\lbrace\overline{t}_{n,k}+1,\dots ,\overline{t}_{n,k+1}\right\rbrace \setminus \left\lbrace t_{n,k}^{\star}+1,\dots ,t_{n,k+1}^{\star}\right\rbrace \max_{l= 0,\dots,m}\left\vert \delta_l^{\star} - \delta_k^{\star}\right\vert \\
 & + & O_P\left(n^{-1/2}\right)\;.
\end{eqnarray*}

We conclude by using Lemma \ref{lem:rateT} to get $\sharp\left\lbrace\overline{t}_{n,k}+1,\dots ,\overline{t}_{n,k+1}\right\rbrace \setminus \left\lbrace t_{n,k}^{\star}+1,\dots ,t_{n,k+1}^{\star}\right\rbrace = O_P\left(1\right)$ and Lemma \ref{lem:consistency} to get $\left(\overline{\tau}_{n,k+1}-\overline{\tau}_{n,k}\right)^{-1} = O_P\left(1\right)$.

\end{proof}

\subsection{Proof of Proposition \ref{Prop:Segment2}}\label{subsec:prop:Segment2}

The connection between models \eqref{eq:modele_new} and \eqref{eq:bkw} is made by the following lemmas.

\begin{lemma} \label{Lem:YZ}
 Let $(y_0, \dots y_n)$ be defined by \eqref{eq:modele_new} and let
 \begin{eqnarray}
 v^{\star}_i & = & y_i - \rho^{\star} y_{i-1}\label{eq:v_star},\\
 \Delta^{\star}_i & = & \begin{cases}
-\rho^{\star} \left(\mu^{\star}_k  - \mu^{\star}_{k-1}\right) \textit{ if } i = t_{n,k}^{\star}+1,\\
0, \textit{ otherwise, }
\end{cases} \label{eq:delta_star}
 \end{eqnarray} where the $\mu_k^{\star}$'s are defined in \eqref{eq:modele_new}, then the process 
\begin{equation}\label{eq:vw_star}
 w^{\star}_i = v^{\star}_i + \Delta^{\star}_i
\end{equation}
has the same distribution as $z_i - \rho^{\star} z_{i-1}$ where $(z_0, \dots z_n)$ is defined by \eqref{eq:bkw}. Such a process $(z_0, \dots z_n)$ can be constructed recursively as

\begin{equation}\label{eq:z_rec}
\begin{cases}
z_0 & =  y_0\\
z_i & =  w^{\star}_i + \rho^{\star} z_{i-1} \textit{ for } i>0.
\end{cases}
\end{equation}
\end{lemma}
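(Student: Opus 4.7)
The plan is a direct computation in two cases (interior of a segment vs.\ boundary) to identify $w^\star$ with the decorrelated version of a process satisfying \eqref{eq:bkw}, driven by the same innovation sequence $(\epsilon_i)$.

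First, I would expand $v_i^\star = y_i - \rho^\star y_{i-1}$ using \eqref{eq:modele_new}. For $i$ such that both $i$ and $i-1$ belong to the same segment $k$ (i.e.\ $t_{n,k}^\star + 1 < i \leq t_{n,k+1}^\star$), one has $y_i = \mu_k^\star + \eta_i$ and $y_{i-1} = \mu_k^\star + \eta_{i-1}$, so that using \eqref{eq:ar1},
\begin{equation*}
v_i^\star = (1-\rho^\star)\mu_k^\star + (\eta_i - \rho^\star \eta_{i-1}) = \delta_k^\star + \epsilon_i,
\end{equation*}
and $\Delta_i^\star = 0$ so $w_i^\star = \delta_k^\star + \epsilon_i$. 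For $i = t_{n,k}^\star + 1$ (the first index of segment $k$, $k\geq 1$), we have $y_i = \mu_k^\star + \eta_i$ and $y_{i-1} = \mu_{k-1}^\star + \eta_{i-1}$, whence $v_i^\star = \mu_k^\star - \rho^\star \mu_{k-1}^\star + \epsilon_i$. The key calculation is then
\begin{equation*}
w_i^\star = v_i^\star + \Delta_i^\star = \mu_k^\star - \rho^\star \mu_{k-1}^\star - \rho^\star(\mu_k^\star - \mu_{k-1}^\star) + \epsilon_i = (1-\rho^\star)\mu_k^\star + \epsilon_i = \delta_k^\star + \epsilon_i,
\end{equation*}
so that the correction $\Delta_i^\star$ precisely absorbs the mean jump and restores the ``same-segment'' formula.

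Next, I would carry out the symmetric computation for a process $(z_i)$ obeying \eqref{eq:bkw}. By the defining equation, for every $i$ with $t_{n,k}^\star + 1 \leq i \leq t_{n,k+1}^\star$,
\begin{equation*}
z_i - \rho^\star z_{i-1} = (1-\rho^\star)\mu_k^\star + \epsilon_i = \delta_k^\star + \epsilon_i,
\end{equation*}
which coincides with the expression obtained for $w_i^\star$. Consequently, if the two processes are driven by the same $(\epsilon_i)$ and started from $z_0 = y_0$ (whose marginal distribution matches the prescribed law for $z_0$ in \eqref{eq:bkw}), then $w_i^\star = z_i - \rho^\star z_{i-1}$ identically, which yields the equality in distribution.

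Finally, for the recursive construction \eqref{eq:z_rec}, I would verify by induction on $i$ that the process defined by $z_0 = y_0$ and $z_i = w_i^\star + \rho^\star z_{i-1}$ satisfies \eqref{eq:bkw}: using the identity $w_i^\star = \delta_k^\star + \epsilon_i$ established above, one gets $z_i - \rho^\star z_{i-1} - \mu_k^\star = -\rho^\star \mu_k^\star + \epsilon_i$, hence $z_i - \mu_k^\star = \rho^\star(z_{i-1} - \mu_k^\star) + \epsilon_i$, as required. There is no real obstacle here; the only subtlety to be careful about is the treatment of the boundary indices $i = t_{n,k}^\star + 1$, where the definition of $\Delta_i^\star$ must exactly cancel the mean-shift contribution $-\rho^\star(\mu_k^\star - \mu_{k-1}^\star)$ coming from $-\rho^\star y_{i-1}$.
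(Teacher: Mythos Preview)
Your proposal is correct and follows essentially the same approach as the paper: a two-case computation (interior vs.\ boundary index) showing that $w_i^\star=\delta_k^\star+\epsilon_i$, and then verifying that the recursively defined $z$ satisfies \eqref{eq:bkw}. The paper's proof is just a slightly more compact version of the same calculation, starting directly from the recursion \eqref{eq:z_rec} and reducing $(z_i-\mu_k^\star)-\rho^\star(z_{i-1}-\mu_k^\star)$ to $\epsilon_i$.
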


\begin{lemma}\label{Lem:YZbar}
 Let $(y_0, \dots y_n)$ be defined by \eqref{eq:modele_new} and let $z$ be defined by (\ref{eq:v_star}-- \ref{eq:z_rec}). Then
\begin{equation}\label{eq:vw}
\overline{w}_i = \overline{v}_i + \overline{\Delta}_i
\end{equation}
where
\begin{eqnarray}
\overline{v}_i & = & y_i - \overline{\rho}_n y_{i-1}\label{eq:v_bar}\\
\overline{w}_i & = & z_i - \overline{\rho}_n z_{i-1}\label{eq:w_bar}\\
\overline{\Delta}_i & = & \Delta^{\star}_i + \left(\rho^{\star} - \overline{\rho}_n\right)\left(z_{i-1}-y_{i-1}\right)\; .\label{eq:Delta_bar}
\end{eqnarray}
\end{lemma}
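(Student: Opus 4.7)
The statement is a purely algebraic identity linking the two decorrelated series $\overline{v}$ (obtained by applying the estimated filter $1-\overline{\rho}_n L$ to the original process $y$) and $\overline{w}$ (the same filter applied to the auxiliary process $z$ of Lemma \ref{Lem:YZ}). My plan is to establish \eqref{eq:vw} by a direct two-step computation, using Lemma \ref{Lem:YZ} as the only nontrivial input.

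First, I would rewrite $\overline{w}_i = z_i - \overline{\rho}_n z_{i-1}$ by adding and subtracting $\rho^{\star} z_{i-1}$, which yields
\begin{equation*}
\overline{w}_i = \bigl(z_i - \rho^{\star} z_{i-1}\bigr) + \bigl(\rho^{\star} - \overline{\rho}_n\bigr) z_{i-1}.
\end{equation*}
By the recursive construction \eqref{eq:z_rec} of $z$, together with \eqref{eq:vw_star} and \eqref{eq:v_star} from Lemma \ref{Lem:YZ}, the first parenthesis equals $w^{\star}_i = v^{\star}_i + \Delta^{\star}_i = y_i - \rho^{\star} y_{i-1} + \Delta^{\star}_i$.

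Second, I would perform the symmetric manipulation on $y$: adding and subtracting $\overline{\rho}_n y_{i-1}$ gives
\begin{equation*}
y_i - \rho^{\star} y_{i-1} = \overline{v}_i + (\overline{\rho}_n - \rho^{\star}) y_{i-1}.
\end{equation*}
Substituting this back, the two correction terms combine into $(\rho^{\star} - \overline{\rho}_n)(z_{i-1} - y_{i-1})$, which is precisely $\overline{\Delta}_i - \Delta^{\star}_i$ by the definition \eqref{eq:Delta_bar}, so that $\overline{w}_i = \overline{v}_i + \Delta^{\star}_i + (\rho^{\star} - \overline{\rho}_n)(z_{i-1} - y_{i-1}) = \overline{v}_i + \overline{\Delta}_i$.

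There is no real obstacle here: the statement is an identity rather than a convergence result, and the proof is a one-line telescoping of the difference $\rho^{\star} - \overline{\rho}_n$ across the two processes. The only care needed is to track which recursion (that of $y$ versus that of $z$) is being used at each step, since $y$ jumps at the change-points while $z$ does not; this asymmetry is exactly what is absorbed into $\Delta^{\star}_i$ via Lemma \ref{Lem:YZ} and then propagated into $\overline{\Delta}_i$ through the extra term $(\rho^{\star} - \overline{\rho}_n)(z_{i-1} - y_{i-1})$.
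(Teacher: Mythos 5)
Your computation is correct and is exactly the verification the paper has in mind: the paper's own proof simply states that the lemma is ``straightforward,'' and your two-step telescoping of $\rho^{\star}-\overline{\rho}_n$ across $z$ and $y$, using $z_i-\rho^{\star}z_{i-1}=w^{\star}_i=v^{\star}_i+\Delta^{\star}_i$ from Lemma \ref{Lem:YZ}, is the natural (indeed the only reasonable) way to fill that in. Nothing is missing.
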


\begin{lemma} \label{Lem:Delta_order}
 Let $\overline{\Delta}=(\overline{\Delta}_i)_{0\leq i\leq n}$ as defined in \eqref{eq:Delta_bar}. 
Then $\left\Vert\overline{\Delta}\right\Vert = O_P\left(1\right)$.
\end{lemma}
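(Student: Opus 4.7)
The plan is to split $\overline{\Delta}$ into its deterministic jump part $\Delta^{\star}$ and the stochastic perturbation $(\rho^{\star}-\overline{\rho}_n)(z-y)_{\cdot -1}$, bound each piece separately, and combine by the triangle inequality. Using the elementary bound $\|\overline{\Delta}\|^2 \leq 2\|\Delta^{\star}\|^2 + 2(\rho^{\star}-\overline{\rho}_n)^2\sum_{i=1}^n(z_{i-1}-y_{i-1})^2$, the goal reduces to controlling the two sums on the right.

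The first piece is immediate: by \eqref{eq:delta_star}, $\Delta^{\star}_i$ is nonzero only at the $m^{\star}$ change-point indices, so
\[
\|\Delta^{\star}\|^2 \;=\; (\rho^{\star})^2 \sum_{k=1}^{m^{\star}} (\mu^{\star}_k - \mu^{\star}_{k-1})^2 \;=\; O(1),
\]
a fixed constant independent of $n$.

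For the second piece, I would first derive a closed-form expression for $z_i-y_i$. Combining $v^{\star}_i = y_i - \rho^{\star} y_{i-1}$ from \eqref{eq:v_star} with the recursion \eqref{eq:z_rec} and \eqref{eq:vw_star} gives $z_i - y_i = \Delta^{\star}_i + \rho^{\star}(z_{i-1}-y_{i-1})$, with initial condition $z_0 - y_0 = 0$. Iterating, $z_i - y_i = \sum_{j=1}^{i}(\rho^{\star})^{i-j}\Delta^{\star}_j$, which is a sum of at most $m^{\star}$ nonzero terms. Using Cauchy--Schwarz (or simply the triangle inequality followed by a geometric series) together with $|\rho^{\star}|<1$, I would then bound
\[
\sum_{i=0}^{n}(z_i-y_i)^2 \;\leq\; m^{\star}\sum_{k=1}^{m^{\star}}(\Delta^{\star}_{t^{\star}_{n,k}+1})^2 \sum_{i \geq 0}(\rho^{\star})^{2i} \;=\; O(1),
\]
so $\|z-y\|^2 = O(1)$ deterministically (not just in probability).

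Finally, applying the rate assumption \eqref{eq:hypRhoRate}, namely $(\rho^{\star}-\overline{\rho}_n)^2 = O_P(n^{-1})$, to the second piece yields $(\rho^{\star}-\overline{\rho}_n)^2\|z-y\|^2 = O_P(n^{-1}) = o_P(1)$. Combined with $\|\Delta^{\star}\|^2 = O(1)$, this gives $\|\overline{\Delta}\|^2 = O_P(1)$, hence $\|\overline{\Delta}\| = O_P(1)$. There is no substantive obstacle here; the only thing to verify carefully is that the geometric decay indeed survives the squaring and summation over $i$, which it does because $|\rho^{\star}|<1$ is fixed and the number of jumps $m^{\star}$ is fixed.
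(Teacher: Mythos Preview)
Your proof is correct but takes a different route from the paper. The paper writes $\overline{\Delta}=\Delta^{\star}+(\rho^{\star}-\overline{\rho}_n)(By-Bz)$ and then applies the crude triangle inequality $\|By-Bz\|\leq\|By\|+\|Bz\|$, invoking Lemma~\ref{lem:rateXY} to get $\|By\|,\|Bz\|=O_P(n^{1/2})$; combined with \eqref{eq:hypRhoRate} this yields the $O_P(1)$ bound. You instead unwind the recursion \eqref{eq:z_rec} to obtain the explicit formula $z_i-y_i=\sum_{j\leq i}(\rho^{\star})^{i-j}\Delta^{\star}_j$ and use the geometric decay together with the fact that $\Delta^{\star}$ has only $m^{\star}$ nonzero entries to conclude $\|Bz-By\|^2=O(1)$ \emph{deterministically}. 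Your argument is sharper (the stochastic perturbation term is actually $o_P(1)$, not just $O_P(1)$) and self-contained, avoiding any appeal to Lemma~\ref{lem:rateXY}; the paper's version is shorter because it recycles an existing lemma, at the price of a coarser intermediate bound.
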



\begin{proof}[Proof of Lemma \ref{Lem:YZ}]
Let $z$ being defined by \eqref{eq:z_rec}. Using \eqref{eq:vw_star}, we get, for all $0\leq k \leq m , t_{n,k}^{\star}<i\leq t_{n,k+1}^{\star}$
\begin{multline*}
\left(z_i - \mu_k^{\star} \right) - \rho^{\star} \left(z_{i-1} - \mu_k^{\star}\right) = \left(y_i - \mu_k^{\star}\right) - \rho^{\star} \left(y_{i-1} - \mu_k^{\star}\right) + \Delta_i^{\star} \\
 = 
\begin{cases}
\left(y_i - \mu_k^{\star}\right) - \rho^{\star} \left(y_{i-1} - \mu_{k-1}^{\star}\right) \textit{ if } i = t_{n,k}^{\star}+1\\
\left(y_i - \mu_k^{\star}\right) - \rho^{\star} \left(y_{i-1} - \mu_k^{\star}\right) \textit{ otherwise.}
\end{cases}
\end{multline*}
This expression equals $\left(y_i - \PE \left(y_i\right)\right) - \rho^{\star} \left(y_{i-1} - \PE\left(y_{i-1}\right)\right) = \eta_i - \rho^{\star} \eta_{i-1} = \epsilon_i$ by \eqref{eq:modele_new} and \eqref{eq:ar1}. Then $z$ satisfies \eqref{eq:bkw}.
\end{proof}

\begin{proof}[Proof of Lemma \ref{Lem:YZbar}]
The proof of Lemma \ref{Lem:YZbar} is straightforward.
\end{proof}

\begin{proof}[Proof of Lemma \ref{Lem:Delta_order}]
\eqref{eq:Delta_bar} can be written as
$$
\overline{\Delta} = \Delta^{\star} + \left(\rho^{\star} - \overline{\rho}_n\right)\left(By-Bz\right)
$$
where $\Delta^{\star}=\left(\Delta_i^{\star}\right)_{1\leq i\leq n}$, $By = \left(y_{i-1}\right)_{1\leq i\leq n}$  and $Bz$ is defined in \eqref{eq:YXE}. By the triangle inequality,
\begin{equation}\label{eq:triangular_Delta_bar}
\left\Vert\overline{\Delta}\right\Vert \leq \left\Vert\Delta^{\star}\right\Vert + \left\vert\rho^{\star} - \overline{\rho}_n\right\vert \left(\left\Vert By\right\Vert+\left\Vert Bz\right\Vert\right).
\end{equation}
Since $\left\Vert\Delta^{\star}\right\Vert$ is constant it is bounded.
The conclusion follows from \eqref{eq:triangular_Delta_bar}, \eqref{eq:hypRhoRate} and Lemma \ref{lem:rateXY}.
\end{proof}


\begin{proof}[Proof of Proposition \ref{Prop:Segment2}]
Let $y$, $z$, $\overline{v}$, $\overline{w}$ and $\overline{\Delta}$ be defined in Lemma \ref{Lem:YZbar}.

Using \eqref{eq:Jnm} and Lemma \ref{Lem:YZbar}, we get
\begin{equation}
J_n\left(\overline{v},\boldsymbol{t}\right) = J_n\left(\overline{w},\boldsymbol{t}\right) + J_n\left(\overline{\Delta},\boldsymbol{t}\right) - \frac{2}{n}\left(\left\langle \pi_{E_{\boldsymbol{t}_n^{\star}}}\left(\overline{w}\right),\pi_{E_{\boldsymbol{t}_n^{\star}}}\left(\overline{\Delta}\right)\right\rangle - \left\langle \pi_{E_{\boldsymbol{t}}}\left(\overline{w}\right),\pi_{E_{\boldsymbol{t}}}\left(\overline{\Delta}\right)\right\rangle\right).
\end{equation}
By the Cauchy-Schwarz inequality and the $1$-Lipschitz property of projections, we have
\begin{eqnarray}\label{eq:CSLipschitz}
\left\vert J_n\left(\overline{\Delta},\boldsymbol{t} \right) \right\vert & \leq & \frac{2}{n} \Vert \overline{\Delta} \Vert^2 \label{eq:CSLipschitz1}, \\
\left\vert \left\langle \pi_{E_{\boldsymbol{t}_n^{\star}}}\left(\overline{w}\right),\pi_{E_{\boldsymbol{t}_n^{\star}}}\left(\overline{\Delta}\right)\right\rangle - \left\langle \pi_{E_{\boldsymbol{t}}}\left(\overline{w}\right),\pi_{E_{\boldsymbol{t}}}\left(\overline{\Delta}\right)\right\rangle \right\vert & \leq & 2 \Vert \overline{\Delta} \Vert \Vert \overline{w} \Vert. \label{eq:CSLipschitz2}
\end{eqnarray}
Note that $\overline{w} = z - \overline{\rho}_n Bz$ thus by the triangle inequality

\begin{equation}\label{eq:triangular}
\Vert \overline{w}\Vert \leq \Vert z \Vert + \vert \overline{\rho}_n \vert \Vert Bz \Vert .
\end{equation}

Since $\vert \overline{\rho}_n \vert = O_P\left(1\right)$, we deduce from Lemma \ref{lem:rateXY}
that $\Vert \overline{w}\Vert=O_P \left(n^{1/2}\right)$. Since, by Lemma \ref{Lem:Delta_order}, $\Vert \overline{\Delta} \Vert = O_P\left(1\right)$, we
obtain that
{\begin{equation}\label{eq:controleunif}
\sup_{\boldsymbol{t}} \left| J_n\left(\overline{\Delta},\boldsymbol{t}\right) - \frac{2}{n}\left(\left\langle \pi_{E_{\boldsymbol{t}_n^{\star}}}\left(\overline{w}\right),\pi_{E_{\boldsymbol{t}_n^{\star}}}\left(\overline{\Delta}\right)\right\rangle - \left\langle \pi_{E_{\boldsymbol{t}}}\left(\overline{w}\right),\pi_{E_{\boldsymbol{t}}}\left(\overline{\Delta}\right)\right\rangle\right) \right| =O_P\left(n^{-1/2}\right).
\end{equation}}
For $0<\nu<\Delta_{\boldsymbol{\tau}^{\star}}$, using \eqref{eq:Jn_decomp} and \eqref{eq:Cn_alpha}, we get:
\begin{eqnarray*}
P\left(\left\Vert \boldsymbol{\overline{t}}_n - \boldsymbol{t}^{\star} \right\Vert_{\infty} \geq \nu\right)  & \leq & P \left( \min_{\boldsymbol{t}\in\mathcal{C}_{n,m,\nu}} J_n \left(\overline{v}, \boldsymbol{t} \right) \leq 0\right)\\
  & \leq & P \left( \min_{\boldsymbol{t} \in\mathcal{C}_{n,m,\nu}} \left\lbrace J_n\left(\overline{w},\boldsymbol{t}\right) + J_n\left(\overline{\Delta},\boldsymbol{t}\right)  \right.\right. \\
  & & \ \ - \left.\left. \frac{2}{n}\left(\left\langle \pi_{E_{\boldsymbol{t}_n^{\star}}}\left(\overline{w}\right),\pi_{E_{\boldsymbol{t}_n^{\star}}}\left(\overline{\Delta}\right)\right\rangle - \left\langle \pi_{E_{\boldsymbol{t}}}\left(\overline{w}\right),\pi_{E_{\boldsymbol{t}}}\left(\overline{\Delta}\right)\right\rangle\right)\right\rbrace \leq 0 \right)\\
 & \leq & P \left( \min_{\boldsymbol{t}\in\mathcal{C}_{n,m,\nu}} \left\lbrace K_n\left(\overline{w},\boldsymbol{t}\right) + V_n\left(\overline{w},\boldsymbol{t}\right) + W_n\left(\overline{w},\boldsymbol{t}\right)+ J_n\left(\overline{\Delta},\boldsymbol{t}\right) \right. \right. \\
  & & \ \ - \left. \left. \frac{2}{n}\left(\left\langle \pi_{E_{\boldsymbol{t}_n^{\star}}}\left(\overline{w}\right),\pi_{E_{\boldsymbol{t}_n^{\star}}}\left(\overline{\Delta}\right)\right\rangle - \left\langle \pi_{E_{\boldsymbol{t}}}\left(\overline{w}\right),\pi_{E_{\boldsymbol{t}}}\left(\overline{\Delta}\right)\right\rangle\right)\right\rbrace \leq 0 \right)\\
  & \leq & P \left( \min_{\boldsymbol{t}\in\mathcal{C}_{n,m,\nu}} \left\lbrace \frac{1}{2}K_n\left(\overline{w},\boldsymbol{t}\right) + V_n\left(\overline{w},\boldsymbol{t}\right) + W_n\left(\overline{w},\boldsymbol{t}\right)\right\rbrace \leq 0 \right)\\
   & & + P \left( \min_{\boldsymbol{t}\in\mathcal{C}_{n,m,\nu}} \left\lbrace \frac{1}{2}K_n\left(\overline{w},\boldsymbol{t}\right) + J_n\left(\overline{\Delta},\boldsymbol{t}\right) \right. \right. \\
   & & \ \ - \left. \left. \frac{2}{n}\left(\left\langle \pi_{E_{\boldsymbol{t}_n^{\star}}}\left(\overline{w}\right),\pi_{E_{\boldsymbol{t}_n^{\star}}}\left(\overline{\Delta}\right)\right\rangle - \left\langle \pi_{E_{\boldsymbol{t}}}\left(\overline{w}\right),\pi_{E_{\boldsymbol{t}}}\left(\overline{\Delta}\right)\right\rangle\right)\right\rbrace \leq 0 \right).
\end{eqnarray*}
Following the proof of Lemma \ref{lem:consistency}, one can prove that 
\begin{equation}\label{eq:demi}
P \left( \underset{\boldsymbol{t}\in\mathcal{C}_{n,m,\nu}}{\min} \left\lbrace\frac{1}{2}K_n\left(\overline{w},\boldsymbol{t}\right) + V_n\left(\overline{w},\boldsymbol{t}\right) + W_n\left(\overline{w},\boldsymbol{t}\right)\right\rbrace \leq 0\right) \underset{n\rightarrow\infty}{\longrightarrow} 0 \; .
\end{equation}

Using \eqref{eq:LMbounds}, we get that
\begin{eqnarray}\label{eq:HalfBound}
P \left( \min_{\boldsymbol{t}\in\mathcal{C}_{n,m,\nu}} \left\lbrace\frac{1}{2}K_n\left(\overline{w},\boldsymbol{t}\right) +  J_n\left(\overline{\Delta},\boldsymbol{t}\right) - \frac{2}{n}\left(\left\langle \pi_{E_{\boldsymbol{t}_n^{\star}}}\left(\overline{w}\right),\pi_{E_{\boldsymbol{t}_n^{\star}}}\left(\overline{\Delta}\right)\right\rangle - \left\langle \pi_{E_{\boldsymbol{t}}}\left(\overline{w}\right),\pi_{E_{\boldsymbol{t}}}\left(\overline{\Delta}\right)\right\rangle\right)\right\rbrace \leq 0\right) & & \\
 \leq P \left( \frac{1}{2}\underline{\lambda}^2 \nu + \min_{\boldsymbol{t}\in\mathcal{C}_{n,m,\nu}}  \left\{ J_n\left(\overline{\Delta},\boldsymbol{t}\right) - \frac{2}{n}\left(\left\langle \pi_{E_{\boldsymbol{t}_n^{\star}}}\left(\overline{w}\right),\pi_{E_{\boldsymbol{t}_n^{\star}}}\left(\overline{\Delta}\right)\right\rangle - \left\langle \pi_{E_{\boldsymbol{t}}}\left(\overline{w}\right),\pi_{E_{\boldsymbol{t}}}\left(\overline{\Delta}\right)\right\rangle\right)\right\} \leq 0\right) & & \nonumber
\end{eqnarray}
which goes to zero when $n$ goes to infinity by \eqref{eq:controleunif}.
Then Lemma \ref{lem:consistency} still holds if $y$ is defined by \eqref{eq:modele_new}.
To show the rate of convergence, we use the same decomposition. As in the proof of Lemma \ref{lem:rateT}, $P\left(\underset{\boldsymbol{t} \in \mathcal{C}_{\nu, \gamma, n, m}'\left(\mathcal{I}\right)}{\min} J_n\left(\overline{v},\boldsymbol{t}\right)\leq 0\right)\underset{n\rightarrow \infty}{\longrightarrow} 0$ for all $\nu>0$ and $0<\gamma <1/2$ is a sufficient condition 
for proving that $P\left(\boldsymbol{\widehat{t}}_n(y,\overline{\rho}_n)\in \mathcal{C}_{\nu, \gamma, n, m}\right) \longrightarrow_{n\rightarrow\infty} 0$, which allows us to conclude on the rate of convergence of the estimated change-points. Note that
\begin{eqnarray*}
P\left(\min_{\boldsymbol{t} \in \mathcal{C}_{ \nu, \gamma, n, m}'\left(\mathcal{I}\right)} J_n\left(\overline{v},\boldsymbol{t}\right) \leq 0\right)  & \leq & P \left( \min_{\boldsymbol{t} \in \mathcal{C}_{\nu, \gamma, n, m}'} 
\left\lbrace\frac{1}{2}K_n\left( \overline{w}, \boldsymbol{t} \right) + V_n \left( \overline{w}, \boldsymbol{t}\right) + W_n \left( \overline{w}, \boldsymbol{t} \right)\right\rbrace \leq 0\right)\\
   & + & P \left( \frac{1}{2}\underline{\lambda}^2 \nu + J_n\left(\overline{\Delta},\boldsymbol{t}\right) \right. \\
   & - & \left. \frac{2}{n}\left(\left\langle \pi_{E_{\boldsymbol{t}_n^{\star}}}\left(\overline{w}\right),\pi_{E_{\boldsymbol{t}_n^{\star}}}\left(\overline{\Delta}\right)\right\rangle - \left\langle \pi_{E_{\boldsymbol{t}}}\left(\overline{w}\right),\pi_{E_{\boldsymbol{t}}}\left(\overline{\Delta}\right)\right\rangle\right) \leq 0 \right).
\end{eqnarray*}
In the latter equation, the second term of the rhs goes to zero as $n$ goes to infinity by \eqref{eq:controleunif}.
The first term of rhs goes to zero when $n$ goes to infinity by following the same line of reasoning as the one of Lemma \ref{lem:almost72LM2}. 
This concludes the proof of Proposition \ref{Prop:Segment2}.

\end{proof}


\subsection{Proof of Proposition \ref{Prop:SelBeta}}\label{subsec:bicbardet}
We shall used in this section the notations introduced in Sections \ref{subsec:prop:Segment} and \ref{subsec:beta}.
 The result derives directly from Lemmas \ref{Lem:infSelBeta} and \ref{Lem:supSelBeta}.
\begin{lemma}\label{Lem:infSelBeta}
Under the assumptions of Proposition \ref{Prop:SelBeta}, $P\left(\widehat{m}=m\right)\underset{n\rightarrow\infty}{\longrightarrow} 0$ if $m<m^{\star}$.
\end{lemma}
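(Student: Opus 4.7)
The plan is to reduce the event $\{\widehat{m}=m\}$, for $m<m^\star$, to a lower bound on the minimal value of $J_n(\overline{w},\cdot)$ over $\mathcal{A}_{n,m}$, and to show that this minimum is bounded below in probability by a constant strictly larger than $\beta_n(m^\star-m)$.

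\textbf{Step 1 (reduction to a $J_n$ inequality).} Since $\widehat{m}=m$ implies $\frac{1}{n}SS_m(z,\overline{\rho}_n)+\beta_n m\leq \frac{1}{n}SS_{m^\star}(z,\overline{\rho}_n)+\beta_n m^\star$, it gives
\begin{equation*}
\frac{1}{n}\bigl(SS_m(z,\overline{\rho}_n)-SS_{m^\star}(z,\overline{\rho}_n)\bigr)\leq \beta_n(m^\star-m).
\end{equation*}
Writing $\frac{1}{n}SS_m(z,\overline{\rho}_n)=\frac{1}{n}\|\overline{w}\|^2-\frac{1}{n}\|\pi_{E_{\widehat{\boldsymbol{t}}^{(m)}}}\overline{w}\|^2$ where $\widehat{\boldsymbol{t}}^{(m)}\in\mathcal{A}_{n,m}$ is the minimizer, and using that $\boldsymbol{t}_n^\star\in\mathcal{A}_{n,m^\star}$ for $n$ large enough (since $\Delta_n=o(n)$ while the true segment lengths are of order $n\Delta_{\boldsymbol{\tau}^\star}$), one gets $\frac{1}{n}SS_{m^\star}(z,\overline{\rho}_n)\leq \frac{1}{n}\|\overline{w}\|^2-\frac{1}{n}\|\pi_{E_{\boldsymbol{t}_n^\star}}\overline{w}\|^2$. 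Substituting yields
\begin{equation*}
\inf_{\boldsymbol{t}\in\mathcal{A}_{n,m}} J_n(\overline{w},\boldsymbol{t})\;\leq\; J_n(\overline{w},\widehat{\boldsymbol{t}}^{(m)})\;\leq\; \beta_n(m^\star-m),
\end{equation*}
so it suffices to show that $P\bigl(\inf_{\boldsymbol{t}\in\mathcal{A}_{n,m}} J_n(\overline{w},\boldsymbol{t})\leq \beta_n(m^\star-m)\bigr)\to 0$.

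\textbf{Step 2 (pass from $\overline{w}$ to $w^\star$ and apply the LM decomposition).} By Lemma \ref{lem:BoundedUnifBound}, $\sup_{\boldsymbol{t}}|J_n(\overline{w},\boldsymbol{t})-J_n(w^\star,\boldsymbol{t})|=o_P(1)$. Using \eqref{eq:Jn_decomp}, I would then control $V_n(w^\star,\boldsymbol{t})$ and $W_n(w^\star,\boldsymbol{t})$ uniformly on $\mathcal{A}_{n,m}$ by the bounds recalled in the proof of Lemma \ref{lem:consistency}: both are $o_P(1)$ uniformly, because the factors $1/\Delta_n$ and $1/n$ in front of the random walk maxima $\max_s|\sum_{i\leq s}\epsilon_i|$ are overwhelmed by the assumption $\Delta_n\to\infty$ (note that the remaining condition $\Delta_n\beta_n\to\infty$ is not needed here, only in the over-segmentation step). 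Consequently,
\begin{equation*}
\inf_{\boldsymbol{t}\in\mathcal{A}_{n,m}} J_n(\overline{w},\boldsymbol{t})\;\geq\; \inf_{\boldsymbol{t}\in\mathcal{A}_{n,m}} K_n(w^\star,\boldsymbol{t})\;-\;o_P(1).
\end{equation*}

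\textbf{Step 3 (deterministic lower bound on $K_n$ when $m<m^\star$).} Since $\PE w^\star=T(\boldsymbol{t}_n^\star)\boldsymbol{\delta}^\star$, the quantity $K_n(w^\star,\boldsymbol{t})$ is exactly the $n^{-1}$-normalized squared $L^2$ distance from the true $(m^\star+1)$-step function to its best $(m+1)$-step approximation with breakpoints in $\boldsymbol{t}$. A pigeonhole argument shows that when $m<m^\star$, at least one true segment (of relative length at least $\Delta_{\boldsymbol{\tau}^\star}$) is left uncut, so the approximation must force two neighbouring true levels to collapse to a common constant, producing an error of order at least $\tfrac{1}{4}\underline{\lambda}^2\Delta_{\boldsymbol{\tau}^\star}/(1-\rho^\star)^2$ (the $(1-\rho^\star)^2$ comes from the passage $\mu^\star\to\delta^\star=(1-\rho^\star)\mu^\star$). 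This yields a deterministic constant $c_{m,m^\star}>0$ such that
\begin{equation*}
\inf_{\boldsymbol{t}\in\mathcal{A}_{n,m}} K_n(w^\star,\boldsymbol{t})\;\geq\; c_{m,m^\star}.
\end{equation*}

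\textbf{Conclusion.} Combining Steps 2 and 3, $\inf_{\boldsymbol{t}\in\mathcal{A}_{n,m}} J_n(\overline{w},\boldsymbol{t})\geq c_{m,m^\star}-o_P(1)$. Since $\beta_n\to 0$, for $n$ large $\beta_n(m^\star-m)<c_{m,m^\star}/2$, and hence $P(\widehat{m}=m)\leq P\bigl(\inf J_n(\overline{w},\boldsymbol{t})\leq c_{m,m^\star}/2\bigr)\to 0$. The main obstacle I anticipate is the clean verification of Step~3, namely producing a pigeonhole-type lower bound on $K_n$ that is uniform in $\boldsymbol{t}\in\mathcal{A}_{n,m}$ and independent of $n$; this is where one must be careful about corner cases where several consecutive jumps happen to cancel in the best $(m+1)$-step approximation.
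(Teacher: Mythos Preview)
Your approach is correct and genuinely different from the paper's. The paper does not try to bound $K_n$ directly on $\mathcal{A}_{n,m}$ for $m<m^\star$. Instead, given any $\boldsymbol{t}\in\mathcal{A}_{n,m}$ it \emph{adds} $m^\star-m$ extra breakpoints to obtain $\boldsymbol{t}'\in\mathcal{A}_{n,m^\star}$ with $E_{\boldsymbol{t}}\subset E_{\boldsymbol{t}'}$, observes that $J_n(\overline w,\boldsymbol{t}')\le J_n(\overline w,\boldsymbol{t})$, and argues that such $\boldsymbol{t}'$ necessarily lies in $\mathcal{C}_{n,m^\star,\nu}$ for a suitable $\nu$ (since with only $m<m^\star$ ``real'' breakpoints at least one true change must be missed). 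This reduces everything to the consistency argument already carried out for $m^\star$ change-points, namely \eqref{eq:ineq_alpha} with $-\nu\underline\lambda^2$ replaced by $\varepsilon-\nu\underline\lambda^2$. Your route is more self-contained: you stay in $\mathcal{A}_{n,m}$ and produce a deterministic lower bound on $K_n$ directly. The paper's trick buys reuse of the existing Lavielle--Moulines bound \eqref{eq:LMbounds} without any new deterministic lemma; your route avoids the embedding step at the cost of proving one easy extra inequality.

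Two small remarks. First, your worry about ``cancellation'' in Step~3 is unfounded: the pigeonhole is cleaner than you suggest. Since the $m^\star$ disjoint intervals $(t_{n,k}^\star-r,t_{n,k}^\star+r)$ with $r=\lfloor n\Delta_{\boldsymbol{\tau}^\star}/3\rfloor$ cannot all contain one of the $m<m^\star$ estimated breakpoints, some $t_{n,k}^\star$ has no estimated breakpoint within distance $r$; on that interval the fitted value is a single constant $c$, and the local contribution to $nK_n$ is at least $r(\delta_{k-1}^\star-c)^2+r(\delta_k^\star-c)^2\ge \tfrac{r}{2}\underline\lambda^2$, giving $K_n\ge \tfrac{1}{6}\underline\lambda^2\Delta_{\boldsymbol{\tau}^\star}$ uniformly in $\boldsymbol{t}$ and $n$. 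No global cancellation can spoil this because the bound uses only the error on that one window. Second, the factor $(1-\rho^\star)^{-2}$ in your constant is superfluous: $\underline\lambda$ in \eqref{eq:underline_lambda} is already defined on the $\delta^\star$-scale, not the $\mu^\star$-scale.
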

\begin{lemma}\label{Lem:supSelBeta}
Under the assumptions of Proposition \ref{Prop:SelBeta}, $P\left(\widehat{m}=m\right)\underset{n\rightarrow\infty}{\longrightarrow} 0$ if $m>m^{\star}$.
\end{lemma}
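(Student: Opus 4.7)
The plan is to prove $P(\widehat m = m) \to 0$ for each fixed $m \in \{m^\star+1, \dots, m_{\max}\}$. By definition of $\widehat m$ in (\ref{Eq:ConsistentCrit}),
\begin{equation*}
P(\widehat m = m) \le P\bigl( SS_{m^\star}(z, \overline\rho_n) - SS_m(z, \overline\rho_n) \ge (m - m^\star)\, n\beta_n \bigr),
\end{equation*}
so it suffices to prove that $(n\beta_n)^{-1}\bigl(SS_{m^\star}(z,\overline\rho_n) - SS_m(z,\overline\rho_n)\bigr) \stackrel{P}{\longrightarrow} 0$. I would split the numerator into a ``plug-in'' error and a ``classical'' over-segmentation term; the two contributions turn out to be controlled precisely by the two hypotheses $n^{1/2}\beta_n \to \infty$ and $\Delta_n \beta_n \to \infty$.

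For the plug-in step, after minimisation in $\boldsymbol\delta$ the criterion reads $SS_m(z,\rho) = \|w\|^2 - \max_{\boldsymbol t\in\mathcal A_{n,m}}\|\pi_{E_{\boldsymbol t}}w\|^2$ with $w = z - \rho Bz$. Writing $\overline w = w^\star + (\rho^\star - \overline\rho_n) Bz$ and expanding as in the proof of Lemma \ref{lem:BoundedUnifBound} gives
\begin{equation*}
\bigl|\|\pi_{E_{\boldsymbol t}}\overline w\|^2 - \|\pi_{E_{\boldsymbol t}}w^\star\|^2\bigr| \le 2|\rho^\star - \overline\rho_n|\,\|w^\star\|\,\|Bz\| + (\rho^\star - \overline\rho_n)^2\|Bz\|^2 = O_P(n^{1/2})
\end{equation*}
uniformly in $\boldsymbol t$ by (\ref{eq:hypRhoRate}) and Lemma \ref{lem:rateXY}, and the same estimate for $\bigl|\|\overline w\|^2 - \|w^\star\|^2\bigr|$. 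Hence $\bigl|SS_m(z,\overline\rho_n) - SS_m(z,\rho^\star)\bigr| = O_P(n^{1/2})$ uniformly in $m \le m_{\max}$.

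For the over-segmentation step at $\rho = \rho^\star$, note that the true segment lengths are $\Theta(n)$, so $\boldsymbol t_n^\star \in \mathcal A_{n,m^\star}$ for $n$ large and hence
\begin{equation*}
0 \le SS_{m^\star}(z,\rho^\star) - SS_m(z,\rho^\star) \le \max_{\boldsymbol t\in\mathcal A_{n,m}}\|\pi_{E_{\boldsymbol t}}w^\star\|^2 - \|\pi_{E_{\boldsymbol t_n^\star}}w^\star\|^2 = -n\min_{\boldsymbol t\in\mathcal A_{n,m}} J_n(w^\star,\boldsymbol t).
\end{equation*}
Using the decomposition (\ref{eq:Jn_decomp}) and $K_n \ge 0$, one has $-J_n(w^\star,\boldsymbol t) \le |V_n(w^\star,\boldsymbol t)| + |W_n(w^\star,\boldsymbol t)|$. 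Plugging the bounds (\ref{eq:LMboundV}) and (\ref{eq:Wn}) used in the proof of Lemma \ref{lem:consistency}, together with the standard maximal estimate $\max_s|\sum_{i=1}^s \epsilon_i| = O_P(n^{1/2})$, yields $\max_{\boldsymbol t}|V_n(w^\star,\boldsymbol t)| = O_P(1/\Delta_n)$ and $\max_{\boldsymbol t}|W_n(w^\star,\boldsymbol t)| = O_P(n^{-1/2})$, so that
\begin{equation*}
SS_{m^\star}(z,\rho^\star) - SS_m(z,\rho^\star) = O_P\bigl(n/\Delta_n + n^{1/2}\bigr).
\end{equation*}

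Combining the two steps,
\begin{equation*}
\frac{SS_{m^\star}(z,\overline\rho_n) - SS_m(z,\overline\rho_n)}{n\beta_n} = O_P\!\left(\frac{1}{\Delta_n\beta_n}\right) + O_P\!\left(\frac{1}{n^{1/2}\beta_n}\right) = o_P(1),
\end{equation*}
by the assumptions $\Delta_n\beta_n \to \infty$ and $n^{1/2}\beta_n \to \infty$, which gives $P(\widehat m = m) \to 0$ and closes the proof. The main obstacle is the over-segmentation step: one must bound uniformly, over the combinatorial family $\mathcal A_{n,m}$, the noise-projection supremum $\max_{\boldsymbol t}\|\pi_{E_{\boldsymbol t}}\epsilon\|^2$ and simultaneously rule out any improvement from non-refining segmentations (which is handled by a positive bias term in the decomposition of $J_n$). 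The minimum-segment-length constraint $\Delta_n$ is what keeps the combinatorial maximum of order $n/\Delta_n$, which is the origin of the hypothesis $\Delta_n\beta_n \to \infty$; the hypothesis $n^{1/2}\beta_n \to \infty$ is the price paid for replacing $\rho^\star$ by an $n^{-1/2}$-consistent estimator $\overline\rho_n$.
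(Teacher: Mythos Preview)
Your argument is correct and takes a somewhat different route from the paper's own proof. The paper, after observing that $\widehat m = m$ forces $J_n(\overline w,\boldsymbol t)+\beta_n\le 0$ for some $\boldsymbol t\in\mathcal A_{n,m}$, first \emph{refines} $\boldsymbol t$ by adjoining the true change-points $\boldsymbol t_n^\star$ to obtain $\boldsymbol t'\in\mathcal A_{n,m'}$ with $E_{\boldsymbol t_n^\star}\subset E_{\boldsymbol t'}$; this inclusion makes $K_n(w^\star,\boldsymbol t')=0$ and, by an orthogonality computation, also $W_n(w^\star,\boldsymbol t')=0$, so that only $V_n(w^\star,\boldsymbol t')$ and the plug-in difference $J_n(\overline w,\boldsymbol t')-J_n(w^\star,\boldsymbol t')$ remain to be bounded (by $M_n/(n\Delta_n)$ via Kolmogorov's maximal inequality and by $O_P(n^{-1/2})$ via Lemma~\ref{lem:BoundedUnifBound}, respectively). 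You avoid this refinement entirely: since $K_n\ge 0$, you bound $-J_n(w^\star,\boldsymbol t)$ directly by $-V_n(w^\star,\boldsymbol t)+|W_n(w^\star,\boldsymbol t)|$, using the uniform bound $|W_n(w^\star,\cdot)|=O_P(n^{-1/2})$ from the proof of Lemma~\ref{lem:consistency} in addition to the $V_n$ bound. This is a shade more elementary, and it sidesteps the small technicality in the paper's proof of checking that the refined $\boldsymbol t'$ still lies in some $\mathcal A_{n,m'}$ (i.e.\ that no two segments of $\boldsymbol t'$ are shorter than $\Delta_n$). Conversely, the paper's refinement trick makes the $W_n$ contribution vanish exactly rather than contributing an extra $O_P(n^{-1/2})$ term; in your approach that term is harmless because it is absorbed into the same $O_P(1/(n^{1/2}\beta_n))$ budget already paid by the plug-in step.

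Two cosmetic remarks: (i) you write ``$|V_n|$'' where you really only use (and only need) the lower bound $V_n\ge -\frac{2(m+1)}{n\Delta_n}M_n$ from \eqref{eq:LMboundV}; (ii) the reference ``(\ref{eq:Wn})'' points to the definition of $W_n$, not to the bound on $|W_n|$, which is the unlabelled third display in the proof of Lemma~\ref{lem:consistency}.
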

%
\begin{proof}[Proof of Lemma \ref{Lem:infSelBeta}]
If $\widehat{m}=m<m^{\star}$, then
\begin{equation*}
\frac1n SS_m(z, \overline{\rho}_n) + \beta_n m \leq \frac1n SS_{m^{\star}}(z, \overline{\rho}_n) + \beta_n m^{\star} \; ,
\end{equation*}
where $SS_m$ is defined in \eqref{Eq:SSm}. In particular, there exists $\boldsymbol{t}\in \mathcal{A}_{n,m}$ such that
\begin{equation*}
\frac1n \underset{\boldsymbol{\delta}}{\min} SS_m(z, \overline{\rho}_n, \boldsymbol{\delta}, \boldsymbol{t}) + \beta_n m \leq \frac1n \underset{\boldsymbol{\delta}}{\min} SS_m(z, \overline{\rho}_n, \boldsymbol{\delta}, \boldsymbol{t}_n^{\star}) + \beta_n m^{\star} \; .
\end{equation*}
From \eqref{eq:Jnm}, we get
\begin{equation*}
J_n \left(\overline{w},\boldsymbol{t}\right) \leq \beta_n\left(m^{\star} - m\right) \; .
\end{equation*}
Since $\left(\beta_n\right)$ converges to zero, for any $\varepsilon>0$, $\beta_n\left(m^{\star} - m\right) \leq \varepsilon$ for a large
enough $n$, and so
\begin{equation*}
J_n \left(\overline{w},\boldsymbol{t}\right) \leq \varepsilon \; .
\end{equation*}
One can check that there exist $0<\nu<\Delta_{\boldsymbol{\tau}^{\star}}$ such that, for a large enough $n$, there exists $\boldsymbol{t}'\in \mathcal{C}_{n,m^{\star},\nu}$ such that $E_{\boldsymbol{t}} \subset E_{\boldsymbol{t}'}$ (that is the change-points of $\boldsymbol{t}$ are change-points of $\boldsymbol{t}'$) for all $\boldsymbol{t}\in \mathcal{A}_{n,m}$, where $\mathcal{C}_{n,m^{\star},\nu}$ is defined in \eqref{eq:Cn_alpha}. From \eqref{eq:Jnm} and $E_{\boldsymbol{t}} \subset E_{\boldsymbol{t}'}$, we get $J_n \left(\overline{w},\boldsymbol{t}'\right) \leq J_n \left(\overline{w},\boldsymbol{t}\right)$. Then, the following inequality holds for all $\varepsilon>0$ and any large enough $n$:
\begin{equation}\label{eq:ineq_inf_beta}
P\left(\widehat{m}=m\right) \leq P\left(\exists \boldsymbol{t}'\in \mathcal{C}_{n,m^{\star},\nu} , J_n \left(\overline{w},\boldsymbol{t}'\right) \leq \varepsilon\right) \; .
\end{equation}
We then follow the steps of \eqref{eq:ineq_alpha}, $-\nu\underline{\lambda}^2$ being replaced by $\varepsilon-\nu\underline{\lambda}^2$. The convergence of $P\left(\exists \boldsymbol{t}'\in \mathcal{C}_{n,m^{\star},\nu} , J_n \left(\overline{w},\boldsymbol{t}'\right) \leq \varepsilon\right)$ to zero holds with $\varepsilon < \nu\underline{\lambda}^2$. We can conclude with \eqref{eq:ineq_inf_beta}.
\end{proof}
\begin{proof}[Proof of Lemma \ref{Lem:supSelBeta}]
Following the proof of Lemma \ref{Lem:infSelBeta}, if $\widehat{m}=m>m^{\star}$, there exists $\boldsymbol{t}\in\mathcal{A}_{n,m}$ such that $J_n \left(\overline{w},\boldsymbol{t}\right) \leq \beta_n\left(m^{\star} - m\right)$ and then $ J_n \left(\overline{w},\boldsymbol{t}\right) + \beta_n \leq 0 $ since $m>m^{\star}$. Then
\begin{equation}\label{eq:ineq_sup_beta}
P\left(\widehat{m}=m\right) \leq P\left(\exists \boldsymbol{t}\in \mathcal{A}_{n,m} , J_n \left(\overline{w},\boldsymbol{t}\right) + \beta_n \leq 0\right) \; .
\end{equation}
Adding the change-points of $\boldsymbol{t}_n^{\star}$ to those of such a $\boldsymbol{t}$, one can get $t'\in\mathcal{A}_{n,m'}$ with $m^{\star}<m\leq m' \leq m+m^{\star}$ such that $E_{\boldsymbol{t}}\cup E_{\boldsymbol{t}_n^{\star}}\subset E_{\boldsymbol{t}'}$, provided that $\left(m+m^{\star}\right)\left\lceil\Delta_n\right\rceil\leq n$, where $\lceil\cdot \rceil$ is the ceiling function, this condition being fulfilled for any sufficiently large $n$ under the assumptions of Proposition \ref{Prop:SelBeta} since $n^{-1}\Delta_n$ converges to zero. Since $E_{\boldsymbol{t}}\subset E_{\boldsymbol{t}'}$, we derive $J_n \left(\overline{w},\boldsymbol{t}'\right)+ \beta_n \leq J_n \left(\overline{w},\boldsymbol{t}\right) + \beta_n$ from \eqref{eq:Jnm}. Then, from \eqref{eq:ineq_sup_beta}, we get
\begin{equation}\label{eq:sublemma_sup_beta}
\forall m' > m^{\star} , P\left(\exists \boldsymbol{t}'\in \mathcal{A}_{n,m'} , E_{\boldsymbol{t}_n^{\star}}\subset E_{\boldsymbol{t}'},  J_n \left(\overline{w},\boldsymbol{t}'\right)+ \beta_n \leq 0\right) \underset{n\rightarrow\infty}{\longrightarrow} 0
\end{equation}
is a sufficient condition to prove the lemma. Let us prove \eqref{eq:sublemma_sup_beta}. Let $m'>m^{\star}$ and such a $\boldsymbol{t}'$. We compare $J_n \left(\overline{w},\boldsymbol{t}'\right)$ to $J_n \left(w^{\star},\boldsymbol{t}'\right)$. Since $\mathbb{E}w^{\star}\in E_{\boldsymbol{t}_n^{\star}}\subset E_{\boldsymbol{t}'}$, $K_n \left(w^{\star} , \boldsymbol{t}'\right) = 0$ by \eqref{eq:Kn}. By \eqref{eq:Wn} and $\mathbb{E}w^{\star}\in E_{\boldsymbol{t}_n^{\star}}\subset E_{\boldsymbol{t}'}$,
\begin{eqnarray*}
W_n\left(w^{\star},\boldsymbol{t}'\right) & = & \frac{2}{n}\left( \left\langle \pi_{E_{\boldsymbol{t}_n^{\star}}} \left( w^{\star}-\mathbb{E}w^{\star}\right), \pi_{E_{\boldsymbol{t}_n^{\star}}} \left( \mathbb{E}w^{\star} \right)\right\rangle - \left\langle \pi_{E_{\boldsymbol{t}'}} \left( w^{\star}-\mathbb{E}w^{\star}\right), \pi_{E_{\boldsymbol{t}'}} \left( \mathbb{E}w^{\star} \right)\right\rangle  \right)\\
 & = & \frac{2}{n}\left\langle \pi_{E_{\boldsymbol{t}_n^{\star}}} \left( w^{\star}-\mathbb{E}w^{\star}\right) - \pi_{E_{\boldsymbol{t}'}} \left( w^{\star}-\mathbb{E}w^{\star}\right), \pi_{E_{\boldsymbol{t}_n^{\star}}} \left( \mathbb{E}w^{\star} \right)\right\rangle \\
  & = & - \frac{2}{n}\left\langle \pi_{E_{\boldsymbol{t}_n^{\star}}^\bot} \pi_{E_{\boldsymbol{t}'}} \left( w^{\star}-\mathbb{E}w^{\star}\right), \pi_{E_{\boldsymbol{t}_n^{\star}}} \left( \mathbb{E}w^{\star} \right)\right\rangle \\
   & = & 0 \; ,
\end{eqnarray*}
where $E^\bot$ is the (Euclidian) orthogonal complement of the vector subspace $E$. Then $J_n \left(w^{\star},\boldsymbol{t}'\right) = V_n \left(w^{\star},\boldsymbol{t}'\right)$ and
\begin{equation}\label{eq:decomp_sup_beta}
J_n \left(\overline{w},\boldsymbol{t}'\right) = V_n \left(w^{\star},\boldsymbol{t}'\right) + \left(J_n \left(\overline{w},\boldsymbol{t}'\right) - J_n \left(w^{\star},\boldsymbol{t}'\right)\right) \; .
\end{equation}
Using \eqref{eq:LMboundV}, $V_n\left(w^{\star},\boldsymbol{t}\right)  \geq  -\frac{2\left(m'+1\right)}{n\Delta_n}M_n$, where
\begin{eqnarray*}
M_n & = & M_{n,1} + M_{n,2}\;,\\
 M_{n,1} & = & \max_{1\leq s\leq n} \left(\sum_{i=1}^s \epsilon_i\right)^2\;, \\
 M_{n,2} & = &  \max_{1\leq s\leq n} \left(\sum_{i=n-s}^{n} \epsilon_i\right)^2\;.
\end{eqnarray*}
 We define $D_n = \underset{\boldsymbol{t}' \in\mathcal{A}_{n,m'}}{\sup} \left| J_n \left(\overline{w},\boldsymbol{t}'\right) - J_n \left(w^{\star},\boldsymbol{t}'\right) \right|$. Then, using \eqref{eq:decomp_sup_beta},
\begin{equation*}
J_n \left(\overline{w},\boldsymbol{t}'\right) \geq -\frac{2\left(m+1\right)}{n\Delta_n}M_n - D_n \; ,
\end{equation*}
which implies
\begin{eqnarray*}
P\left(\exists \boldsymbol{t}'\in \mathcal{A}_{n,m'} , E_{\boldsymbol{t}_n^{\star}}\subset E_{\boldsymbol{t}'},  J_n \left(\overline{w},\boldsymbol{t}'\right)+ \beta_n \leq 0\right) & \leq & P\left( -\frac{2\left(m'+1\right)}{n\Delta_n}M_n - D_n + \beta_n \leq 0\right)\\
 & \leq & P\left( \frac{2\left(m'+1\right)}{n\Delta_n}M_n \geq \frac{\beta_n}{2} \right) + P\left(  D_n \geq \frac{\beta_n}{2} \right) \; .
\end{eqnarray*}
By Lemma \ref{lem:BoundedUnifBound}, $D_n = O_P \left(n^{-1/2}\right)$ and then $P\left(  D_n \geq \frac{\beta_n}{2} \right)$ tends to zero as $n$ tends to infinity since $n^{1/2}\beta_n\underset{n\rightarrow\infty}{\longrightarrow} + \infty $. Let us now prove that $P\left( \frac{2\left(m+1\right)}{n\Delta_n}M_n \geq \frac{\beta_n}{2} \right)$ tends to zero as $n$ tends to infinty, which concludes the proof. Note that
\begin{equation*}
P\left( \frac{2\left(m'+1\right)}{n\Delta_n}M_n \geq \frac{\beta_n}{2} \right) \leq P\left( M_{n,1} \geq \frac{n\Delta_n \beta_n}{8\left(m'+1\right)} \right) + P\left( M_{n,2} \geq \frac{n\Delta_n \beta_n}{8\left(m'+1\right)} \right) \; .
\end{equation*}
We prove the convergence for each term in the rhs of the above equation. We shall prove it for the first term in the rhs since the arguments for the other term are the same.  From  Kolmogorov's maximal inequality (see for example \cite[Theorem 2.5.2.]{durrett2010probability}), since $\left(\epsilon_i\right)_{i\geq 0}$ is a sequence of independent random variables with zero-mean and finite variance $\sigma^{\star 2}$,
\begin{equation}\label{eq:Hajek}
\forall \delta>0, \;  P \left( M_{n,1} \geq \delta^2 \right) \leq \frac{n\sigma^{\star 2}}{\delta^2} \; . 
\end{equation}
Letting $\delta^2 = \frac{n\Delta_n \beta_n}{8\left(m'+1\right)}$ in \eqref{eq:Hajek}, we get
\begin{equation}
P\left( M_{n,1} \geq \frac{n\Delta_n \beta_n}{8\left(m'+1\right)} \right) 
\leq \frac{8\left(m'+1\right)\sigma^{\star 2}}{\Delta_n \beta_n}\;,
\end{equation}
which goes to $0$ as $n$ tends to infinity because $\Delta_n \beta_n\underset{n\rightarrow\infty}{\longrightarrow} +\infty$. The proof of the convergence of $P\left( M_{n,2} \geq \frac{n\Delta_n \beta_n}{8\left(m'+1\right)} \right)$ follows the same lines.
\end{proof}

\subsection{Proof of Proposition \ref{Prop:SelBeta2}}\label{subsec:mbicbardet_bis}
\begin{lemma}\label{Lem:infSelBeta2}
Under the assumptions of Proposition \ref{Prop:SelBeta2}, $P\left(\widehat{m}=m\right)\underset{n\rightarrow\infty}{\longrightarrow} 0$ if $m<m^{\star}$.
\end{lemma}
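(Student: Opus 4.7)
The plan is to follow the proof of Lemma~\ref{Lem:infSelBeta} line by line, but with the decorrelated series $\overline{v} = y - \overline{\rho}_n B y$ (associated with the observed process $y$ satisfying \eqref{eq:modele_new}) instead of the decorrelated series $\overline{w}$ (associated with the auxiliary process $z$ from \eqref{eq:bkw}); then I transfer the resulting control from $\overline{v}$ back to $\overline{w}$ using the coupling of Lemmas~\ref{Lem:YZ}--\ref{Lem:Delta_order} and the uniform bound already established in the proof of Proposition~\ref{Prop:Segment2}.

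Concretely, suppose $\widehat{m} = m < m^{\star}$. Exactly as in the first display of the proof of Lemma~\ref{Lem:infSelBeta} and since $SS_m(y, \overline{\rho}_n, \boldsymbol{\delta}, \boldsymbol{t}) = \|\overline{v} - T(\boldsymbol{t})\boldsymbol{\delta}\|^2$, the minimality property of $\widehat{m}$ produces some $\boldsymbol{t} \in \mathcal{A}_{n,m}$ with
$
J_n(\overline{v}, \boldsymbol{t}) \leq \beta_n (m^{\star} - m),
$
so that, for any fixed $\varepsilon > 0$ and $n$ large enough, $J_n(\overline{v}, \boldsymbol{t}) \leq \varepsilon$. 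I would then enlarge the $m$ change-points of $\boldsymbol{t}$ to a vector $\boldsymbol{t}' \in \mathcal{C}_{n, m^{\star}, \nu}$ for a suitable $0 < \nu < \Delta_{\boldsymbol{\tau}^{\star}}$, so that $E_{\boldsymbol{t}} \subset E_{\boldsymbol{t}'}$; this is possible because $m < m^{\star}$ forces at least one true change-point to be missed by a distance of order $n$. Since projecting onto a larger subspace can only increase the squared norm, \eqref{eq:Jnm} gives $J_n(\overline{v}, \boldsymbol{t}') \leq J_n(\overline{v}, \boldsymbol{t}) \leq \varepsilon$. Next, the decomposition of $J_n(\overline{v}, \cdot)$ used in the proof of Proposition~\ref{Prop:Segment2}, combined with the uniform bound \eqref{eq:controleunif} ensuring
$
\sup_{\boldsymbol{t}' \in \mathcal{A}_{n, m^{\star}}} \bigl| J_n(\overline{v}, \boldsymbol{t}') - J_n(\overline{w}, \boldsymbol{t}') \bigr| = O_P(n^{-1/2}),
$
yields $J_n(\overline{w}, \boldsymbol{t}') \leq 2\varepsilon$ with probability tending to one. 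Picking $\varepsilon < \nu \underline{\lambda}^2 / 2$, the chain of inequalities \eqref{eq:ineq_alpha} from the proof of Lemma~\ref{lem:consistency} (with $-\nu\underline{\lambda}^2$ replaced by $2\varepsilon - \nu\underline{\lambda}^2 < 0$), together with Lemma~\ref{lem:BoundedUnifBound} and the lower bounds on $K_n(w^{\star}, \cdot)$, $V_n(w^{\star}, \cdot)$, $W_n(w^{\star}, \cdot)$, proves that
$
P\!\left( \exists\, \boldsymbol{t}' \in \mathcal{C}_{n, m^{\star}, \nu}, \ J_n(\overline{w}, \boldsymbol{t}') \leq 2\varepsilon \right) \longrightarrow 0,
$
as $n \to \infty$, which gives $P(\widehat{m} = m) \to 0$ and completes the proof.

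The only real difficulty is to separate cleanly the two layers of fluctuation: the $O_P(1)$ drift created by plugging $\overline{\rho}_n$ into $y$ (encoded by $\overline{\Delta}$ from Lemma~\ref{Lem:YZbar}), and the intrinsic stochastic fluctuation of $\overline{w}$ around $w^{\star}$. The first one must be controlled uniformly over $\boldsymbol{t}'$ and not merely pointwise, which is why \eqref{eq:controleunif}, rather than a pointwise analogue, is needed; the second is already handled by the argument of Lemma~\ref{lem:consistency}. Both ingredients being available, the present lemma becomes a book-keeping exercise on top of Lemma~\ref{Lem:infSelBeta}.
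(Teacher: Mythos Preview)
Your proposal is correct and follows essentially the same approach as the paper: both start from the enlargement step of Lemma~\ref{Lem:infSelBeta} to obtain $\boldsymbol{t}'\in\mathcal{C}_{n,m^{\star},\nu}$ with $J_n(\overline{v},\boldsymbol{t}')\le\varepsilon$, and both use the uniform control~\eqref{eq:controleunif} to pass from $J_n(\overline{v},\cdot)$ to $J_n(\overline{w},\cdot)$, after which the $K_n$ lower bound together with the $V_n+W_n$ control from the proof of Lemma~\ref{lem:consistency} finishes the argument. The only cosmetic difference is that the paper absorbs the two fluctuation layers simultaneously by splitting $K_n(\overline{w},\cdot)$ into two halves (one half paired with $V_n+W_n$ via~\eqref{eq:demi}, the other with $J_n(\overline{v})-J_n(\overline{w})$ via~\eqref{eq:HalfBound}), whereas you perform the reduction sequentially.
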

\begin{lemma}\label{Lem:supSelBeta2}
Under the assumptions of Proposition \ref{Prop:SelBeta2}, $P\left(\widehat{m}=m\right)\underset{n\rightarrow\infty}{\longrightarrow} 0$ if $m>m^{\star}$.
\end{lemma}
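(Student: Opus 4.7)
The plan is to mirror the proof of Lemma \ref{Lem:supSelBeta} but substitute the decorrelated process $\overline{v}$ from Lemma \ref{Lem:YZbar} in place of $\overline{w}$, and control the extra terms coming from $\overline{\Delta}$ via the uniform bound \eqref{eq:controleunif} already established in the proof of Proposition \ref{Prop:Segment2}.

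First I would observe that, exactly as in the beginning of the proof of Lemma \ref{Lem:supSelBeta}, the event $\{\widehat{m}=m\}$ with $m>m^{\star}$ forces the existence of $\boldsymbol{t}\in\mathcal{A}_{n,m}$ with $\frac{1}{n}\min_{\boldsymbol{\delta}}SS_m(y,\overline{\rho}_n,\boldsymbol{\delta},\boldsymbol{t})+\beta_n m\leq \frac{1}{n}SS_{m^{\star}}(y,\overline{\rho}_n)+\beta_n m^{\star}$, which (via the definition of $J_n$ applied now to $\overline{v}$) rewrites as $J_n(\overline{v},\boldsymbol{t})+\beta_n\leq 0$ since $m>m^{\star}$. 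Adjoining the change-points of $\boldsymbol{t}_n^{\star}$ to those of $\boldsymbol{t}$ yields $\boldsymbol{t}'\in\mathcal{A}_{n,m'}$ with $m^{\star}<m\leq m'\leq m+m^{\star}$ and $E_{\boldsymbol{t}}\cup E_{\boldsymbol{t}_n^{\star}}\subset E_{\boldsymbol{t}'}$; this operation is admissible for large enough $n$ under the assumption $n^{-1}\Delta_n\to 0$. Because $E_{\boldsymbol{t}}\subset E_{\boldsymbol{t}'}$, definition \eqref{eq:Jnm} gives $J_n(\overline{v},\boldsymbol{t}')\leq J_n(\overline{v},\boldsymbol{t})$, so it suffices to show that, for every $m'>m^{\star}$, $P(\exists\boldsymbol{t}'\in\mathcal{A}_{n,m'}\text{ with }E_{\boldsymbol{t}_n^{\star}}\subset E_{\boldsymbol{t}'}\text{ and }J_n(\overline{v},\boldsymbol{t}')+\beta_n\leq 0)\to 0$.

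Next I would use the decomposition from the proof of Proposition \ref{Prop:Segment2}, namely $J_n(\overline{v},\boldsymbol{t}')=J_n(\overline{w},\boldsymbol{t}')+R_n(\boldsymbol{t}')$ with $R_n(\boldsymbol{t}')=J_n(\overline{\Delta},\boldsymbol{t}')-\frac{2}{n}(\langle\pi_{E_{\boldsymbol{t}_n^{\star}}}(\overline{w}),\pi_{E_{\boldsymbol{t}_n^{\star}}}(\overline{\Delta})\rangle-\langle\pi_{E_{\boldsymbol{t}'}}(\overline{w}),\pi_{E_{\boldsymbol{t}'}}(\overline{\Delta})\rangle)$, and invoke \eqref{eq:controleunif} to get $\sup_{\boldsymbol{t}}|R_n(\boldsymbol{t})|=O_P(n^{-1/2})$. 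Now I reproduce the core argument of the proof of Lemma \ref{Lem:supSelBeta} on $\overline{w}$: since $\mathbb{E}w^{\star}\in E_{\boldsymbol{t}_n^{\star}}\subset E_{\boldsymbol{t}'}$ we have $K_n(w^{\star},\boldsymbol{t}')=0$ and $W_n(w^{\star},\boldsymbol{t}')=0$, hence $J_n(w^{\star},\boldsymbol{t}')=V_n(w^{\star},\boldsymbol{t}')\geq -\frac{2(m'+1)}{n\Delta_n}M_n$, while Lemma \ref{lem:BoundedUnifBound} gives $\sup_{\boldsymbol{t}}|J_n(\overline{w},\boldsymbol{t})-J_n(w^{\star},\boldsymbol{t})|=O_P(n^{-1/2})$. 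Combining these bounds with $J_n(\overline{v},\boldsymbol{t}')+\beta_n\leq 0$, we arrive at the inequality
\begin{equation*}
\beta_n\leq \tfrac{2(m'+1)}{n\Delta_n}M_n+D_n+|R_n(\boldsymbol{t}')|,
\end{equation*}
where $D_n=\sup_{\boldsymbol{t}'\in\mathcal{A}_{n,m'}}|J_n(\overline{w},\boldsymbol{t}')-J_n(w^{\star},\boldsymbol{t}')|=O_P(n^{-1/2})$.

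To conclude I would split the corresponding probability into three pieces: $P(D_n\geq \beta_n/4)$, $P(\sup_{\boldsymbol{t}'}|R_n(\boldsymbol{t}')|\geq \beta_n/4)$ and $P(\frac{2(m'+1)}{n\Delta_n}M_n\geq \beta_n/2)$. The first two vanish because both $D_n$ and $\sup|R_n|$ are $O_P(n^{-1/2})$ while $n^{1/2}\beta_n\to\infty$, and the third vanishes by Kolmogorov's maximal inequality together with $\Delta_n\beta_n\to\infty$, exactly as at the end of the proof of Lemma \ref{Lem:supSelBeta}. The main obstacle is really the verification that the $\overline{\Delta}$-induced remainder admits a uniform bound that dominates $\beta_n$ strictly faster than the useful signal — this is precisely what \eqref{eq:controleunif} provides, so the rest of the argument goes through with only notational changes from the $z$-case.
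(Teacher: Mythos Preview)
Your proposal is correct and follows essentially the same approach as the paper: both control $J_n(\overline{v},\boldsymbol{t})-J_n(\overline{w},\boldsymbol{t})$ uniformly via \eqref{eq:controleunif} and then invoke the argument of Lemma~\ref{Lem:supSelBeta} for the $\overline{w}$-part. The paper is slightly more concise in that it splits directly into $P(\exists\boldsymbol{t},\,J_n(\overline{w},\boldsymbol{t})+\tfrac{1}{2}\beta_n\leq 0)$ and $P(\sup_{\boldsymbol{t}}|J_n(\overline{v},\boldsymbol{t})-J_n(\overline{w},\boldsymbol{t})|\geq\tfrac{1}{2}\beta_n)$ and cites Lemma~\ref{Lem:supSelBeta} as a black box for the first term, whereas you unpack that lemma inline and split into three pieces---but the substance is identical.
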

\begin{proof}[Proof of Lemma \ref{Lem:infSelBeta2}]
Following the proof of Lemma \ref{Lem:infSelBeta} and replacing $\overline{w}$ by $\overline{v}$, we get, for any $\varepsilon >0$,
\begin{eqnarray}
P\left(\widehat{m}=m\right) & \leq & P\left(\exists \boldsymbol{t}'\in \mathcal{C}_{n,m^{\star},\nu} , J_n \left(\overline{v},\boldsymbol{t}'\right) \leq \varepsilon\right)\label{eq:ineq_inf_beta2}\\
 & \leq & P\left(\exists \boldsymbol{t}'\in \mathcal{C}_{n,m^{\star},\nu} , \frac{1}{2}K_n \left(\overline{w},\boldsymbol{t}'\right) + V_n\left(\overline{w},\boldsymbol{t}'\right) + W_n\left(\overline{w},\boldsymbol{t}'\right) \leq \frac{\varepsilon}{2}\right) \label{eq:ineq_inf_beta2_decompo}\\
 & + & P\left(\exists \boldsymbol{t}'\in \mathcal{C}_{n,m^{\star},\nu} , \frac{1}{2}K_n \left(\overline{w},\boldsymbol{t}'\right) + J_n \left(\overline{v},\boldsymbol{t}'\right) - J_n \left(\overline{w},\boldsymbol{t}'\right) \leq \frac{\varepsilon}{2}\right) \;, \nonumber
\end{eqnarray}
since
$$ J_n \left(\overline{v},\boldsymbol{t}'\right) = 
  \frac{1}{2}K_n \left(\overline{w},\boldsymbol{t}'\right) + V_n\left(\overline{w},\boldsymbol{t}'\right) + W_n\left(\overline{w},\boldsymbol{t}'\right)
  + 
  \frac{1}{2}K_n \left(\overline{w},\boldsymbol{t}'\right) + J_n \left(\overline{v},\boldsymbol{t}'\right) - J_n \left(\overline{w},\boldsymbol{t}'\right). $$
From \eqref{eq:demi} and \eqref{eq:ineq_inf_beta2_decompo}, it suffices to prove that
$$P\left(\exists \boldsymbol{t}'\in \mathcal{C}_{n,m^{\star},\nu} , \frac{1}{2}K_n \left(\overline{w},\boldsymbol{t}'\right) + J_n \left(\overline{v},\boldsymbol{t}'\right) - J_n \left(\overline{w},\boldsymbol{t}'\right) \leq \frac{\varepsilon}{2}\right)\underset{n\to\infty}{\longrightarrow}0  $$
to conclude the proof. It follows from \eqref{eq:controleunif} and \eqref{eq:HalfBound}, $\frac{1}{2}\underline{\lambda}^2 \nu$ being replaced by $\frac{1}{2}\left(\underline{\lambda}^2 \nu - \varepsilon\right)$, which is positive if $\varepsilon < \underline{\lambda}^2 \nu$.
\end{proof}
\begin{proof}[Proof of Lemma \ref{Lem:supSelBeta2}]
As in the Proof of Lemma \ref{Lem:supSelBeta}, it suffices to show that 
$$P\left(\exists \boldsymbol{t}\in\mathcal{A}_{n,m}, J_n\left(\overline{v},\boldsymbol{t}\right) + \beta_n \leq 0\right)\underset{n\to\infty}{\longrightarrow} 0 \; .$$
Since $$J_n\left(\overline{v},\boldsymbol{t}\right)\geq J_n\left(\overline{w},\boldsymbol{t}\right) - \underset{t}{\sup}\left|J_n\left(\overline{v},\boldsymbol{t}\right) - J_n\left(\overline{w},\boldsymbol{t}\right)\right| \; , $$
the result follows from
\begin{eqnarray}
P\left(\exists \boldsymbol{t}\in\mathcal{A}_{n,m}, J_n\left(\overline{w},\boldsymbol{t}\right) + \frac{1}{2}\beta_n \leq 0\right) & \underset{n\to\infty}{\longrightarrow} & 0 \label{eq:HalfBeta}\\
P\left(\underset{t}{\sup}\left|J_n\left(\overline{v},\boldsymbol{t}\right) - J_n\left(\overline{w},\boldsymbol{t}\right)\right| \geq \frac{1}{2}\beta_n\right)& \underset{n\to\infty}{\longrightarrow} & 0 \label{eq:Diffvwbeta}
\end{eqnarray}
\eqref{eq:HalfBeta} follows from the Proof of Lemma \ref{Lem:supSelBeta}, replacing $\beta_n$ by $\frac{1}{2}\beta_n$. \eqref{eq:Diffvwbeta} follows from \eqref{eq:controleunif} and from $n^{1/2}\beta_n\underset{n\to\infty}{\longrightarrow} +\infty$.
\end{proof}

\subsection{Proof of Proposition \ref{Prop:mBICBardet}} 

We first give some lemmas which are useful for the proof of Proposition \ref{Prop:mBICBardet}.

\begin{lemma} \label{Lem:SSm} Under the assumptions of 
Proposition \ref{Prop:mBICBardet} with $SS_m$ given by \eqref{Eq:SSm}, we have, for any positive $m$,
$$
SS_m(z, \overline{\rho}_n) = SS_m(z, \rho^{\star}) + O_P(1),\textrm{ as } n\to\infty\;.
$$
\end{lemma}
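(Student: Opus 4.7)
The plan is to use the projection representation $SS_m(z,\rho,\boldsymbol{t})=\|P_{\boldsymbol{t}}(z-\rho Bz)\|^2$ with $P_{\boldsymbol{t}}:=I-\pi_{E_{\boldsymbol{t}}}$, together with the identity $\overline{w}=w^{\star}+(\rho^{\star}-\overline{\rho}_n)Bz$. Expanding the square, for any fixed $\boldsymbol{t}$,
\[
\Delta(\boldsymbol{t}):=SS_m(z,\overline{\rho}_n,\boldsymbol{t})-SS_m(z,\rho^{\star},\boldsymbol{t})=2(\rho^{\star}-\overline{\rho}_n)\langle P_{\boldsymbol{t}} w^{\star},P_{\boldsymbol{t}} Bz\rangle+(\rho^{\star}-\overline{\rho}_n)^2\|P_{\boldsymbol{t}} Bz\|^2.
\]
Setting $\boldsymbol{t}_{\star}:=\boldsymbol{\widehat{t}}(z,\rho^{\star})$ and $\boldsymbol{t}_{\circ}:=\boldsymbol{\widehat{t}}(z,\overline{\rho}_n)$, optimality of each minimiser yields the two-sided bracketing $\Delta(\boldsymbol{t}_{\circ})\leq SS_m(z,\overline{\rho}_n)-SS_m(z,\rho^{\star})\leq\Delta(\boldsymbol{t}_{\star})$, and it suffices to show $\Delta(\boldsymbol{t})=O_P(1)$ for $\boldsymbol{t}\in\{\boldsymbol{t}_{\star},\boldsymbol{t}_{\circ}\}$. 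Lemma \ref{lem:rateT} (applied once with $\overline{\rho}_n=\rho^{\star}$ and once with the given $\overline{\rho}_n$) gives $\|\boldsymbol{t}-\boldsymbol{t}_n^{\star}\|_\infty=O_P(1)$ for both.

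I would then bound the two summands of $\Delta(\boldsymbol{t})$ for such ``close'' $\boldsymbol{t}$. The quadratic part is immediate: $(\rho^{\star}-\overline{\rho}_n)^2\|P_{\boldsymbol{t}}Bz\|^2\leq O_P(n^{-1})\|Bz\|^2=O_P(1)$ by \eqref{eq:hypRhoRate} and Lemma \ref{lem:rateXY}. For the linear part, the relation $w^{\star}_i=\delta_k^{\star}+\epsilon_i$ on segment $k$ (valid under \eqref{eq:bkw}) gives $\PE w^{\star}\in E_{\boldsymbol{t}_n^{\star}}$ and $w^{\star}-\PE w^{\star}=\epsilon$. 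A segment-by-segment computation bounds $\|P_{\boldsymbol{t}}\PE w^{\star}\|^2$ by $m\overline{\lambda}^2\|\boldsymbol{t}-\boldsymbol{t}_n^{\star}\|_\infty=O_P(1)$, so Cauchy--Schwarz with $\|P_{\boldsymbol{t}}Bz\|\leq\|Bz\|=O_P(n^{1/2})$ yields $\langle P_{\boldsymbol{t}}\PE w^{\star},P_{\boldsymbol{t}}Bz\rangle=O_P(n^{1/2})$. The remaining piece $\langle\epsilon,P_{\boldsymbol{t}}Bz\rangle=\langle\epsilon,Bz\rangle-\langle\epsilon,\pi_{E_{\boldsymbol{t}}}Bz\rangle$ decomposes further: $\sum_i\epsilon_iz_{i-1}$ is a martingale-difference sum (since $\epsilon_i$ is independent of $z_{i-1}$) of variance $O(n)$, hence $O_P(n^{1/2})$; and $\langle\epsilon,\pi_{E_{\boldsymbol{t}}}Bz\rangle=\sum_{k=0}^m\bar{z}_{-}^{(k)}(\boldsymbol{t})\sum_{i\in I_k(\boldsymbol{t})}\epsilon_i$, where $I_k(\boldsymbol{t})=(t_k,t_{k+1}]$, each segment mean $\bar{z}_{-}^{(k)}(\boldsymbol{t}):=|I_k(\boldsymbol{t})|^{-1}\sum_{i\in I_k(\boldsymbol{t})}z_{i-1}$ is $O_P(1)$ (Cauchy--Schwarz plus $|I_k(\boldsymbol{t})|=\Theta(n)$), and each $\sum_{i\in I_k(\boldsymbol{t})}\epsilon_i$ is $O_P(n^{1/2})$. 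Multiplying by $2(\rho^{\star}-\overline{\rho}_n)=O_P(n^{-1/2})$ turns the linear piece into $O_P(1)$.

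The main obstacle, I expect, is upgrading these pointwise-in-$\boldsymbol{t}$ bounds into bounds at the random minimisers $\boldsymbol{t}_{\star}$ and $\boldsymbol{t}_{\circ}$. For this I plan a standard tightness argument: given $\varepsilon>0$, Lemma \ref{lem:rateT} supplies $M_0>0$ such that $\boldsymbol{t}_{\star},\boldsymbol{t}_{\circ}\in B_{M_0}:=\{\boldsymbol{t}\in\mathcal{A}_{n,m}:\|\boldsymbol{t}-\boldsymbol{t}_n^{\star}\|_\infty\leq M_0\}$ with probability at least $1-\varepsilon$ for all large $n$. Since $|B_{M_0}|\leq(2M_0+1)^m$ is finite and independent of $n$, each of the pointwise $O_P$ statements above persists with the same order when $\boldsymbol{t}$ is replaced by the maximum over $B_{M_0}$. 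Combining the uniform linear and quadratic bounds delivers $\Delta(\boldsymbol{t}_{\star}),\Delta(\boldsymbol{t}_{\circ})=O_P(1)$, which together with the bracketing above yields the lemma.
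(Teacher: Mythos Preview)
Your proof is correct and follows essentially the same route as the paper's. The paper proves Lemma~\ref{Lem:SSm} by invoking the argument of Lemma~\ref{Lem:SSmY} (with the $\Delta^{\star}$ term dropped): the same two-sided bracketing at the two minimisers, the same expansion of the difference into a quadratic term $(\rho^{\star}-\overline{\rho}_n)^2\|Bz\|^2$ plus a linear cross term, and the same use of the rates $(\rho^{\star}-\overline{\rho}_n)=O_P(n^{-1/2})$, $\|Bz\|=O_P(n^{1/2})$, $\langle Bz,\epsilon\rangle=O_P(n^{1/2})$, together with the change-point convergence rate from Proposition~\ref{Prop:Segment}.

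The only cosmetic difference is in bookkeeping: the paper writes the cross term as $\langle Bz,\,z-\rho^{\star}Bz-\overline{T}\,\overline{\delta}\rangle$ and decomposes it via $\epsilon+T^{\star}(\delta^{\star}-\overline{\delta})+(T^{\star}-\overline{T})\overline{\delta}$, thereby also using the $\boldsymbol{\delta}$-rate from Proposition~\ref{Prop:Segment}; you instead write the minimised-over-$\delta$ criterion as $\|P_{\boldsymbol{t}}w^{\star}\|^2$, split $w^{\star}=\PE w^{\star}+\epsilon$, and use only the $\boldsymbol{t}$-rate (Lemma~\ref{lem:rateT}) plus the finite-cardinality tightness argument over $B_{M_0}$. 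This is a mild streamlining rather than a different idea.
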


\begin{lemma} \label{Lem:SSmstar} Under the assumptions of Proposition
\ref{Prop:mBICBardet} with $SS_m$ given by \eqref{Eq:SSm}, we have,
for any positive $m$,
$$
SS_m(z, \rho^{\star})^{-1} = O_P(n^{-1}),\textrm{ as } n\to\infty\;.
$$
\end{lemma}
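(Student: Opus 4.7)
The plan is to show $SS_m(z,\rho^{\star}) \geq c\,n$ with probability tending to one for some constant $c>0$, from which $SS_m(z,\rho^{\star})^{-1}=O_P(n^{-1})$ follows immediately. Recall the decorrelated series $w^{\star}_i = z_i - \rho^{\star} z_{i-1}$ introduced in \eqref{eq:decor}. A direct computation under \eqref{eq:bkw} gives $w^{\star}_i = \delta_k^{\star} + \epsilon_i$ for $t_{n,k}^{\star} < i \leq t_{n,k+1}^{\star}$, so $w^{\star}_{i+1}-w^{\star}_i = \epsilon_{i+1}-\epsilon_i$ whenever $i$ is not a true change-point, and differs from it only by the fixed jump $\delta_k^{\star}-\delta_{k-1}^{\star}$ otherwise. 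By \eqref{Eq:SSm}, $SS_m(z,\rho^{\star}) = \min_{(\boldsymbol{t},\boldsymbol{\delta}) \in \mathcal{A}_{n,m} \times \rset^{m+1}} \sum_{k=0}^m \sum_{i=t_k+1}^{t_{k+1}}(w^{\star}_i-\delta_k)^2$.

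My first step will be a deterministic comparison between $SS_m$ and the sum of squared consecutive differences of $w^{\star}$. For any feasible pair $(\boldsymbol{t},\boldsymbol{\delta})$ and any two consecutive indices $i,i+1$ lying in the same estimated segment $I_k$, the elementary bound $(a-b)^2 \leq 2a^2+2b^2$ applied to $a = w^{\star}_{i+1}-\delta_k$, $b = w^{\star}_i-\delta_k$ gives $(w^{\star}_{i+1}-w^{\star}_i)^2 \leq 2(w^{\star}_i-\delta_k)^2 + 2(w^{\star}_{i+1}-\delta_k)^2$. Summing such inequalities inside each segment and then over the $m+1$ segments, and observing that exactly $m$ pairs $(i,i+1)$ straddle a boundary of the partition, yields
$$\sum_{i=1}^{n-1}(w^{\star}_{i+1}-w^{\star}_i)^2 \leq 4\,SS_m(z,\rho^{\star},\boldsymbol{\delta},\boldsymbol{t}) + m \max_{1\leq i\leq n-1}(w^{\star}_{i+1}-w^{\star}_i)^2.$$
Since this inequality is valid for every $(\boldsymbol{t},\boldsymbol{\delta})$, minimizing the right-hand side delivers
$$4\,SS_m(z,\rho^{\star}) \geq \sum_{i=1}^{n-1}(w^{\star}_{i+1}-w^{\star}_i)^2 - m\max_{1\leq i\leq n-1}(w^{\star}_{i+1}-w^{\star}_i)^2.$$

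The second step is to control the two extremal quantities of the noise. Splitting the sum according to whether $i$ is a true change-point, the at most $m^{\star}$ boundary indices each contribute $O_P(1)$ (since the $\delta_k^{\star}$ are fixed constants and $\PE[\epsilon_0^2]<\infty$), while the remaining summands equal $(\epsilon_{i+1}-\epsilon_i)^2$. Splitting the latter into the two i.i.d.\ sub-sequences indexed by odd and even $i$, the weak law of large numbers (which only requires $\PE[\epsilon_0^2]<\infty$) gives $n^{-1}\sum_{i=1}^{n-1}(w^{\star}_{i+1}-w^{\star}_i)^2 \to 2\sigma^{\star 2}$ in probability. For the maximum, the tail estimate
$$n\,\PP(|\epsilon_0|>t\sqrt{n}) \leq t^{-2}\,\PE\!\left[\epsilon_0^2\, \1_{|\epsilon_0|>t\sqrt{n}}\right] \underset{n\to\infty}{\longrightarrow} 0$$
(by dominated convergence) gives $\max_{1\leq i\leq n}\epsilon_i^2 = o_P(n)$, so $m\max_i(w^{\star}_{i+1}-w^{\star}_i)^2 = o_P(n)$ since $m$ is fixed.

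Combining these two estimates, the right-hand side of the deterministic lower bound is at least $\sigma^{\star 2}\, n/2$ with probability tending to one, whence $SS_m(z,\rho^{\star}) \geq \sigma^{\star 2}\, n/8$ with probability tending to one, which is exactly the desired $SS_m(z,\rho^{\star})^{-1}=O_P(n^{-1})$. The argument is essentially self-contained; the main, rather mild, obstacle is to arrange both noise controls under the sole assumption of a finite second moment for $\epsilon_0$, so that the lemma also covers the non-Gaussian setting alluded to in the preceding propositions.
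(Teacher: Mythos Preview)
Your argument is correct and takes a genuinely different route from the paper's own proof. The paper compares $SS_m(z,\rho^{\star})$ with $SS_m(z,\rho^{\star},\boldsymbol{t}^{\star})$ (the residual sum of squares at the true change-points), invokes a by-product of Theorem~3 in \cite{LM} to show that the minimum is attained at $\boldsymbol{t}^{\star}$ with probability tending to one, and then uses that $\sigma^{\star-2}SS_m(z,\rho^{\star},\boldsymbol{t}^{\star})$ is $\chi^2_{n-m-1}$-distributed, so that $n^{-1}SS_m(z,\rho^{\star},\boldsymbol{t}^{\star})\to\sigma^{\star 2}$ in probability. Your approach instead produces a deterministic lower bound $4\,SS_m(z,\rho^{\star})\geq \sum_{i}(w^{\star}_{i+1}-w^{\star}_i)^2 - m\max_i(w^{\star}_{i+1}-w^{\star}_i)^2$, and then controls the two terms by the weak law of large numbers and the elementary tail estimate $nP(\epsilon_0^2>t^2 n)\leq t^{-2}\PE[\epsilon_0^2\1_{\epsilon_0^2>t^2 n}]\to 0$.

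Two advantages of your route are worth noting. First, it requires only $\PE[\epsilon_0^2]<\infty$ and never appeals to the $\chi^2$ law, so it covers the non-Gaussian setting directly, in line with the extensions stated in Propositions~\ref{Prop:Segment}--\ref{Prop:SelBeta2}. Second, it is agnostic to the value of $m$: the paper's argument refers to $\boldsymbol{t}^{\star}\in\mathcal{A}_{n,m}$, which is literally only available when $m=m^{\star}$ (the convention adopted in Section~\ref{sec:change-points}), whereas your bound holds for any fixed $m\geq 1$ as the lemma states. The trade-off is that the paper's proof identifies the exact limit $n^{-1}SS_m(z,\rho^{\star})\to\sigma^{\star 2}$, while your argument only yields the cruder (but sufficient) lower bound $SS_m(z,\rho^{\star})\geq \sigma^{\star 2}n/8$ with probability tending to one.
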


\begin{proof}[Proof of Lemma \ref{Lem:SSm}]
 The proof of this Lemma follows exactly this of Lemma \ref{Lem:SSmY}. The difference is that, in \eqref{eq:bkw}, the term $\Delta^{\star}$ appearing in the decomposition \eqref{eq:dec:second:terme} vanishes.
\end{proof}

\begin{proof}[Proof of Lemma \ref{Lem:SSmstar}]
We first define 
$$
SS_m \left(z, \rho, \boldsymbol{t}\right) = \underset{\delta}{\argmin } SS_m \left(z, \rho, \delta, \boldsymbol{t} \right).
$$ 
We have, for any positive $M$, 
\begin{eqnarray*}
 P\left(\frac{n}{SS_m(z, \rho^{\star})} > M \right) 
 & \leq & P\left( \left\{\frac{SS_m(z, \rho^{\star})}{SS_m(z, \rho^{\star}, \boldsymbol{t^{\star}})} > 1 \right\} \bigcap \left\{\frac{n}{SS_m(z, \rho^{\star})} > M\right\} \right) \\
  & & + P\left( \left\{\frac{SS_m(z, \rho^{\star})}{SS_m(z, \rho^{\star}, \boldsymbol{t^{\star}})} < 1 \right\} \bigcap \left\{\frac{n}{SS_m(z, \rho^{\star})} > M\right\} \right) \\
  & \leq & P\left(\frac{n}{SS_m(z, \rho^{\star}, \boldsymbol{t^{\star}})} > M \right) + P\left(\frac{SS_m(z, \rho^{\star})}{SS_m(z, \rho^{\star}, \boldsymbol{t^{\star}})} < 1 \right).
\end{eqnarray*}
Under the assumptions of Proposition \ref{Prop:Segment}, a by product of the proof of Theorem 3 in \cite{LM} is that
$$
P\left(\frac{SS_m(z, \rho^{\star})}{SS_m(z, \rho^{\star}, \boldsymbol{t^{\star}})} < 1 \right) = P\left(SS_m(z, \rho^{\star}) - SS_m(z, \rho^{\star}, \boldsymbol{t^{\star}}) < 0 \right) 
\leq \kappa n^{-\alpha},
$$
where $\kappa$ is a positive constant depending on $\boldsymbol{\delta^{\star}}$ and $\boldsymbol{t^{\star}}$, and $\alpha$ is a positive constant. 
Furthermore, as $\sigma^{\star -2}SS_m(z, \rho^{\star}, \boldsymbol{t^{\star}})$ has a $\chi^2_{n-m-1}$ distribution, 
$n^{-1}SS_m(z, \rho^{\star}, \boldsymbol{t^{\star}})=\sigma^{\star 2}+o_P(1)$ and thus {$n^{-1} SS_m(z, \rho^{\star}, \boldsymbol{t^{\star}})=O_P(1)$}, which concludes the proof.
\end{proof}

\begin{proof}[Proof of Proposition \ref{Prop:mBICBardet}]
We have to prove that, for a given positive $m$, $C_m(z, \rho^{\star}) - C_m(z, \overline{\rho}_n) = O_P(1)$. 
Observe that, since $\widehat{\tau}_k(z, \rho) = \widehat{t}_k(z, \rho) /n$, 
\begin{eqnarray} \label{Eq:DecompNkZ}
& & \sum_{k=0}^m \log n_k(\widehat{t}(z, \overline{\rho}_n)) - \sum_{k=0}^m \log
n_k(\widehat{t}(z, {\rho}^{\star})) \nonumber \\
& = & \sum_{k=0}^m \log (\widehat{\tau}_{k+1}(z, \overline{\rho}_n)-\widehat{\tau}_{k}(z, \overline{\rho}_n))- 
\sum_{k=0}^m \log (\widehat{\tau}_{k+1}(z, {\rho}^{\star})-\widehat{\tau}_{k}(z, {\rho}^{\star})).
\end{eqnarray}
By Proposition  \ref{Prop:Segment}, both quantities of the previous equation converge in probability
to $$\sum_{k=0}^m \log (\tau^{\star}_{k+1}-\tau^{\star}_{k}) \,$$
thus
\begin{equation} \label{Eq:CvgceNkZ}
\sum_{k=0}^m \log n_k(\widehat{t}(z, \overline{\rho}_n))- \sum_{k=0}^m \log
n_k(\widehat{t}(z, {\rho}^{\star}))=O_P(1). 
\end{equation}
Further note that
$$
\log {SS}_m(z, \overline{\rho}_n) - \log {SS}_m(z, {\rho}^{\star})
=\log\left(\frac{{SS}_m(z, \overline{\rho}_n)}{{SS}_m(z, {\rho}^{\star})}\right) 
= R\left(\frac{{SS}_m(z, \overline{\rho}_n)-{SS}_m(z, {\rho}^{\star})}{{SS}_m(z, {\rho}^{\star})}\right),
$$
where $R(x)=\log(1+x)$.
Lemma \ref{Lem:SSm} states that ${SS}_m(z, \overline{\rho}_n)-{SS}_m(z, {\rho}^{\star}) = O_P(1)$ and Lemma \ref{Lem:SSmstar} that $[{SS}_m(z, {\rho}^{\star})]^{-1} = O_P(n^{-1})$ so, by \cite[Lemma 2.12]{van}, we get that
$$
\log {SS}_m(z, \overline{\rho}_n) - \log {SS}_m(z, {\rho}^{\star})
= O_P(n^{-1}).
$$
Hence
$$
\frac{n-m+1}{2} \log {SS}_m(z, \overline{\rho}_n) - \frac{n-m+1}{2} \log {SS}_m(z, {\rho}^{\star}) =
O_P(1),
$$
which with (\ref{Eq:CvgceNkZ}) concludes the proof of Proposition \ref{Prop:mBICBardet}.
\end{proof}

\subsection{Proof of Proposition \ref{Prop:mBIC}}
We first give some lemmas which are useful for the proof of Proposition \ref{Prop:mBIC}.

\begin{lemma} \label{Lem:SSmY} Under the assumptions of 
Proposition \ref{Prop:mBICBardet} with $SS_m$ given by \eqref{Eq:SSm}, we have, for any positive $m$,
$$
SS_m(y, \overline{\rho}_n) = SS_m(y, \rho^{\star}) + O_P(1),\textrm{ as } n\to\infty\;.
$$
\end{lemma}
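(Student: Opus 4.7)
The key observation is that, for any fixed $\boldsymbol{t}\in\mathcal{A}_{n,m}$, the map $\rho\mapsto SS_m(y,\rho,\boldsymbol{t}):=\min_{\boldsymbol{\delta}}SS_m(y,\rho,\boldsymbol{\delta},\boldsymbol{t})=\|\pi_{E_{\boldsymbol{t}}^{\bot}}(y-\rho By)\|^2$ is quadratic in $\rho$, with minimum attained at $\rho^{*}_{\boldsymbol{t}}:=\langle \pi_{E_{\boldsymbol{t}}^{\bot}}(y),\pi_{E_{\boldsymbol{t}}^{\bot}}(By)\rangle/\|\pi_{E_{\boldsymbol{t}}^{\bot}}(By)\|^2$. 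Elementary algebra then gives
\begin{equation*}
SS_m(y,\overline{\rho}_n,\boldsymbol{t})-SS_m(y,\rho^{\star},\boldsymbol{t})=\|\pi_{E_{\boldsymbol{t}}^{\bot}}(By)\|^2\,(\overline{\rho}_n-\rho^{\star})\,(\overline{\rho}_n+\rho^{\star}-2\rho^{*}_{\boldsymbol{t}}).
\end{equation*}
Let $\hat{\boldsymbol{t}}^{\overline{\rho}}$ and $\hat{\boldsymbol{t}}^{\rho^{\star}}$ denote the minimisers in \eqref{Eq:SSm} associated with $\rho=\overline{\rho}_n$ and $\rho=\rho^{\star}$, respectively. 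By the definition of a minimum, one obtains the bracketing
\begin{equation*}
SS_m(y,\overline{\rho}_n,\hat{\boldsymbol{t}}^{\overline{\rho}})-SS_m(y,\rho^{\star},\hat{\boldsymbol{t}}^{\overline{\rho}})\leq SS_m(y,\overline{\rho}_n)-SS_m(y,\rho^{\star})\leq SS_m(y,\overline{\rho}_n,\hat{\boldsymbol{t}}^{\rho^{\star}})-SS_m(y,\rho^{\star},\hat{\boldsymbol{t}}^{\rho^{\star}}),
\end{equation*}
and it suffices to bound both endpoints by $O_P(1)$.

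The three factors appearing in the right-hand side above are handled as follows. First, $\|\pi_{E_{\boldsymbol{t}}^{\bot}}(By)\|^2\leq \|By\|^2=O_P(n)$ uniformly in $\boldsymbol{t}$ by Lemma~\ref{lem:rateXY}. Second, $|\overline{\rho}_n-\rho^{\star}|=O_P(n^{-1/2})$ by assumption~\eqref{eq:hypRhoRate}. Third, decomposing $\overline{\rho}_n+\rho^{\star}-2\rho^{*}_{\boldsymbol{t}}=(\overline{\rho}_n-\rho^{\star})+2(\rho^{\star}-\rho^{*}_{\boldsymbol{t}})$, the desired $O_P(1)$ bound reduces to showing $\rho^{\star}-\rho^{*}_{\boldsymbol{t}}=O_P(n^{-1/2})$ at $\boldsymbol{t}\in\{\hat{\boldsymbol{t}}^{\overline{\rho}},\hat{\boldsymbol{t}}^{\rho^{\star}}\}$. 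A direct manipulation yields
\begin{equation*}
\rho^{\star}-\rho^{*}_{\boldsymbol{t}}=-\frac{\langle \pi_{E_{\boldsymbol{t}}^{\bot}}(v^{\star}),\pi_{E_{\boldsymbol{t}}^{\bot}}(By)\rangle}{\|\pi_{E_{\boldsymbol{t}}^{\bot}}(By)\|^2},\qquad v^{\star}:=y-\rho^{\star}By,
\end{equation*}
and for $\boldsymbol{t}$ close to $\boldsymbol{t}_n^{\star}$ the denominator is of order $n$ in probability, because the piecewise-constant part of $By$ lies essentially in $E_{\boldsymbol{t}_n^{\star}}$ while the $B\eta$ component contributes $\Theta_P(n)$ to $\|\pi_{E_{\boldsymbol{t}}^{\bot}}(By)\|^2$. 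It therefore suffices to prove that the numerator is $O_P(n^{1/2})$.

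This numerator bound is the main technical obstacle. By Proposition~\ref{Prop:Segment2} applied to both $\rho=\rho^{\star}$ and $\rho=\overline{\rho}_n$, $\|\hat{\boldsymbol{t}}-\boldsymbol{t}_n^{\star}\|_{\infty}=O_P(1)$ in both cases; hence $\pi_{E_{\hat{\boldsymbol{t}}}^{\bot}}$ differs from $\pi_{E_{\boldsymbol{t}_n^{\star}}^{\bot}}$ only by a finite-rank, bounded-norm perturbation, and it is enough to control the numerator at $\boldsymbol{t}=\boldsymbol{t}_n^{\star}$. There I would decompose $v^{\star}=\mathbb{E}(v^{\star})+\epsilon$ and $By=\mathbb{E}(By)+B\eta$, noting that $\mathbb{E}(v^{\star})$ is equal to the piecewise-constant vector $(1-\rho^{\star})T(\boldsymbol{t}_n^{\star})\boldsymbol{\mu}^{\star}$ except at the $m$ entries corresponding to $\Delta^{\star}$ from~\eqref{eq:delta_star}, and similarly $\mathbb{E}(By)$ is piecewise constant on a shifted partition differing from $E_{\boldsymbol{t}_n^{\star}}$ by $m$ entries. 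Since $\pi_{E_{\boldsymbol{t}_n^{\star}}^{\bot}}$ annihilates $T(\boldsymbol{t}_n^{\star})\boldsymbol{\mu}^{\star}$, the inner product reduces---up to $O(1)$ boundary remainders---to $\langle\pi_{E_{\boldsymbol{t}_n^{\star}}^{\bot}}(\epsilon),\pi_{E_{\boldsymbol{t}_n^{\star}}^{\bot}}(B\eta)\rangle$; expanding the projections as sums of segment means, this is a martingale-difference sum of variance $O(n)$ (since $\epsilon_i$ is independent of $\eta_{i-1}$ and of each segment mean of $B\eta$), hence $O_P(n^{1/2})$ by the central limit theorem. The companion Lemma~\ref{Lem:SSm} for $z$ satisfying~\eqref{eq:bkw} then follows from the same argument with $\Delta^{\star}\equiv 0$, as noted in the remark.
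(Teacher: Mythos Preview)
Your argument is essentially correct and shares the same skeleton as the paper's proof: both bracket $SS_m(y,\overline{\rho}_n)-SS_m(y,\rho^{\star})$ by evaluating at the two minimisers, expand the quadratic in $\rho$, and reduce to showing that $(\overline{\rho}_n-\rho^{\star})\langle By,\text{residual}\rangle=O_P(1)$ via the martingale bound $\langle By,\epsilon\rangle=O_P(\sqrt{n})$ together with Proposition~\ref{Prop:Segment2}. The organisations differ, however. The paper freezes \emph{both} $T$ and $\delta$ at the optimiser and decomposes the residual as $\epsilon-\Delta^{\star}+T^{\star}(\delta^{\star}-\overline{\delta})+(T^{\star}-\overline{T})\overline{\delta}$, then bounds each $\langle By,\cdot\rangle$ directly by Cauchy--Schwarz and the $O_P(n^{-1/2})$ and $O_P(n^{-1})$ rates for $\overline{\delta}$ and $\overline{\boldsymbol{t}}$. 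You instead project out $\delta$, introduce $\rho^{*}_{\boldsymbol{t}}$, and pass from $\hat{\boldsymbol{t}}$ to $\boldsymbol{t}_n^{\star}$ by a perturbation argument. The paper's route is slightly shorter because it never needs a lower bound on $\|\pi_{E_{\boldsymbol{t}}^{\bot}}(By)\|^2$; indeed your detour through $\rho^{*}_{\boldsymbol{t}}$ can be bypassed by writing the difference directly as $(\overline{\rho}_n-\rho^{\star})^2\|\pi_{E_{\boldsymbol{t}}^{\bot}}(By)\|^2-2(\overline{\rho}_n-\rho^{\star})\langle\pi_{E_{\boldsymbol{t}}^{\bot}}(v^{\star}),By\rangle$, so that only the numerator bound is needed.

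One point in your sketch should be tightened. Saying that $\pi_{E_{\hat{\boldsymbol{t}}}^{\bot}}-\pi_{E_{\boldsymbol{t}_n^{\star}}^{\bot}}$ is a ``finite-rank, bounded-norm perturbation'' is not enough: rank $O(m)$ and operator norm $\leq 1$ yield only $|\langle(\pi_{E_{\hat{\boldsymbol{t}}}^{\bot}}-\pi_{E_{\boldsymbol{t}_n^{\star}}^{\bot}})v^{\star},By\rangle|\leq \|v^{\star}\|\,\|By\|=O_P(n)$. What actually makes the reduction work is that, because $\|\hat{\boldsymbol{t}}-\boldsymbol{t}_n^{\star}\|_\infty=O_P(1)$, the vector $(\pi_{E_{\hat{\boldsymbol{t}}}}-\pi_{E_{\boldsymbol{t}_n^{\star}}})(By)$ has $O_P(1)$ coordinates of size $O_P(1)$ (near the shifted breakpoints) and $O(n)$ coordinates of size $O_P(n^{-1})$ (the difference of segment means over nearly identical segments), so its Euclidean norm is $O_P(1)$; Cauchy--Schwarz with $\|v^{\star}\|=O_P(\sqrt{n})$ then gives the required $O_P(\sqrt{n})$. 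With this clarification the rest of your argument---the decomposition $v^{\star}=\mathbb{E}v^{\star}+\epsilon$, $By=\mathbb{E}(By)+B\eta$, the annihilation of piecewise-constant parts up to $O(1)$ boundary vectors, and the martingale control of $\langle\epsilon,B\eta\rangle$---is correct and yields the stated lemma.
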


\begin{lemma} \label{Lem:SmYZ}
  If $(y_0, \dots y_n)$ is defined by \eqref{eq:modele_new} and $(z_0, \dots z_n)$ is defined as in Lemma \ref{Lem:YZ}, then 
  $$
  SS_m(y, {\rho}^{\star}) = SS_m(z, {\rho}^{\star}) + O_P(1),\textrm{ as } n\to\infty\;.
  $$
\end{lemma}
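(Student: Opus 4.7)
The plan is to exploit the relation derived in Lemma~\ref{Lem:YZ}: the two decorrelated series $v^{\star}_i = y_i - \rho^{\star} y_{i-1}$ and $w^{\star}_i = z_i - \rho^{\star} z_{i-1}$ satisfy $w^{\star} = v^{\star} + \Delta^{\star}$, where $\Delta^{\star}$ has support on only the $m$ indices $t_{n,k}^{\star}+1$ ($1 \leq k \leq m$) with $\Delta^{\star}_{t_{n,k}^{\star}+1} = -\rho^{\star}(\mu_k^{\star}-\mu_{k-1}^{\star})$, hence $\|\Delta^{\star}\|^2 = O(1)$. Minimizing over $\boldsymbol{\delta}$ in the definition of $SS_m$ yields the projection expressions $SS_m(y,\rho^{\star},\boldsymbol{t}) = \|(I-\pi_{E_{\boldsymbol{t}}})v^{\star}\|^2$ and $SS_m(z,\rho^{\star},\boldsymbol{t}) = \|(I-\pi_{E_{\boldsymbol{t}}})w^{\star}\|^2$, so expanding gives
\begin{equation*}
SS_m(z,\rho^{\star},\boldsymbol{t}) - SS_m(y,\rho^{\star},\boldsymbol{t}) = \|(I-\pi_{E_{\boldsymbol{t}}})\Delta^{\star}\|^2 + 2\bigl\langle (I-\pi_{E_{\boldsymbol{t}}})v^{\star},\,\Delta^{\star}\bigr\rangle.
\end{equation*}

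The squared-norm term is bounded by $\|\Delta^{\star}\|^2 = O(1)$. For the cross term, the sparsity of $\Delta^{\star}$ reduces the inner product to a finite sum of $m$ contributions,
\begin{equation*}
\bigl\langle (I-\pi_{E_{\boldsymbol{t}}})v^{\star},\,\Delta^{\star}\bigr\rangle = \sum_{k=1}^m \Delta^{\star}_{t_{n,k}^{\star}+1}\bigl(v^{\star}_{t_{n,k}^{\star}+1} - \bar v^{\boldsymbol{t}}_{j(k,\boldsymbol{t})}\bigr),
\end{equation*}
where $\bar v^{\boldsymbol{t}}_j$ is the sample mean of $v^{\star}$ over the $j$-th segment of $\boldsymbol{t}$ and $j(k,\boldsymbol{t})$ is the segment containing $t_{n,k}^{\star}+1$. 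Each $v^{\star}_{t_{n,k}^{\star}+1} = \mu_k^{\star}-\rho^{\star}\mu_{k-1}^{\star}+\epsilon_{t_{n,k}^{\star}+1}$ is $O_P(1)$. The segment averages decompose into a deterministic piece that is a convex combination of the bounded quantities $\mu_j^{\star}-\rho^{\star}\mu_{j-1}^{\star}$ (with $O(1)$ perturbations at straddled change-points) and a stochastic piece $\frac{1}{n_j}\sum \epsilon_i$; a Kolmogorov-type maximal inequality applied over all intervals of length at least $\Delta_n$, together with a union bound over the $O(n^2)$ such intervals, yields $\sup_{\boldsymbol{t}\in\mathcal{A}_{n,m}}\max_j |\bar v^{\boldsymbol{t}}_j| = O_P(1)$ since $\Delta_n \to \infty$ polynomially. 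Consequently,
\begin{equation*}
\sup_{\boldsymbol{t}\in\mathcal{A}_{n,m}}\bigl|SS_m(z,\rho^{\star},\boldsymbol{t}) - SS_m(y,\rho^{\star},\boldsymbol{t})\bigr| = O_P(1).
\end{equation*}

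Finally, a sandwich argument handles the minimum over $\boldsymbol{t}$. Let $\widehat{\boldsymbol{t}}^y$ and $\widehat{\boldsymbol{t}}^z$ denote the respective minimizers; then
\begin{equation*}
SS_m(y,\rho^{\star}) - SS_m(z,\rho^{\star}) \leq SS_m(y,\rho^{\star},\widehat{\boldsymbol{t}}^z) - SS_m(z,\rho^{\star},\widehat{\boldsymbol{t}}^z) = O_P(1),
\end{equation*}
and symmetrically by interchanging the roles of $y$ and $z$, which gives the claim.

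The main obstacle is the uniform control of the segment averages $\bar v^{\boldsymbol{t}}_j$ appearing in the cross term. One cannot appeal directly to the consistency results of Proposition~\ref{Prop:Segment} or \ref{Prop:Segment2} since the minimum in $SS_m$ is taken over all $\boldsymbol{t}\in\mathcal{A}_{n,m}$, not only near $\boldsymbol{t}_n^{\star}$; the uniform maximal inequality over all intervals of length at least $\Delta_n$ is what ultimately saves the argument, exploiting the assumption that $\Delta_n$ diverges polynomially.
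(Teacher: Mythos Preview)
Your proof is correct and shares the paper's skeleton—use $w^{\star}=v^{\star}+\Delta^{\star}$ with $\Delta^{\star}$ supported on $m^{\star}$ indices, expand the squared norms, and sandwich at the two minimizers—but the control of the cross term is genuinely different. The paper works only at $\widehat{\boldsymbol t}^y$ and $\widehat{\boldsymbol t}^z$, decomposes $w^{\star}-\widehat T\widehat\delta=\epsilon+T^{\star}(\delta^{\star}-\widehat\delta)+(T^{\star}-\widehat T)\widehat\delta$, and then invokes the rates of Propositions~\ref{Prop:Segment} and~\ref{Prop:Segment2} for $(\widehat T,\widehat\delta)$. That decomposition and those rate results are only available when $m=m^{\star}$, whereas the lemma is stated for arbitrary $m$. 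Your route—bounding $\sup_{\boldsymbol t\in\mathcal A_{n,m}}\max_j|\bar v^{\boldsymbol t}_j|$ directly—avoids the consistency propositions altogether and is therefore self-contained for every $m$, which is what the statement actually requires.

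Two minor corrections. First, the sum in your cross-term display runs over the $m^{\star}$ support points of $\Delta^{\star}$, not $m$. Second, the inequality you name does not quite do the job: with only second moments, Chebyshev plus a union bound over $O(n^2)$ intervals leaves a diverging factor, and the partial-sum trick gives only $O_P(\sqrt n/\Delta_n)$, which need not be bounded when $\alpha<1/2$. What works here—and is legitimate because the mBIC development is carried out under Gaussianity—is the sub-Gaussian tail bound $P(|\bar\epsilon_I|>M)\le 2\exp\bigl(-M^2|I|/(2\sigma^{\star2})\bigr)$; the union bound then yields $n^2\exp(-cM^2\Delta_n)\to0$ for every fixed $M$ since $\Delta_n$ diverges polynomially. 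With this adjustment your uniform bound, and hence the whole argument, goes through.
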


\begin{lemma} \label{Lem:OP1}
  Let  $(X_n)$ and $(Y_n)$ be two sequences of random variables such that $X_n - Y_n = O_P(1)$. If $Y_n ^{-1} = O_P(n^{-1})$ then $X_n ^{-1} = O_P(n^{-1})$.
\end{lemma}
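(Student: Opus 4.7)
The plan is to prove the lemma by the simple algebraic identity
$$X_n^{-1} = \frac{Y_n^{-1}}{1 + (X_n - Y_n)/Y_n},$$
valid whenever $Y_n \neq 0$ and $X_n \neq 0$. The first step is to observe that on the event $\{Y_n \neq 0\}$ (which has probability tending to one, since $Y_n^{-1} = O_P(n^{-1})$ forces $|Y_n| \to \infty$ in probability), one has
$$\frac{X_n - Y_n}{Y_n} = (X_n - Y_n) \cdot Y_n^{-1} = O_P(1) \cdot O_P(n^{-1}) = O_P(n^{-1}),$$
by the product rule for $O_P$ terms. In particular, $(X_n - Y_n)/Y_n = o_P(1)$.

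Next I would apply the continuous mapping theorem to the map $u \mapsto 1/(1+u)$, which is continuous at $u=0$. Since $(X_n - Y_n)/Y_n$ converges to $0$ in probability, it follows that
$$\left(1 + \frac{X_n - Y_n}{Y_n}\right)^{-1} \stackrel{P}{\longrightarrow} 1,$$
so in particular this quantity is $O_P(1)$. Moreover, on the event where this quantity is bounded (say, in $[1/2, 3/2]$), $X_n = Y_n(1 + (X_n-Y_n)/Y_n)$ is automatically nonzero. Hence outside a set of asymptotically negligible probability, $X_n^{-1}$ is well-defined and equals the product displayed above.

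Combining the two estimates,
$$n \, X_n^{-1} = \bigl(n \, Y_n^{-1}\bigr) \cdot \left(1 + \frac{X_n - Y_n}{Y_n}\right)^{-1} = O_P(1) \cdot O_P(1) = O_P(1),$$
which is exactly the statement $X_n^{-1} = O_P(n^{-1})$. The only mildly subtle point is the handling of events where $X_n$ or $Y_n$ vanish, but since both tightness conclusions are invariant under modifications on sets of vanishing probability, this causes no real difficulty; the assumption $Y_n^{-1} = O_P(n^{-1})$ already guarantees that $|Y_n|$ is at least of order $n$ with probability arbitrarily close to one, and the same then propagates to $X_n$ via the factor $1 + o_P(1)$. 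Thus no serious obstacle arises, and the proof is essentially a one-line computation built on the product and continuous-mapping rules for stochastic orders.
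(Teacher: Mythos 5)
Your proof is correct and follows essentially the same route as the paper's: the identity $X_n^{-1}=Y_n^{-1}\left(1+Y_n^{-1}(X_n-Y_n)\right)^{-1}$, the product rule giving $Y_n^{-1}(X_n-Y_n)=O_P(n^{-1})=o_P(1)$, and the convergence in probability of the parenthesized factor to one, hence its $O_P(1)$-ness. Your additional care about the events where $X_n$ or $Y_n$ vanish is a minor refinement the paper leaves implicit, not a different argument.
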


\begin{proof}[Proof of Lemma \ref{Lem:SSmY}]
Using the matrix notations from the proof of Lemma \ref{Lem:Delta_order}, we have
$$
SS_m(y, \rho^{\star}) = \min_{T, \delta} \Vert y - \rho^{\star} By - T \delta \Vert^2, 
\qquad
SS_m(y, \overline{\rho}_n) = \min_{T, \delta} \Vert y - \overline{\rho}_n  By - T \delta \Vert^2,
$$
where all minimizations are achieved over all segmentations with $m$ change points belonging to $\mathcal{A}_{n,m}$. 
Let us define $(\widehat{T}^{\star}, \widehat{\delta}^{\star})$
and $(\overline{T}, \overline{\delta})$ by 
\begin{equation*}
(\widehat{T}^{\star}, \widehat{\delta}^{\star}) = \arg\min_{T, \delta} \Vert y - \rho^{\star}By - T\delta \Vert, 
\qquad
(\overline{T}, \overline{\delta}) = \arg\min_{T, \delta} \Vert y - \overline{\rho}_n By - T\delta \Vert.
\end{equation*}
Note that $\widehat{T}^{\star}$ and $\overline{T}$ refer to $\widehat{t}(y, \rho^{\star})$ and $\widehat{t}(y, \overline{\rho}_n)$, respectively. We have 
\begin{eqnarray} \label{Eq:UpperBoundSSdiff}
 \left| SS_m(y, \overline{\rho}_n) - SS_m(y, \rho^{\star}) \right| 
 & = &  \left| \min_{T, \delta} \Vert y - \overline{\rho}_n By - T\delta \Vert^2 - \min_{T, \delta} \Vert y - \rho^{\star} By - T\delta \Vert^2 \right| \nonumber \\
 & \leq & 
 \max \left( \left| \Vert y - \overline{\rho}_n By - \widehat{T}^{\star}\widehat{\delta}^{\star} \Vert^2 - \Vert y - \rho^{\star}By - \widehat{T}^{\star}\widehat{\delta}^{\star} \Vert^2 \right|, \right. \nonumber\\
  & & \qquad \left. \left| \Vert y - \overline{\rho}_n By - \overline{T}\,\overline{\delta} \Vert^2 - \Vert y - \rho^{\star}By - \overline{T}\,\overline{\delta} \Vert^2 \right| \right).
\end{eqnarray}
We now have to prove that this upper bound is $O_P(1)$. We first prove it for the second term of in the rhs of \eqref{Eq:UpperBoundSSdiff}. To do so, observe that
$\Vert y - \overline{\rho}_n By - \overline{T}\,\overline{\delta} \Vert^2
=\Vert y - \rho^{\star}By - \overline{T}\,\overline{\delta}+(\rho^{\star}-\overline{\rho}_n) By\Vert^2$. Thus,
$$
\Vert y - \overline{\rho}_n By - \overline{T}\overline{\delta} \Vert^2 - \Vert y - \rho^{\star}By - \overline{T}\overline{\delta} \Vert^2 
=(\overline{\rho}_n  - \rho^{\star})^2 \Vert By\Vert^2 +2(\rho^{\star}-\overline{\rho}_n)\langle By,y - \rho^{\star}By - \overline{T}\,\overline{\delta} \rangle.
$$
Since, by (\ref{eq:modele_matriciel}) and Lemma \ref{Lem:YZ}, $y - \rho^{\star}By - \overline{T}\,\overline{\delta}
=\epsilon - \Delta^{\star} + (T^{\star}\delta^{\star}-\overline{T}\,\overline{\delta})=\epsilon- \Delta^{\star}
+T^{\star}(\delta^{\star}-\overline{\delta})+(T^{\star}-\overline{T})\overline{\delta}$, where $\Delta^{\star}$ is the $n$-dimensional vector with entries $\Delta^{\star}_i$, we get
\begin{multline}\label{eq:dec:second:terme}
\Vert y - \overline{\rho}_n By - \overline{T}\,\overline{\delta} \Vert^2 - \Vert y - \rho^{\star}By - \overline{T}\,\overline{\delta} \Vert^2\\
=(\overline{\rho}_n  - \rho^{\star})^2 \Vert By\Vert^2 
+ 2 (\rho^{\star}- \overline{\rho}_n ) \left(\langle By,\epsilon  \rangle + \langle By,T^{\star} (\delta^{\star} - \overline{\delta} )  \rangle 
+ \langle By,(T^{\star}-\overline{T}) \overline{\delta}  \rangle - \langle By, \Delta^{\star}\rangle \right).
\end{multline}
Let us now prove that each term in the rhs of (\ref{eq:dec:second:terme}) is $O_P(1)$.
\begin{enumerate}[($a$)]
\item Let us study the first term of (\ref{eq:dec:second:terme}).
Using Lemma \ref{lem:rateXY} and (\ref{eq:hypRhoRate}) we get that 
  \begin{equation}\label{Eq:rhoXnorm}
  (\overline{\rho}_n -\rho^{\star})^2 \Vert By\Vert^2 = O_P(1).
  \end{equation}
\item Let us now study the second term of (\ref{eq:dec:second:terme}). Observe that 
$
\left\langle By,\epsilon\right\rangle = \sum_{i=1}^n y_{i-1}\epsilon_i=\sum_{i=1}^n (y_{i-1}-\PE(y_{i-1}))\epsilon_i+\sum_{i=1}^n \PE(y_{i-1})\epsilon_i.
$
By using the central limit theoreom for i.i.d. random variables and since there is a finite number of change-points, the second term
is $O_P(\sqrt{n})$. As for the first term, since $(y_{i-1}-\PE(y_{i-1}))$ is a causal AR(1) process, then by using the beginning of the proof of 
\cite[Proposition 8.10.1]{brockwell}, we get
that $\sum_{i=1}^n (y_{i-1}-\PE(y_{i-1}))\epsilon_i=O_P (\sqrt{n})$. Thus,
  \begin{equation} \label{Eq:CrossProd1}  
  \left\langle By,\epsilon\right\rangle 
= O_P (\sqrt{n}).
 \end{equation}
  Furthermore, we have $\Vert T^{\star} ( \delta^{\star} - \overline{\delta} ) \Vert^2 = \sum_{k=0}^{m}\left(t_{k+1}^{\star}-t_k^{\star}\right) (\delta^{\star}_k -\overline{\delta}_k)^2$ 
 where each term of the sum is $O_P(1)$, thanks to Proposition \ref{Prop:Segment2}, and so is the sum. 
Now using Lemma \ref{lem:rateXY} and the Cauchy-Schwarz inequality, we get 
\begin{equation} \label{Eq:CrossProd2}  
  \langle By, T^{\star} ( \delta^{\star} - \overline{\delta} )  \rangle = O_P(\sqrt{n}).
  \end{equation}
  The convergence rate of $\widehat{t}(y,\overline{\rho}_n)$ given in Proposition \ref{Prop:Segment2} ensures that, for any $\varepsilon>0$ there exists a positive $M$ 
such that each column of $(T^{\star} - \overline{T})$ has at most $M$ non-zero coefficients with probability greater than $1-\varepsilon$. 
By using Proposition \ref{Prop:Segment2}, we obtain that with probability greater than $1-\varepsilon$
  \begin{equation}\label{eq:T*-Tbar}
  \Vert(T^{\star} - \overline{T}) \overline{\delta} \Vert^2 \leq M \sum_k \overline{\delta}_k^2 =2 M \sum_k (\overline{\delta}_k-\delta^{\star}_k)^2
+2M \sum_k {\delta^{\star}_k}^2\leq MM',
\end{equation}
where $M'$ is a positive constant. 
By the Cauchy-Schwarz inequality, (\ref{eq:T*-Tbar}) and Lemma \ref{lem:rateXY}, we get 
  \begin{equation} \label{Eq:CrossProd3}  
  \langle By, (T^{\star} - \overline{T}) \overline{\delta}  \rangle = O_P (\sqrt{n}).
  \end{equation}
  As $\Delta^{\star}$ has only $m$ non-zero entries, $\langle By, \Delta^{\star}\rangle$ is the sum of $m$ Gaussian rv's and is therefore $O_P(1)$.
  
  Thus, combining \eqref{Eq:CrossProd1}, \eqref{Eq:CrossProd2} and \eqref{Eq:CrossProd3} with (\ref{eq:hypRhoRate}), we get
  $$
  (\rho^{\star}- \overline{\rho}_n ) \left(\langle By,\epsilon  \rangle + \langle By,T^{\star} (\delta^{\star} - \overline{\delta} )  \rangle + \langle By,(T^{\star} - \overline{T}) \overline{\delta}  \rangle -  \langle By, \Delta^{\star}\rangle \right) = O_P(1).
  $$
\end{enumerate}

To complete the proof, we need to consider the first term of \eqref{Eq:UpperBoundSSdiff}.
As $\rho^{\star}$ satisfies the same assumptions as $\overline{\rho}_n$, using the same line of reasoning as for the second term holds so we get
$$
\Vert y - \overline{\rho}_n By - \widehat{T}^{\star}\widehat{\delta}^{\star} \Vert^2 - \Vert y - \rho^{\star}By - \widehat{T}^{\star}\widehat{\delta}^{\star} \Vert^2 = O_P(1).
$$
\end{proof}

\begin{proof}[Proof of Lemma \ref{Lem:SmYZ}] 
The proof follows the same line of reasoning as the proof of Lemma \ref{Lem:SSmY}.
Let us define $(\widehat{T}^y, \widehat{\delta}^y)$
and $(\widehat{T}^z, \widehat{\delta}^z)$ by 
\begin{equation*}
(\widehat{T}^y, \widehat{\delta}^y)
= \arg\min_{T, \delta} 
\Vert y - \rho^{\star} By - T \delta \Vert^2,
\qquad
(\widehat{T}^z, \widehat{\delta}^z)
= \arg\min_{T, \delta} 
\Vert z - \rho^{\star} Bz - T \delta \Vert^2.
\end{equation*}
We have 
\begin{multline*} 
\left|SS_m(y, \rho^{\star}) - SS_m(z, \rho^{\star}) \right\| 
 \leq 
 \max \left( \left| \Vert y - \rho^{\star} By - \widehat{T}^y \widehat{\delta}^y \Vert^2 - \Vert z - \rho^{\star}Bz - \widehat{T}^y \widehat{\delta}^y \Vert^2 \right|, \right. \\
 \left. \left| \Vert y - \rho^{\star} By - \widehat{T}^z \widehat{\delta}^z \Vert^2 - \Vert z - \rho^{\star}Bz - \widehat{T}^z \widehat{\delta}^z \Vert^2 \right| \right).
\end{multline*}
According to Lemma \ref{Lem:YZ}, we have $y - \rho^{\star} By = z - \rho^{\star} Bz - \Delta^{\star}$ where $\Delta^{\star}=(\Delta_i^{\star})$. As for the first term
\begin{multline*}
\Vert y - \rho^{\star} By - \widehat{T}^y \widehat{\delta}^y \Vert^2 - \Vert z - \rho^{\star}Bz - \widehat{T}^y \widehat{\delta}^y \Vert^2 \\
= \Vert \Delta^{\star} \Vert^2 
- 2 \left(\langle \Delta^{\star},\epsilon \rangle + \langle \Delta^{\star}, T^{\star} (\delta^{\star} - \widehat{\delta}^y )  \rangle 
+ \langle \Delta^{\star},(T^{\star}-\widehat{T}^y) \widehat{\delta}^y \rangle \right),
\end{multline*}
the first term of which is a constant and all other terms being $O_P(1)$, which can be proved following the same line as the proof of Lemma \ref{Lem:SSmY}. The control of $\Vert y - \rho^{\star} By - \widehat{T}^z \widehat{\delta}^z \Vert^2 - \Vert z - \rho^{\star}Bz - \widehat{T}^z \widehat{\delta}^z \Vert^2$ follows the same lines.
  \end{proof}

\begin{proof}[Proof of Lemma \ref{Lem:OP1}]
Observe that 
$$X_n^{-1}=\left(Y_n+(X_n-Y_n)\right)^{-1}=Y_n^{-1}\left(1+Y_n^{-1}(X_n-Y_n)\right)^{-1}\; .$$
Since, by assumption, $Y_n^{-1}(X_n-Y_n)=O_P(n^{-1})$, the terms inside
the parentheses converges in probability to one. Thus, $\left(1+Y_n^{-1}(X_n-Y_n)\right)^{-1}$ is in particular $O_P(1)$ which concludes the proof.
\end{proof}

\begin{proof}[Proof of Proposition \ref{Prop:mBIC}]
   As for the proof of Proposition \ref{Prop:mBICBardet}, denoting $\widehat{\tau}_k(y, \rho) = \widehat{t}_k(y, \rho) /n$, the decomposition \eqref{Eq:DecompNkZ} still holds, replacing $z$ with $y$. Then, by proposition \ref{Prop:Segment2}, we have
   $$
   \sum_{k=0}^m \log n_k(\widehat{t}(y, \overline{\rho}_n))- \sum_{k=0}^m \log
   n_k(\widehat{t}(y, {\rho}^{\star}))=O_P(1). 
   $$
   For a process $y$ under model \eqref{eq:modele_new}, we construct a process $z$ under model \eqref{eq:bkw} using Lemma \ref{Lem:YZ}. The proof relies on the fact that $y$ inherits some properties of $z$. Again, we note that
   $$
   \log {SS}_m(y, \overline{\rho}_n) - \log {SS}_m(y, {\rho}^{\star})
    = R\left(\frac{{SS}_m(y, \overline{\rho}_n)-{SS}_m(y, {\rho}^{\star})}{{SS}_m(y, {\rho}^{\star})}\right).
   $$
  Lemma \ref{Lem:SSmY} states that ${SS}_m(y, \overline{\rho}_n)-{SS}_m(y, {\rho}^{\star}) = O_P(1)$. 
  To conclude the proof we need to further show that $[{SS}_m(y, {\rho}^{\star})]^{-1} = O_P(n^{-1})$. We first show that $[{SS}_m(y, {\rho}^{\star}) - {SS}_m(z, {\rho}^{\star})] = O_P(1)$ in Lemma \ref{Lem:SmYZ} and, because $[{SS}_m(z, {\rho}^{\star})]^{-1} = O_P(n^{-1})$, we conclude using Lemma \ref{Lem:OP1}.
\end{proof}



\end{document}